\pgfplotsset{compat=1.15}
\setlist[enumerate]{leftmargin=7mm,topsep=0pt,itemsep=-1ex,partopsep=1ex,parsep=1ex,label=\rm{(\roman*)}}
\setlist[itemize]{leftmargin=5mm,topsep=0pt,itemsep=-1ex,partopsep=1ex,parsep=1ex,label=\raisebox{0.25ex}{\tiny$\bullet$}}
\theoremstyle{plain}
\newtheorem{theorem}{Theorem}[section]
\newtheorem*{theoremaux}{Theorem \theoremauxnum}
\gdef\theoremauxnum{1}
\newtheorem*{main-theorem}{Main Theorem}
\newtheorem{proposition}[theorem]{Proposition}
\newtheorem*{propositionaux}{Proposition \propositionauxnum}
\gdef\propositionauxnum{1}
\newtheorem{lemma}[theorem]{Lemma}
\newtheorem*{lemmaaux}{Lemma \lemmaauxnum}
\gdef\lemmaauxnum{1}
\newtheorem{corollary}[theorem]{Corollary}
\newtheorem*{key-problem}{Key Problem}
\theoremstyle{definition}
\newtheorem{example}[theorem]{Example}
\theoremstyle{remark}
\newtheorem{remark}[theorem]{Remark}
\newcommand{\sslash}{\mathbin{\mkern-5mu/\mkern-6mu/\mkern-3mu}}
\newcommand{\incl}[1][r]{\ar@<-0.2pc>@{^(-}[#1] \ar@<+0.2pc>@{-}[#1]}
\newcommand{\hs}{\kern 0.8pt}
\renewcommand{\H}{{\mathrm{H}}}
\renewcommand{\P}{\mathbb{P}}
\newcommand{\Hom}{\mathrm{Hom}}
\renewcommand{\div}{\mathrm{div}}
\newcommand{\Spec}{\mathrm{Spec}}
\newcommand{\Sone}{\mathbb{S}^1}
\newcommand{\Q}{\mathbb{Q}}
\renewcommand{\k}{\mathrm{k}}
\renewcommand{\H}{\mathrm{H}}
\newcommand{\K}{\mathrm{K}}
\newcommand{\bk}{\overline{\mathrm{k}}}
\newcommand{\A}{\mathbb{A}}
\newcommand{\D}{\mathbb{D}}
\newcommand{\C}{\mathbb{C}}
\newcommand{\Z}{\mathbb{Z}}
\renewcommand{\K}{\mathrm{K}}
\newcommand{\R}{\mathbb{R}}
\newcommand{\G}{\mathbb{G}}
\newcommand{\Gm}{\mathbb{G}_\mathrm{m}}
\newcommand{\Ga}{\mathbb{G}_\mathrm{a}}
\renewcommand{\O}{\mathcal{O}}
\DeclareMathOperator{\SL}{SL}
\DeclareMathOperator{\Aut}{Aut}
\DeclareMathOperator{\Cl}{Cl}
\DeclareMathOperator{\Pic}{Pic}
\DeclareMathOperator{\Gal}{Gal}
\DeclareMathOperator{\GL}{GL}
\title[Equivariant automorphism group and real forms]{Equivariant automorphism group and real forms of complexity-one varieties}
\date{Version of \today}
\author{Giancarlo Lucchini Arteche}
\address{Departamento de Matem\'aticas, Facultad de Ciencias, Universidad de Chile, Las Palmeras 3425, \~Nu\~noa, Santiago, Chile.}
\email{luco@uchile.cl}
\author{Ronan Terpereau}
\address{Univ. Lille, CNRS, UMR 8524 - Laboratoire Paul Painlev\'e, F-59000 Lille, France}
\email{ronan.terpereau@univ-lille.fr}
\begin{document}

\begin{abstract}
Let $G$ be a connected reductive algebraic group over a perfect field. We study the representability of the equivariant automorphism group of $G$-varieties. For a broad class of complexity-one $G$-varieties, we show that this group is representable by a group scheme locally of finite type when the base field has characteristic zero. We also establish representability by a linear algebraic group in the case of almost homogeneous $G$-varieties of arbitrary complexity. Finally, using an exact sequence description of the equivariant automorphism group, we deduce that complexity-one $G$-varieties with representable equivariant automorphism group admit only finitely many real forms.
\end{abstract}

\subjclass[2020]{Primary 14L30, 14J50, 14M17, 14P99; Secondary 14L15, 14D06, 14F06, 14G27}

\keywords{Equivariant automorphism groups, almost homogeneous varieties, torsors, complexity-one varieties, representable group sheaves, real forms}

\maketitle

\tableofcontents

\section{Introduction}  
Let $X$ be a real algebraic variety, and let $X_\C := X \times_{\Spec(\R)} \Spec(\C)$ denote its complexification. A \emph{real form}, or $\R$-form, of $X$ is a real algebraic variety $X'$ such that $X'_\C \simeq X_\C$ as complex algebraic varieties.

In this article, we focus on real algebraic varieties equipped with an action of a reductive group. Accordingly, we consider only real forms whose symmetries are compatible with the reductive group action. More precisely, given a (connected) real reductive group $G$ and a real $G$-variety $X$, we study the \emph{$(\R, G)$-forms} of $X$, that is, the real $G$-varieties $X'$ such that $X'_\C \simeq X_\C$ as complex $G_\C$-varieties.

Recall that the \emph{complexity} of a $G$-variety $X$ is defined as the codimension of a general $B$-orbit of $X_\C$, where $B$ is any Borel subgroup of $G_\C$. Note that normal complexity-zero varieties are also known as \emph{spherical varieties} (see \S~\ref{sec: spherical case} for references).

Since there exist smooth algebraic surfaces with infinitely many pairwise non-isomorphic real forms (see e.g.~\cite{DO19,DOY23,Bot24,TGLOWY}), it is clear that for every $c \geq 2$, there exist smooth $G$-varieties of complexity $c$ with infinitely many pairwise non-isomorphic $(\mathbb{R}, G)$-forms. On the other hand, it is well known by experts that a spherical variety always admits a finite number of $(\mathbb{R}, G)$-forms (see again \S~\ref{sec: spherical case}). In this article, we investigate the still open case of complexity-one $G$-varieties.

\smallskip

This question being closely related to the structure of the group of equivariant automorphisms (see \S~\ref{sec: real forms}), we begin by studying $\Aut^G(X)$ for $X$ a $G$-variety over a perfect field $\k$.
More precisely, we investigate  when the group sheaf $\underline{\Aut}^G(X)$, defined (over the small smooth site over $\k$) by
\begin{equation} \label{eq: def of the main player}
\underline{\Aut}^G(X)\colon\ \left\{\text{smooth } \k\text{-schemes}\right\} \longrightarrow \left\{\text{groups}\right\}, \quad S \mapsto \Aut_S^{G_S}(X_S),
\end{equation}
is representable by a (smooth) $\k$-group scheme.

It is clear, as a consequence of the work of Matsumura and Oort~\cite{MO67}, that this group sheaf is representable by a $\k$-group scheme locally of finite type when the $G$-variety $X$ is complete. 
Here, we establish the representability of $\underline{\Aut}^G(X)$ for a significantly broader class of complexity-one $G$-varieties, as well as for almost homogeneous $G$-varieties of arbitrary complexity. Recall that a $G$-variety $X$ is said to be \emph{almost homogeneous} if $G$ acts on $X$ with a dense open orbit (see \S~\ref{sec: spherical case} for a more precise definition).
We now state our main results in this direction.

\begin{theorem}[Theorem~\ref{th: aut of an almost homogeneous variety}]\label{main th A}
Let $\k$ be a perfect field. Let $G$ be a connected reductive $\k$-group, and let $X$ be an almost homogeneous $G$-variety. Then the group sheaf $\underline{\Aut}^G(X)$ is representable by a smooth linear $\k$-group, denoted $\Aut^G(X)$.
\end{theorem}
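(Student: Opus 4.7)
The plan is to realize $\underline{\Aut}^G(X)$ as a closed subgroup scheme of the equivariant automorphism group of the unique dense open $G$-orbit $U \subseteq X$, which I first show to be a linear algebraic $\k$-group.

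Since $U$ is a $G$-homogeneous space over $\k$, base-changing to $\bk$ yields $U_{\bk} \simeq G_{\bk}/H$ for some subgroup scheme $H \leq G_{\bk}$, and the classical identification $\Aut^{G_{\bk}}(G_{\bk}/H) \simeq N_{G_{\bk}}(H)/H$ exhibits $\underline{\Aut}^{G_{\bk}}(U_{\bk})$ as a linear algebraic $\bk$-group. Galois descent, valid because $\k$ is perfect, then produces a linear algebraic $\k$-group $N$ representing $\underline{\Aut}^G(U)$.

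Restriction to $U$ defines a morphism of sheaves $\rho\colon \underline{\Aut}^G(X) \to N$, and since $U$ is schematically dense in $X$ with $X$ separated, $\rho$ is a monomorphism: two equivariant automorphisms of $X$ agreeing on $U$ agree globally. It therefore suffices to identify the image of $\rho$ with a closed subgroup scheme of $N$. Note that $N$ acts on $U$ and admits a rational action on $X$; for a smooth $\k$-scheme $S$, an element $n \in N(S)$ lies in the image of $\rho$ exactly when the $G_S$-equivariant rational self-map of $X_S$ induced by $n$ is everywhere defined, since density of $U_S$ and separatedness of $X_S$ then force the morphism given by $n^{-1}$ to be a two-sided inverse.

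The main step is to represent this extension locus by a closed subscheme of $N$. I would proceed by choosing a $G$-equivariant compactification $\bar X$ of $X$ (available via Sumihiro's theorem, after normalization if needed). By the equivariant Matsumura--Oort result, $\underline{\Aut}^G(\bar X)$ is representable by a $\k$-group scheme locally of finite type, and the stabilizer of the closed complement $\bar X \setminus X$ is a closed subgroup scheme mapping to $\underline{\Aut}^G(X)$ by restriction. The key and most delicate point, which I expect to be the main obstacle, is to make this restriction map bijective, i.e.\ to ensure that every equivariant automorphism of $X$ extends to some well-chosen compactification. I would handle this by passing to a $G \times N$-equivariant compactification of $X$ --- obtained by applying Sumihiro-type results to the combined rational action of $G \times N$ on $X$ --- on which the whole group $N$ acts regularly; every element in the image of $\rho$ then automatically extends to this $\bar X$, and the scheme structure inherited from the stabilizer in $\underline{\Aut}^G(\bar X)$ is the desired one.

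Finally, as a closed subgroup scheme of the linear $\k$-group $N$, $\underline{\Aut}^G(X)$ is itself linear; smoothness over the perfect field $\k$ follows from the smoothness of $N$ combined with standard arguments on group functors.
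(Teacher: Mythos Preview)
Your overall strategy matches the paper's exactly: reduce by Galois descent, identify $\underline{\Aut}^G(U)$ with (the reduction of) $N_G(H)/H$, and then realize $\underline{\Aut}^G(X)$ as the stabilizer of a closed complement inside an ambient scheme on which $N:=\Aut^G(U)$ acts regularly. The paper's Proposition~2.2 is precisely your ``stabilizer of a closed subset is a closed subgroup'' step.

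The gap is at the point you yourself flag as delicate. You propose to obtain the ambient scheme as a $G\times N$-equivariant compactification of $X$ via Sumihiro-type results. But $N$ does \emph{not} act regularly on $X$; it acts regularly only on $U$ and merely rationally on $X$. Sumihiro's theorem requires a regular action as input, so you cannot apply it to the $G\times N$-action on $X$. If instead you compactify $U$ equivariantly for $G\times N$, there is no reason the resulting $\bar U$ contains $X$ as an open subset: different $G$-embeddings of $U$ need not be dominated by a single one, and the $N$-action typically permutes them. This is exactly the obstruction that prevents a naive compactification from working.

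The paper resolves this by using Timashev's \emph{universal model} $\mathbb{X}_G$ of $U$, the (generally non-separated, non-Noetherian) scheme obtained by gluing all $G$-embeddings of $U$ along $U$. By construction $N=\Aut^G(U)$ acts on $\mathbb{X}_G$, and $X$ sits inside it as a $G$-stable open. One then replaces $\mathbb{X}_G$ by the finite-type open subset $\mathbb{X}_G':=N\cdot X$, and applies the stabilizer argument to the closed complement $\mathbb{X}_G'\setminus X$. In effect, the universal model is the correct substitute for the $G\times N$-equivariant compactification you were looking for; your argument becomes complete once you replace the Sumihiro step by this construction.
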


Note that this result holds independently of the complexity of the $G$-variety. An immediate consequence is the following result (see \S~\ref{sec: real forms} for details).

\begin{corollary}[Corollary~\ref{cor: forms for almost homog varieties}]
Let $\k$ be a field of type $(F)$. Let $G$ be a connected reductive $\k$-group and let $X$ be an almost homogeneous $G$-variety. Then $X$ admits a finite number of $(\k, G)$-forms.
\end{corollary}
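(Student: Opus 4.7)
The plan is to identify the set of $(\k,G)$-forms of $X$ (up to $G$-equivariant $\k$-isomorphism) with a first Galois cohomology set attached to the equivariant automorphism group, and then invoke the hypothesis that $\k$ is of type $(F)$ to conclude finiteness.

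First I would recall (presumably the content of \S\ref{sec: real forms}) the general descent principle: whenever the group sheaf $\underline{\Aut}^G(X)$ is representable by a smooth $\k$-group scheme $\Aut^G(X)$, the $(\k,G)$-forms of $X$ are in natural bijection with the pointed cohomology set
\[
H^1_{\mathrm{\acute et}}\bigl(\k,\,\Aut^G(X)\bigr) \;\simeq\; H^1\bigl(\Gal(\bar\k/\k),\,\Aut^{G_{\bar\k}}(X_{\bar\k})\bigr),
\]
the isomorphism on the right being the usual translation between étale and Galois cohomology for smooth $\k$-groups over a perfect field. Concretely, to a form $X'$ one associates the class of the $\underline{\Aut}^G(X)$-torsor $\underline{\mathrm{Isom}}^G(X,X')$.

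Next, Theorem~\ref{main th A} is exactly what is needed to guarantee that the right-hand side makes sense as a cohomology set of a \emph{smooth linear} algebraic $\k$-group: it provides the representability of $\underline{\Aut}^G(X)$ by such a group $\Aut^G(X)$, and smoothness is what licences the passage to Galois cohomology of $\bar\k$-points. (If $\Aut^G(X)$ were only locally of finite type, one would have to be more careful, since the set of connected components could a priori be infinite; in the almost homogeneous setting this issue does not arise.)

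Finally, by Serre's definition, a field $\k$ is of type $(F)$ precisely when it is perfect and $H^1(\k,H)$ is finite for every linear algebraic $\k$-group $H$ (see \cite[Ch.~III, \S4]{SerreCG} in the usual reference). Applying this to $H=\Aut^G(X)$, which is linear by Theorem~\ref{main th A}, yields that $H^1(\k,\Aut^G(X))$ is finite, hence so is the set of $(\k,G)$-forms of $X$. I do not foresee a genuine obstacle here: the only non-trivial ingredient is Theorem~\ref{main th A}, and the rest is the standard ``automorphism sheaf representable $\Longrightarrow$ forms are torsors $\Longrightarrow$ Galois cohomology is finite over type $(F)$ fields'' chain.
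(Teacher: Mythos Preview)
Your proposal is correct and follows essentially the same route as the paper: identify the $(\k,G)$-forms with $\H^1(\k,\Aut^G(X))$, invoke Theorem~\ref{main th A} to get that $\Aut^G(X)$ is a smooth linear $\k$-group, and then use Serre's finiteness result over type $(F)$ fields. One small correction: the finiteness of $\H^1(\k,H)$ for linear $H$ is a \emph{theorem} of Serre (\cite[III, \S4.3, Th.~4]{Ser02}), not the definition of type $(F)$; the definition is the one recalled in the paper (perfect, with finitely many subextensions of each given degree).
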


Now, a $G$-variety of complexity one is either almost homogeneous or possesses spherical orbits of codimension one. We henceforth assume that the $G$-variety $X$ falls into the latter case. Using a theorem of Rosenlicht, we observe in \S~\ref{sec: repres of AutYG(X)} that there exists a canonical $G$-stable, smooth, dense open subset $V\subset X$, and a $G$-invariant surjective morphism $\theta:V\to C=V/G$, where $C$ is a smooth curve, inducing an exact sequence of group sheaves (see \S~\ref{sec: repres of AutYG(X)} for details):
\begin{equation} \label{eq: key exact sequence}
1 \to \underline{\Aut}^G_C(X) \to \underline{\Aut}^G(X) \to \Aut(C).
\end{equation}

To address the representability of $\underline{\Aut}^G(X)$, we first study the representability of its image in $\underline{\Aut}(C)$, as well as that of the subgroup sheaf $\underline{\Aut}_C^G(X)$. A preliminary result (Lemma~\ref{lem: lemme FILA representabilite des extensions}) then will allow us to deduce the representability of $\underline{\Aut}^G(X)$.

\smallskip

In the particular case where $G = T$ is a torus, this approach allows us to establish the representability of $\underline{\Aut}^T(X)$ for any normal affine $T$-variety.

\begin{theorem}[Propositions~\ref{prop: affine T-varietes, Aut_C^T} and \ref{prop: affine T-varieties, representability}]\label{main th B}
Let $\k$ be a perfect field. Let $T$ be a $\k$-torus, and let $X$ be a normal affine $T$-variety of complexity one.

Then the group sheaf $\underline{\Aut}^T(X)$ is representable by a smooth $\k$-group scheme $\Aut^T(X)$ that is locally of finite type. Moreover, there is an exact sequence of $\k$-group schemes:
\[
1 \to \Aut^T_C(X) \to \Aut^T(X) \to \Aut(C),
\]
and the smooth abelian $\k$-group scheme $\Aut^T_C(X)$ fits into a short exact sequence:
\[
1 \to T \to \Aut^T_C(X) \to \Lambda \to 1,
\]
where $\Lambda$ is étale-locally isomorphic to $\Z^m$ for some $m \geq 0$.
\end{theorem}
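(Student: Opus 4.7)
The plan is to analyse the two terms of the exact sequence~\eqref{eq: key exact sequence} (specialised to $G = T$) separately and reassemble them via Lemma~\ref{lem: lemme FILA representabilite des extensions}. Since $X$ is affine and $T$ is linearly reductive, after base change to $\bk$ the coordinate ring carries an $M$-weight decomposition $\k[X] = \bigoplus_{m \in M} \k[X]_m$ (with $M$ the character lattice of $T$), and every $T$-equivariant automorphism is automatically graded. An automorphism over $C$ acts trivially on the zero-weight part $\k[X]_0 \simeq \k[C]$ (up to the birational adjustment between $\Spec \k[X]^T$ and $C$).

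\smallskip

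\textbf{Step 1: structure of $\underline{\Aut}^T_C(X)$.} The complexity-one hypothesis in the non-almost-homogeneous setting means that the generic fibre $V_\eta$ of $\theta$ is a torsor under $T_{k(C)}$ (assuming the $T$-action is effective, otherwise after dividing out the generic stabiliser). Restriction to $V_\eta$ embeds $\underline{\Aut}^T_C(X)$ into $T_{k(C)}$, identifying it with the subsheaf of those $\psi$ whose translation action extends across $X$. On each weight space $\k[X]_m \subseteq \chi^m \cdot k(C)$ (viewed inside its generic trivialisation), the extension condition reads: multiplication by $\chi^m(\psi) \in k(C)^*$ preserves $\k[X]_m$. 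Using the Altmann--Hausen description of affine $T$-varieties of complexity one (which presents $\k[X]_m$, étale-locally on $C$, as the global sections of a line bundle cut out by a polyhedral divisor), this condition amounts to $\chi^m(\psi) \in \O^*(C)$ for every weight $m$ appearing in the grading. Over $\bk$ we thus obtain
\begin{equation*}
\Aut^T_C(X)(\bk) \subseteq \Hom(M, \O^*(C)),
\end{equation*}
with the constants providing the inclusion $T \hookrightarrow \Aut^T_C(X)$. By a theorem of Rosenlicht, $\O^*(C)/\bk^*$ is free abelian of finite rank, so $\Lambda := \Aut^T_C(X)(\bk)/T(\bk)$ embeds into $\Hom(M, \O^*(C)/\bk^*)$ and is consequently free of some finite rank $m$. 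Galois descent equips $\Lambda$ with a continuous $\Gal(\bk/\k)$-action, producing an étale $\k$-group scheme that is étale-locally $\Z^m$, and yielding the extension
\begin{equation*}
1 \to T \to \Aut^T_C(X) \to \Lambda \to 1
\end{equation*}
representable as a smooth abelian $\k$-group scheme.

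\smallskip

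\textbf{Step 2: representability of $\underline{\Aut}^T(X)$.} Since $C$ is a smooth curve, $\Aut(C)$ is representable by a smooth $\k$-group scheme locally of finite type. Together with the representability of $\underline{\Aut}^T_C(X)$ established in Step~1, Lemma~\ref{lem: lemme FILA representabilite des extensions} applied to~\eqref{eq: key exact sequence} then yields representability of $\underline{\Aut}^T(X)$ by a smooth $\k$-group scheme locally of finite type, together with the announced exact sequence $1 \to \Aut^T_C(X) \to \Aut^T(X) \to \Aut(C)$.

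\smallskip

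\textbf{Main obstacle.} The technical heart is Step~1: giving an explicit divisorial description of the image of $\underline{\Aut}^T_C(X)$ inside $T_{k(C)}$, and ensuring that the resulting picture is functorial in $S$-points and Galois-equivariant so that the descent to a representable étale $\k$-group scheme $\Lambda$ goes through. This requires adapting the Altmann--Hausen combinatorial description to families over a smooth base $S$, as well as a careful tracking of the compatibility of the Galois action with the extension structure.
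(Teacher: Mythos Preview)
Your Step~1 is essentially the paper's argument: both identify $\Aut^T_C(X)(\bk)$ with $T(C)$ via the Altmann--Hausen description (in the paper's language, $\ker(\div)\simeq T(C)$), then invoke Rosenlicht to get $\Lambda$ free of finite rank, and finish by an extension argument. The paper is slightly more careful in that it first proves the functorial identity $\underline{\Aut}^T_C(X)(S)\simeq T(S_C)$ by constructing an explicit morphism $f\mapsto (x\mapsto f_S(\pi_S(x))\cdot x)$ and then appeals to its Lemma~\ref{lem: des foncteurs en groupes aux corps}, whereas you only describe $\bk$-points and assert Galois descent; but this is a matter of bookkeeping rather than a real gap.

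Step~2, however, has a genuine hole. Lemma~\ref{lem: lemme FILA representabilite des extensions} applies to an \emph{exact} sequence $1\to H\to\mathcal E\to G\to 1$ with $G$ a smooth $\k$-group scheme. The sequence~\eqref{eq: key exact sequence} is not right-exact: the image of $\underline{\Aut}^T(X)$ in $\Aut(C)$ is a priori only a subgroup \emph{sheaf}, and you have not shown it is representable by a closed subgroup scheme. This is exactly the content of the paper's Proposition~\ref{prop: affine T-varieties, representability}, and it is not automatic. The paper uses Proposition~\ref{prop: cor AH et Gary automorphismes} to characterise which $\psi\in\Aut(C)$ lift (namely those with $\psi^*(\mathfrak D)=\mathfrak D+\div(\mathfrak f)$ for some plurifunction $\mathfrak f$), and then runs a genus-by-genus analysis of $\overline{C}$: for genus $\geq 2$ finiteness of $\Aut(C)$ suffices; for genus $0$ one checks that every $\psi$ fixing the non-translate locus $F$ pointwise lifts, because the relevant divisors are automatically principal on $\P^1$; for genus $1$ the lifting condition becomes a nontrivial linear-equivalence constraint on the elliptic curve, satisfied only for finitely many translations. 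Without this analysis (or an equivalent), you cannot invoke the extension lemma, and the representability of $\underline{\Aut}^T(X)$ is unproved.
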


We also establish a similar result for $T$-torsors over an arbitrary smooth curve (see Proposition~\ref{prop: Aut^T pour des torseurs}). Moreover, the description of the subgroup $\Aut^T_C(X)$ extends to $T$-torsors over much more general bases, in arbitrary dimension (see Proposition~\ref{prop: Aut^T_Y pour des torseurs}).

\smallskip

In the more general case of reductive group actions, we must assume that the base field has characteristic $0$. 
In this setting, we are able to settle the representability of the left-hand side of the exact sequence \eqref{eq: key exact sequence}.

\begin{proposition}[Proposition~\ref{prop: representability of AutYG(X)}]\label{main prop C}
Let $\k$ be a field of characteristic $0$. Let $G$ be a connected reductive $\k$-group, and let $X$ be a $G$-variety of complexity one with spherical orbits. 

Then the group sheaf $\underline{\Aut}_C^G(X)$ is representable by an abelian $\k$-group scheme $\Aut_C^G(X)$, locally of finite type, and fits into the exact sequence
\[1 \to \D \to \Aut_C^G(X) \to \Lambda \to 1,\]
where $\D$ is of multiplicative type and satisfies $\D_{\bk} \simeq \Aut^{G_{\bk}}(Z)$, with $Z$ the closure of a general $G_{\bk}$-orbit in $X_{\bk}$, and $\Lambda$ is étale-locally isomorphic to $\mathbb{Z}^n$ for some $n \geq 0$.
\end{proposition}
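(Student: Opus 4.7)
The plan is to construct the exact sequence
\[1 \to \D \to \underline{\Aut}_C^G(X) \to \Lambda \to 1\]
of group sheaves by hand, with $\D$ of multiplicative type and $\Lambda$ étale-locally a free abelian group of finite rank, and then to invoke Lemma~\ref{lem: lemme FILA representabilite des extensions} to conclude that the middle term is representable by a $\k$-group scheme locally of finite type.

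First I would construct $\D$ together with its action on $X$. Working over $\bk$, the closure $Z\subset X_{\bk}$ of a general orbit is a spherical $G_{\bk}$-variety, so $\Aut^{G_{\bk}}(Z)$ is a diagonalizable $\bk$-group — a classical feature of spherical varieties. The generic fiber of $\theta_{\bk}\colon V_{\bk} \to C_{\bk}$ is birationally a copy of $Z$, and this diagonalizable group acts on it canonically. Because all the data is defined over $\k$, this action is Galois-equivariant and descends to an action of a $\k$-group scheme $\D$ of multiplicative type on the generic fiber of $\theta\colon V \to C$ (viewed over $\k(C)$). Spreading the action out along $C$, and then extending it across the boundary $X\setminus V$ — using that $\D$ commutes with $G$ and that every point of $X$ lies in the closure of some $G$-orbit meeting $V$ — yields a global action of $\D$ on $X$ that is trivial on $C$, hence a monomorphism of sheaves $\D \hookrightarrow \underline{\Aut}^G_C(X)$.

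Next I would identify the cokernel. For a smooth $\k$-scheme $S$ and $\varphi \in \underline{\Aut}^G_C(X)(S)$, restriction to the generic fiber of $\theta_S$ produces an element of $\D(S_{\k(C)})$; the obstruction to this element coming from $\D(S)$ is controlled by the behavior of $\varphi$ across the finitely many points of $C$ where $\theta$ degenerates. Following Timashev's combinatorial description of complexity-one $G$-varieties, this defect is encoded in an integral assignment to the irreducible components of the degenerate $G$-orbits, subject to linear compatibility conditions. This produces an étale subsheaf $\Lambda$ of a constant sheaf and yields the desired exact sequence on the smooth site. Choosing an étale cover of $\Spec(\k)$ that splits the Galois action on the (finite) degeneration locus trivialises $\Lambda$, so $\Lambda$ is étale-locally isomorphic to $\Z^n$ for some $n\ge 0$.

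Finally, Lemma~\ref{lem: lemme FILA representabilite des extensions} applied to the resulting exact sequence produces the representability of $\underline{\Aut}_C^G(X)$ by a locally finite-type $\k$-group scheme; commutativity follows because restriction to a single general orbit closure embeds the group into the abelian $\bk$-group $\Aut^{G_{\bk}}(Z)$ after base change, and this restriction is injective by density of a general orbit in $X$. The main obstacle I anticipate is the cokernel identification in the previous step: explicitly pinning down $\Lambda$ and establishing exactness of the sheaf sequence in the étale topology requires a careful analysis of how $G$-equivariant automorphisms permute the components of the degenerate $G$-orbits, and this is the place where the classification theory of complexity-one embeddings enters crucially.
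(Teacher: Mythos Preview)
Your outline has the right overall shape (build the short exact sequence and apply Lemma~\ref{lem: lemme FILA representabilite des extensions}), but the paper's route is structurally different and avoids the two places where your argument is incomplete. Instead of constructing a $\D$-action on $X$ and then trying to identify the cokernel combinatorially, the paper shrinks $V$ so that all fibers of $V\to Y$ are isomorphic to a fixed $G/H$, and then invokes \cite[Th.~2.13]{CKPR11} to trivialise $V$ after an \'etale Galois cover $Y'\to Y$: one gets $V'\simeq G/H\times Y'$ and hence $\underline{\Aut}^G_{Y'}(V')\simeq\underline{\Hom}(Y',K)$ with $K=N_G(H)/H$ diagonalizable. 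Rosenlicht's lemma on units then gives the exact sequence $1\to K\to\Aut^G_{Y'}(V')\to\Lambda'\to 1$ with $\Lambda'$ free of finite rank, and $\underline{\Aut}^G_Y(X)$ is simply embedded as a subsheaf of this representable abelian group scheme. The group $\D$ appears a posteriori as the intersection $\Aut^G_Y(X)\cap K$, and $\Lambda$ as a subgroup of $\Lambda'$; both inherit the desired properties for free.

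By contrast, your spreading-out of the $\D$-action to all of $X$ is not justified: you need to explain why $\Aut^{G_{\bk}}(Z)$ for a single orbit closure $Z$ extends compatibly over the whole family and across $X\setminus V$ (the paper cites \cite[Cor.~11.10]{Gan18} even for the identification $\D=\Aut^G(Z)$ inside $K$). Your cokernel step is, as you acknowledge, only a sketch. And your argument for commutativity is wrong as written: restriction to the closure of a single general orbit is \emph{not} injective, since a $C$-automorphism can be trivial on one closed fiber while nontrivial on another; density of one orbit in $X$ is false. (If you meant the \emph{generic} fiber over $\k(C)$, that restriction is injective and lands in a diagonalizable group, and the argument would go through --- but that is essentially the paper's embedding into $\underline{\Hom}(Y',K)$ in disguise.) The \'etale trivialisation lemma is the missing ingredient that makes all three issues disappear at once.
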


The previous result is actually valid in arbitrary complexity; see \S~\ref{sec: repres of AutYG(X)} for details.

\smallskip

Finally, our main result concerning the representability of the full group sheaf $\underline{\Aut}^G(X)$ for (non-almost homogeneous) complexity-one normal $G$-varieties is as follows.

\begin{theorem}\label{main th D}
Let $\k$ be a field of characteristic $0$. Let $G$ be a connected reductive $\k$-group, and let $X$ be a $G$-variety of complexity one with spherical orbits. 

Then the group sheaf $\underline{\Aut}^G(X)$ is representable by a $\k$-group scheme $\Aut^G(X)$, locally of finite type, in the following cases:
    \begin{enumerate}[$(a)$]
        \item when the smooth compactification of the curve $C$ appearing in~\eqref{eq: key exact sequence} has genus at least $2$ $($see Corollary~\ref{cor: representability for Y of general type}$)$;
        \item when $X$ is normal and quasi-affine $($see Proposition~\ref{prop: AutG(X) is a group scheme} and Lemma~\ref{lem: AutG(X) representable quasi-affine}$)$;
        \item when $X$ is a Mori dream space $($see Proposition~\ref{prop: representatbility in the almost general case}$)$.
    \end{enumerate}
\end{theorem}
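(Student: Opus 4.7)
The overall plan is to exploit the exact sequence~\eqref{eq: key exact sequence},
\[1 \to \underline{\Aut}^G_C(X) \to \underline{\Aut}^G(X) \to \Aut(C),\]
combined with Proposition~\ref{main prop C}, which provides representability of the kernel $\underline{\Aut}^G_C(X)$ by an abelian $\k$-group scheme locally of finite type. In cases $(a)$ and $(c)$, it then suffices to establish that the image of $\underline{\Aut}^G(X)$ in $\Aut(C)$ is representable by a group scheme locally of finite type, and to invoke the extension lemma (Lemma~\ref{lem: lemme FILA representabilite des extensions}) to assemble the pieces. Case $(b)$ admits a more direct approach, bypassing the exact sequence.

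Case $(a)$ is the cleanest. If the smooth compactification $\bar{C}$ of $C$ has genus at least $2$, then by Hurwitz's theorem the group $\Aut(\bar{C})$ is finite. Since $C$ is a smooth curve, every automorphism of $C$ extends uniquely to $\bar C$, so $\Aut(C)$ identifies with the subgroup of $\Aut(\bar C)$ stabilizing the finite set $\bar C \setminus C$, which remains finite. Consequently, the image of $\underline{\Aut}^G(X) \to \Aut(C)$ is contained in a finite group scheme and is automatically representable.

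In case $(b)$, when $X$ is a normal quasi-affine $G$-variety, I would instead work with the affinization $X^{\mathrm{aff}} := \Spec H^0(X,\O_X)$. Quasi-affineness gives that $X \hookrightarrow X^{\mathrm{aff}}$ is an open $G$-equivariant immersion, and every $G$-equivariant automorphism of $X$ extends uniquely to $X^{\mathrm{aff}}$ through its action on $H^0(X,\O_X)$. Representability of $\underline{\Aut}^G(X^{\mathrm{aff}})$ follows from the classical linearization argument on finite-dimensional $G$-stable subspaces of $H^0(X,\O_X)$, and $\underline{\Aut}^G(X)$ is cut out as the subgroup scheme of automorphisms of $X^{\mathrm{aff}}$ preserving the open subset $X$.

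For case $(c)$, in which $X$ is a Mori dream space, the finite generation of the Cox ring yields a $G$-equivariant projective completion $\bar X$ of some small $\Q$-factorial modification of $X$; Matsumura--Oort then gives representability of $\underline{\Aut}^G(\bar X)$. A $G$-equivariant automorphism of $X$ acts on the Cox ring and hence permutes the Mori chambers with finite index, so up to a finite group it extends to $\bar X$, and one argues that the image of $\underline{\Aut}^G(X) \to \Aut(C)$ is a locally closed subgroup of the scheme-theoretic image of $\underline{\Aut}^G(\bar X) \to \Aut(\bar C)$. The main obstacle is precisely this last step: controlling how $G$-equivariant automorphisms of the non-compact $X$ interact with the Mori chamber decomposition and with the boundary of $\bar C$, so as to realize the image in $\Aut(C)$ as a locally closed subgroup scheme in a representable target.
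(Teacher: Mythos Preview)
Your treatment of case $(a)$ matches the paper's exactly.

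For case $(b)$, your reduction from quasi-affine to affine via the affinization $X^{\mathrm{aff}}=\Spec H^0(X,\O_X)$ is the same as the paper's. The gap is the sentence ``Representability of $\underline{\Aut}^G(X^{\mathrm{aff}})$ follows from the classical linearization argument on finite-dimensional $G$-stable subspaces of $H^0(X,\O_X)$.'' No such argument is given, and none exists at that level of generality: linearizing the $G$-action embeds $X^{\mathrm{aff}}$ equivariantly in a representation, but there is no target algebraic group in which $\underline{\Aut}^G(X^{\mathrm{aff}})$ would obviously sit as a closed subgroup. The paper's route is substantially different and uses the complexity-one hypothesis in an essential way: one passes to the normal affine $T$-variety $X^{\mathrm{aff}}\sslash U$ (for $T\subset B\subset G$ a maximal torus in a Borel), observes that $\underline{\Aut}^G(X^{\mathrm{aff}})$ \emph{injects} into $\underline{\Aut}^T(X^{\mathrm{aff}}\sslash U)$ because a $G$-module morphism is determined by its restriction to highest-weight spaces, proves representability of $\underline{\Aut}^T$ for affine complexity-one $T$-varieties via the Altmann--Hausen description (Propositions~\ref{prop: affine T-varietes, Aut_C^T} and~\ref{prop: affine T-varieties, representability}), and finally checks that the $G$-algebra condition cuts out a closed subgroup. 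None of this is ``classical linearization.''

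For case $(c)$, your sketch via a projective completion and Matsumura--Oort is not the paper's argument, and you yourself identify the obstruction: there is no mechanism offered for showing that the image of $\underline{\Aut}^G(X)$ in $\Aut(C)$ is locally closed in something representable, nor for controlling how automorphisms of a non-complete Mori dream space relate to those of a chosen completion. The paper does not compactify at all. Instead it passes to the \emph{characteristic space} $\widehat X=\Spec_{\O_X}(\mathcal R_X)$, which is a normal quasi-affine variety carrying a $(\widehat G=H_X^\circ\times G)$-action of complexity one, applies case $(b)$ to get representability of $\underline{\Aut}^{\widehat G}(\widehat X)$, bootstraps through a sequence of extensions to $\underline{\Aut}^G(\widehat X,H_X)$, and then uses the short exact sequence $1\to H_X\to\Aut^G(\widehat X,H_X)\to\underline{\Aut}^G(X)\to 1$ from \cite[Th.~4.2.4.1]{ADHL15} to conclude. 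So case $(c)$ is reduced to case $(b)$ via Cox-ring machinery, not to the complete case via Matsumura--Oort.
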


As already hinted above, the only remaining step to prove the representability of $\underline{\Aut}^G(X)$ is to establish the representability of its image in $\Aut(C)$ in the exact sequence~\eqref{eq: key exact sequence}. The cases above are those in which we were able to carry out this part of the plan.

To the best of our knowledge, if $X$ is a $G$-variety of complexity one, there is no example where the group sheaf $\underline{\Aut}^G(X)$ fails to be representable by a $\k$-group scheme locally of finite type, and we see no reason for such examples to exist.

\smallskip

With these results at hand, we address in \S~\ref{sec: real forms} the problem of counting real forms of a fixed real $G$-variety $X$. These are known to be classified by the (nonabelian) Galois cohomology set $\H^1(\R, \Aut^G(X))$, and hence the exact sequence \eqref{eq: key exact sequence} allows us to study this set via the exact sequence of pointed sets:
\[
\H^1(\R, \Aut_C^G(X)) \to \H^1(\R, \Aut^G(X)) \to \H^1(\R, K),
\]
where $K$ denotes the image of $\Aut^G(X)$ in $\Aut(C)$. Since the real Galois cohomology of an $\R$-group scheme $\mathcal{G}$, locally of finite type and with $\pi_0(\mathcal{G}(\C))$ either a finite group or a finitely generated abelian group, is finite (see Proposition~\ref{prop: finiteness of H1R}), we immediately obtain the following finiteness result:
\begin{theorem}[Theorem~\ref{thm: main result on real forms}]
Let $G$ be a real connected reductive group, and let $X$ be a  $G$-variety of complexity one. 
Assume that the group sheaf $\underline{\Aut}^G(X)$ is representable—for instance, if one of the conditions \emph{(a)}-\emph{(b)}-\emph{(c)} from Theorem~\ref{main th D} holds. Then $X$ admits only finitely many $(\mathbb{R}, G)$-forms.
\end{theorem}

We conclude Section~\ref{sec: real forms} with two examples illustrating how to compute explicitly the number of real forms of a given complexity-one $G$-variety.
For a general strategy in the case of an arbitrary almost homogeneous $G$-variety, we refer to \cite[Section~4.5]{MJT24}.

\subsection*{Notation and setting}\label{sec: notations}
Unless otherwise specified, $\k$ denotes a perfect field.

All group sheaves in this article are considered over the small smooth site over $\k$. This means that the sheaf takes values on smooth $\k$-schemes and satisfies the sheaf property for smooth coverings. We follow the Stacks Project definition of smoothness (see \cite[\href{https://stacks.math.columbia.edu/tag/00T2}{Tag 00T2}, \href{https://stacks.math.columbia.edu/tag/01V5}{Tag 01V5}]{stacks-project}), which does not assume that the morphism is of finite type, but only locally of finite type.

Every group scheme is naturally seen as a group sheaf via its functor of points. Note that a $\k$-group scheme $G$ and its reduction $G_\mathrm{red}$ (which is also a $\k$-group scheme since the base field $\k$ is perfect) represent the same group sheaf since we work over the small smooth site over $\k$. In particular, every time we mention representability, we assume that the group scheme is smooth.

Given a $\k$-variety $X$ and a smooth $\k$-scheme $S$, we denote by $X_S$ the base change $X\times_\k S$ and by $\underline{\Aut}(X)$ the group sheaf defined by $S\mapsto \Aut_S(X_S)$. This is indeed a sheaf since $X$ can be seen as a sheaf and maps of sheaves glue over any site by \cite[\href{https://stacks.math.columbia.edu/tag/04TQ}{Tag 04TQ}]{stacks-project}.
If moreover, $X$ is endowed with an algebraic group action $G\ \rotatebox[origin=c]{-90}{$\circlearrowright$}\ X$, then we denote by $\underline{\Aut}^G(X)$ the subgroup sheaf of $\underline{\Aut}(X)$ defined by $S\mapsto \Aut_{S}^{G_S}(X_S)$.

All cohomology groups (and sets) will be \'etale cohomology groups (or sets).

\subsection*{Acknowledgments}
We thank Gary Martínez-N\'u\~nez, Adrien Dubouloz, Diego Izquierdo and Michel Brion for very useful discussions.

The authors gratefully acknowledge the financial support of the ECOS-ANID project no.~C23E07 (France), no.~ECOS230044 (Chile), \emph{“Equivariant Algebraic Geometry with a view towards Birational Geometry and Arithmetics”}. This work was initiated during a visit of the second author to Chile in December 2024, funded by the ECOS-ANID project. It also falls within the general framework of the MathAmSud project no.~24-MATH-08 NPAAG, \emph{“Geometry and Arithmetics of Algebraic Varieties of Non-Positive Curvature”}.

The first author's resarch was partially funded by ANID via Fondecyt Grant N\textsuperscript{o}1240001. The second author acknowledges the support of the CDP C2EMPI, as well as the French State under the France-2030 programme, the University of Lille, the Initiative of Excellence of the University of Lille, the European Metropolis of Lille for their funding and support of the R-CDP-24-004-C2EMPI project.

\section{Preliminaries on representability}
We start by proving the following lemma, which is a variant of \cite[Lem.~2.2]{FILA}.

\begin{lemma}\label{lem: des foncteurs en groupes aux corps}
Let $\k$ be a perfect field,.
Let $\underline{H}$ be a subgroup sheaf of a smooth $\k$-group scheme $G$. Assume that:
\begin{enumerate}
    \item $\underline{H}$ commutes, on $\k$-algebras, with filtered direct limits; and \label{item comm with direct limits}
    \item there exists a smooth $\k$-subgroup scheme $G_0\subset G$ such that for every algebraically closed field $\K\supset\k$, we have $\underline{H}(\K)=G_0(\K)$.\label{item equality on alg closed fields}
\end{enumerate}
Then $\underline{H}$ is represented by the smooth $\k$-group scheme $G_0$.
\end{lemma}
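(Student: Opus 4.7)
The plan is to show that $\underline{H}$ and $G_0$ coincide as subsheaves of $G$ on the small smooth site, i.e., that $\underline{H}(S) = G_0(S)$ inside $G(S)$ for every smooth $\k$-scheme $S$. I would prove the two inclusions separately.

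\emph{Inclusion $\underline{H} \subset G_0$.} Given $f \in \underline{H}(S) \subset G(S)$ with corresponding morphism $\tilde{f}\colon S \to G$, I use that a smooth subgroup scheme of a smooth $\k$-group scheme is a closed subscheme, so $\tilde{f}^{-1}(G_0) \subset S$ is closed. By hypothesis~(ii), for every algebraically closed $\K \supset \k$ and every geometric point $\Spec(\K) \to S$, the composition with $\tilde{f}$ lies in $\underline{H}(\K) = G_0(\K)$, hence factors through $\tilde{f}^{-1}(G_0)$. Since $S$ is reduced (being smooth over a perfect field), a closed subscheme containing every geometric point of $S$ must equal $S$, so $\tilde{f}$ factors through $G_0$ and $f \in G_0(S)$.

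\emph{Inclusion $G_0 \subset \underline{H}$.} This is the main obstacle. Using the sheaf property of $\underline{H}$, the question is smooth-local on $S$, so I would pass to strict henselizations $R := \O^{\mathrm{sh}}_{S,\bar{s}}$ at geometric points $\bar{s}$. Writing $R = \varinjlim_i R_i$ as a filtered colimit of étale neighborhoods, hypothesis~(i) gives $\underline{H}(R) = \varinjlim_i \underline{H}(R_i)$ (and similarly for $G_0$, which is locally of finite presentation), reducing the question to the case of a strictly henselian $R$ with algebraically closed residue field $\bar\kappa$. A further reduction via~(i) along $\bk = \varinjlim \k'$ as a filtered colimit of finite separable extensions (possible since $\k$ is perfect) lets us assume $\k = \bar\kappa$, giving a canonical section $\k \hookrightarrow R$ of the residue map $R \twoheadrightarrow \k$.

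For $f \in G_0(R)$, I would decompose $f = f_0 \cdot g$, where $f_0 \in G_0(\k) = \underline{H}(\k)$ (by~(ii)) lifts to $\underline{H}(R)$ via the section, and $g \in G_0(R)$ satisfies $g|_\k = 1$. The crux is to show that any such infinitesimal deformation $g$ of the identity already lies in $\underline{H}(R)$: smoothness of $G_0$ at the identity determines $g$ by $\dim G_0$ elements of $\mathfrak{m}_R$; using~(i), $g$ descends to a finite-type étale neighborhood $R_i$, and the sheaf property of $\underline{H}$ combined with the fact $1 \in \underline{H}(\k)$ should then yield $g \in \underline{H}(R_i)$ for some $i$, hence $g \in \underline{H}(R)$. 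The hypotheses (i) and (ii) are precisely what is needed to promote the pointwise identification from alg. closed fibers up to the strict henselization in this final infinitesimal step.
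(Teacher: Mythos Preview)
Your argument for the inclusion $\underline{H}\subset G_0$ is correct and close in spirit to the paper's: the paper uses the generic point of each connected component of $S$ together with a spreading-out argument, while you use all geometric points and reducedness of $S$; both work.

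The gap is in the reverse inclusion. Your reduction to a strictly henselian local ring $R$ and the decomposition $f=f_0\cdot g$ are reasonable, but the final step does not go through. You write that ``the sheaf property of $\underline{H}$ combined with the fact $1\in\underline{H}(\k)$ should then yield $g\in\underline{H}(R_i)$''. The sheaf property lets you glue sections over a smooth \emph{cover}; it says nothing about lifting a section from a closed point. The inclusion of the closed point $\Spec(\bar\k)\hookrightarrow\Spec(R_i)$ is not a smooth covering (it is not even flat), so knowing that $g|_{\bar\k}=1\in\underline{H}(\bar\k)$ carries no information about whether $g\in\underline{H}(R_i)$. In effect, your ``crux'' is a restatement of what you are trying to prove, restricted to the kernel of reduction, and no argument has been supplied. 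Hypotheses~(i) and~(ii) do not obviously help here either: (ii) only controls $\underline{H}$ over fields, and (i) only lets you pass to filtered colimits, not lift along a closed immersion.

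For comparison, the paper does not give a self-contained argument for this direction: it invokes \cite[Lem.~2.2]{FILA} as a black box for the case where $G_0$ is connected, and then handles the general case by a diagram chase on the component group $G_0/G_0^\circ$ (using that over an algebraically closed field it is discrete and constant on connected smooth $S$). If you want to see how surjectivity is actually established in the connected case, that reference is where the real work happens, and the mechanism there is not a henselian or infinitesimal lifting argument.
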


\begin{proof}
We first show that there is a natural inclusion of sheaves $\underline{H} \hookrightarrow G_0$. Since both $\underline{H}$ and $G_0$ are sheaves that commute with filtered direct limits, it suffices to show that for every smooth connected $\k$-scheme $S$, there is an inclusion $\underline{H}(S) \hookrightarrow G_0(S)$.

Let $S$ be a smooth connected $\k$-scheme, and denote by $\K$ the separable closure of its function field $\k(S)$. Consider the following commutative diagram:
\[
\xymatrix{
\underline{H}(S) \ar[d] \ar@{^{(}->}[rr] &  & G(S) \ar@{^{(}->}[d] \\
\underline{H}(\K) \ar@{=}[r] & G_0(\K) \ar@{^{(}->}[r] & G(\K)
}
\]
Here, the top horizontal map is injective because $\underline{H}$ is a subsheaf of $G$, and the right vertical map is well-known to be injective. The equality $\underline{H}(\K) = G_0(\K)$ follows from assumption~\ref{item equality on alg closed fields}. It follows that $\underline{H}(S)$ injects into $G(S) \cap G_0(\K)$.

Since $G_0$ is a closed subgroup scheme of $G$, the intersection $G(S) \cap G_0(\K)$ equals $G_0(S)$ (by a standard spreading-out argument, valid for smooth schemes). Thus, we obtain:
\[
\underline{H}(S) \subset G_0(S),
\]
which gives a natural transformation $\underline{H} \hookrightarrow G_0$ of group sheaves.

To conclude, it remains to show that this inclusion is surjective. If $G_0$ is connected, the result follows from \cite[Lem.~2.2]{FILA}.

If $G_0$ is not connected, we may assume, by Galois descent, that $\k$ is algebraically closed. In this case, the quotient $G_0/G_0^\circ$ is a discrete $\k$-group scheme satisfying $(G_0/G_0^\circ)(S)=(G_0/G_0^\circ)(\k)$ for any smooth connected $\k$-scheme $S$. In particular, for any such $S$, we have an exact sequence of groups
\[1\to G_0^\circ(S)\to G_0(S)\to (G_0/G_0^\circ)(S)\to 1.\]
Since the connected case is already established, we know that the sheaf $G_0^\circ\cap\underline{H}$ is represented by $G_0^\circ$, which implies that $(G_0^\circ\cap\underline{H})(S)=G_0^\circ(S)$ for any smooth $\k$-scheme $S$. This shows that the inclusion $\underline{H}\hookrightarrow  G_0$ induces an inclusion $\underline{H}/(G_0^\circ\cap\underline{H})\hookrightarrow G_0/G_0^\circ$. 
Now, for any smooth connected $\k$-scheme $S$, consider the following commutative diagram:
\[\xymatrix{
(\underline{H}/(G_0^\circ\cap\underline{H}))(\k) \ar[r] \ar@{=}[d] & (\underline{H}/(G_0^\circ\cap\underline{H}))(S) \ar@{^{(}->}[d] \\
(G_0/G_0^\circ)(\k) \ar@{=}[r] & (G_0/G_0^\circ)(S),
}\]
where the right vertical arrow is injective. From the diagram, we deduce then that it is actually an equality.

Next, consider the following commutative diagram with exact rows:
\[\xymatrix{
1 \ar[r] & (G_0^\circ\cap\underline{H})(S) \ar[r] \ar@{=}[d] & \underline{H}(S) \ar[r] \ar@{^{(}->}[d] & (\underline{H}/(G_0^\circ\cap\underline{H}))(S) \ar@{=}[d] \\
1 \ar[r] &  G_0^\circ(S) \ar[r] & G_0(S) \ar[r] & (G_0/G_0^\circ)(S) \ar[r] & 1.
}\]
Since both right-hand terms are the same for every smooth connected $\k$-scheme $S$ (in particular for $S=\k$) by the previous diagram, and the upper-right arrow is surjective for $S=\k$ by \ref{item equality on alg closed fields}, we see that it is surjective for any such $S$, which implies \(\underline{H}(S) = G_0(S)\) for any smooth connected \(\k\)-scheme \(S\). Finally, again, since both $\underline{H}$ and $G_0$ are sheaves and commute with direct limits, we get the equality for any smooth $\k$-scheme $S$.
\end{proof}

Lemma \ref{lem: des foncteurs en groupes aux corps} is used to prove the following proposition. Both results will be used in the proofs of representability.

\begin{proposition}\label{prop: representability of closed subgroup of Aut}
Let $\k$ be a perfect field. Let $X$ be a $\k$-scheme of finite type, and $F$ a closed subset of $X$.
Let $G$ be a smooth $\k$-subgroup scheme of the group sheaf $\underline{\Aut}(X)$.
Consider a subgroup sheaf $\underline{H}\subset G \subset\underline{\Aut}(X)$ such that, for every algebraically closed field $\K \supset \k$, we have
\[
\underline{H}(\K)=\{\varphi\in G(\K)\subset\Aut_{\K}(X_{\K}) \mid \varphi(F_{\K})=F_{\K}\}.
\]
Then $\underline{H}$ is representable by a smooth subgroup scheme of $G$.
\end{proposition}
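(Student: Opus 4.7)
The plan is to apply Lemma~\ref{lem: des foncteurs en groupes aux corps}. This reduces the problem to two tasks: constructing a smooth closed $\k$-subgroup scheme $G_0 \subset G$ with $G_0(\K) = \underline{H}(\K)$ for every algebraically closed field $\K \supset \k$, and verifying that $\underline{H}$ commutes with filtered direct limits of $\k$-algebras.

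To construct $G_0$, I would consider the action morphism $\alpha\colon G \times X \to X$ together with the open subscheme $Y := (G \times F) \setminus \alpha^{-1}(F)$ of $G \times F$. The projection $\pi\colon G \times F \to G$ is flat (since $F \to \Spec(\k)$ is flat, as we work over a field) and locally of finite presentation (since $F$ is of finite type over $\k$); hence it is an open morphism. Consequently $\pi(Y) \subset G$ is open, and its complement $T_1 := G \setminus \pi(Y)$ is a closed subscheme of $G$ whose $\K$-points on any algebraically closed field equal $\{g \in G(\K) : g \cdot F_\K \subseteq F_\K\}$. Taking the intersection $G_0' := T_1 \cap \sigma(T_1)$, where $\sigma\colon G \to G$ denotes inversion, yields a closed subscheme with $G_0'(\K) = \underline{H}(\K)$ for every algebraically closed $\K$. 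Since $\k$ is perfect, the reduction $G_0 := (G_0')_{\mathrm{red}}$ is a closed subgroup scheme of $G$; being reduced and locally of finite type over a perfect field, it is smooth.

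To verify the filtered-limit condition, I would identify $\underline{H}$ with the na\"ive transporter functor $\underline{H}'(S) := \{g \in G(S) : g \cdot F_S = F_S\}$, viewed as a subsheaf of $G$ on the small smooth site. These two subsheaves agree on algebraically closed fields by hypothesis, and since the condition $g \cdot F_S = F_S$ descends along faithfully flat covers, the sheaf axiom together with agreement on geometric points forces $\underline{H} = \underline{H}'$ throughout. The functor $\underline{H}'$ is manifestly defined by finite-type data and therefore commutes with filtered direct limits of $\k$-algebras.

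The main obstacle is the openness of $\pi(Y)$: a naive ``closedness of the transporter'' argument would require $F$ to be proper, which we do not assume. The flatness of $F/\k$, automatic since we work over a field, is precisely what allows the argument to go through in this generality. Once these two steps are in place, Lemma~\ref{lem: des foncteurs en groupes aux corps} yields $\underline{H} \simeq G_0$, giving the desired representability.
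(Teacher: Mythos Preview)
Your overall strategy coincides with the paper's: both apply Lemma~\ref{lem: des foncteurs en groupes aux corps} after checking its two hypotheses. Your construction of $G_0$ via the openness of the flat projection $\pi\colon G\times F\to G$ is correct and essentially equivalent to the paper's argument, since your closed set $T_1 = G\setminus\pi(Y)$ is nothing but $\bigcap_{x\in F}\alpha_x^{-1}(F)$, where $\alpha_x\colon G\to X$ is the orbit map at $x$ (this is exactly how the paper phrases it, after first reducing to an algebraically closed base field). Your explicit intersection $T_1\cap\sigma(T_1)$ is a clean way to pass from $g\cdot F\subseteq F$ to $g\cdot F=F$; the paper leaves this implicit.

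The weak point is your verification of condition~(i). The assertion that ``the sheaf axiom together with agreement on geometric points forces $\underline{H}=\underline{H}'$'' is not a valid general principle: geometric points do not form a covering in the smooth topology, so two subsheaves of $G$ can in principle agree on all algebraically closed fields without coinciding on a general smooth $S$. The paper does not appeal to such a principle. Instead it first extends the equality $\underline{H}(\K)=\underline{H}'(\K)$ from algebraically closed fields to arbitrary fields by Galois descent, and then, for a smooth connected $S$, restricts an element $\varphi\in\underline{H}(S)$ to the generic point $\eta$, uses the field case to obtain $\varphi_\eta(F_\eta)=F_\eta$, and spreads this out (taking schematic closure) to conclude $\varphi(F_S)=F_S$. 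Only once this explicit description of $\underline{H}(S)$ is established does the standard finite-type argument for compatibility with filtered direct limits go through. You should replace your one-line justification by this spreading-out step.
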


This proposition can also be obtained as a consequence of \cite[Exp.~VI.B, Ex.~6.2.4.e)]{SGA3I}. Nevertheless, we provide a proof, as it may help the reader better understand how to apply Lemma~\ref{lem: des foncteurs en groupes aux corps} in the arguments that follow.

\begin{proof}
To prove the proposition, we will apply Lemma \ref{lem: des foncteurs en groupes aux corps}. Thus, we need to verify that the group sheaf $\underline{H}$ satisfies conditions \ref{item comm with direct limits} and \ref{item equality on alg closed fields} of Lemma \ref{lem: des foncteurs en groupes aux corps}.

\smallskip

Let $\K \supset \k$ be a algebraically closed field. Up to base change, we may assume $\K = \k$. Since $G$ is smooth a group scheme, its action on $X$ is given by a smooth morphism of schemes:
\[
\alpha\colon G \times X \to X.
\]
For a fixed point $x \in X$, consider the evaluation morphism:
\[
\alpha_x\colon G \to X,\ \varphi \mapsto \varphi(x),
\]
which is the restriction of $\alpha$ to $G \times \{x\} \simeq G$. This is a morphism of $\k$-schemes. The group $\underline{H}(\k)$ is then given by:
\[
\underline{H}(\k) = \{ \varphi \in G(\k) \subset \Aut(X) \mid \varphi(F) = F \} = \bigcap_{x \in F} \alpha_x^{-1}(F).
\]
Since each $\alpha_x^{-1}(F)$ is a reduced closed subset of $G$, their intersection is also closed in $G$. This shows that $\underline{H}(\k)$ coincides with the $\k$-points of a smooth subgroup scheme of $G$. That is condition \ref{item equality on alg closed fields} of Lemma \ref{lem: des foncteurs en groupes aux corps}.
\smallskip

We now check that the group sheaf $\underline{H}$ satisfies, for every smooth $\k$-scheme $S$,
\begin{equation} \label{eq: H for K-points}
\underline{H}(S)=\{\varphi\in G(S)\subset\Aut_S(X_S) \mid \varphi(F_S)=F_S\}.
\end{equation}
Indeed, \eqref{eq: H for K-points} holds for $S = \Spec(\K)$ with $\K/\k$ algebraically closed by assumption. By Galois descent, \eqref{eq: H for K-points} then holds for arbitrary $\K/\k$. 
Next, if $S$ is a connected (hence integral) smooth $\k$-scheme, we may consider its generic point $\eta$. We know then that $\varphi\in\underline{H}(S)$ sends $F_\eta$ to $F_\eta$. A classical spreading out argument tells us then that there exists an open subscheme of $F_S$ that is sent to (an open subscheme of) $F_S$. Taking the schematic closure, we see that $F_S$ must be sent to $F_S$. The general case follows immediately.

\smallskip

Next, we check that $\underline{H}$ commutes, on $\k$-algebras, with filtered direct limits, which is condition \ref{item comm with direct limits}  of Lemma \ref{lem: des foncteurs en groupes aux corps}.

Let $\{A_i\}_{i \in I}$ be a filtered family of $\k$-algebras, and let $A = \varinjlim_{i \in I} A_i$ denote the corresponding direct limit. For each $i \in I$, there is a natural map $A_i \to A$, and hence a group homomorphism
\[
\Aut_{A_i}(X_{A_i}) \to \Aut_A(X_A),
\]
sending $\underline{H}(A_i)$ to $\underline{H}(A)$.
Since these maps are compatible, we obtain natural group homomorphisms
\[
\varinjlim_{i \in I} \Aut_{A_i}(X_{A_i}) \to \Aut_A(X_A)\quad\text{and}\quad \varinjlim_{i \in I}\underline{H}(A_i)\to \underline{H}(A).
\]
We claim these are isomorphisms.

Since $X$ is of finite type over $\k$, any element $\varphi \in \Aut_A(X_A)$ is determined by finitely many polynomials with coefficients in $A$. Because the limit is filtered, there exists some $i \in I$ such that all these coefficients lie in $A_i$. Therefore, $\varphi$ arises from $\varphi_i\in\Aut_{A_i}(X_{A_i})$. If moreover $\varphi(F_A)=F_A$, then any local generator of the ideal sheaf $I(F)$ defining $F$ (which is finitely generated since $F$ is of finite type) is sent to the ideal sheaf $I(F_{A})=I(F)\otimes_\k A$. Since there are finitely many generators, using again that the limit is filtered we see that, up to replacing $i\in I$, all of these images lie in $I(F)\otimes_\k A_i$, which implies that $\varphi_i(F_{A_i})=F_{A_i}$.

Moreover, if $\varphi$ is the identity in $\Aut_A(X_A)$, it must also be the identity in $\Aut_{A_i}(X_{A_i})$ for some $i \in I$, and thus in the direct limit. This establishes both injectivity and surjectivity of the maps, proving that
\[
\varinjlim_{i \in I} \Aut_{A_i}(X_{A_i}) \simeq \Aut_A(X_A) \quad\text{and}\quad \varinjlim_{i \in I}\underline{H}(A_i)\simeq \underline{H}(A).
\]
Consequently, $\underline{H}$ commutes with filtered direct limits on $\k$-algebras.

\smallskip

Hence, by Lemma \ref{lem: des foncteurs en groupes aux corps}, $\underline{H}$ is representable by a smooth subgroup scheme of $G$.
\end{proof}

\begin{remark}
The argument in the proof of Proposition \ref{prop: representability of closed subgroup of Aut} that proves commutativity with filtered direct limits is standard. We gave it once here above for the comfort of the reader, but we will be omitting its verification whenever we apply Lemma \ref{lem: des foncteurs en groupes aux corps} in what follows.
\end{remark}

We will also make use of the following result, which slightly generalizes \cite[Lem.~2.3]{FILA} and constitutes another application of \cite[Lem.~2.2]{FILA}.

\begin{lemma}\label{lem: lemme FILA representabilite des extensions}
Let $\k$ be an arbitrary field. Let $G,H$ be smooth $\k$-group schemes. Let $H^\circ$ denote the identity component of $H$ and assume that $H/H^\circ$ is \'etale locally isomorphic to a finitely generated abelian group. Let
\[1\to H\to\mathcal E\to G\to 1,\]
be an exact sequence of group sheaves. Then $\mathcal E$ is representable by a smooth $\k$-group scheme.
\end{lemma}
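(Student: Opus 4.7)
The plan is to split the given extension into two simpler ones using the characteristic subgroup $H^\circ \subseteq H$. Since $H^\circ$ is characteristic in $H$ and $H$ is normal in $\mathcal{E}$, the subgroup $H^\circ$ is normal in $\mathcal{E}$. Setting $\mathcal{F} := \mathcal{E}/H^\circ$ as a group sheaf, one obtains two short exact sequences of group sheaves:
\[1 \to H^\circ \to \mathcal{E} \to \mathcal{F} \to 1 \quad\text{and}\quad 1 \to H/H^\circ \to \mathcal{F} \to G \to 1.\]
The first sequence has smooth connected kernel $H^\circ$, so once $\mathcal{F}$ is known to be representable by a smooth $\k$-group scheme, the representability of $\mathcal{E}$ follows directly from \cite[Lem.~2.3]{FILA}. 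Thus everything reduces to proving that the second sequence is representable, i.e., that an extension of a smooth $\k$-group scheme by an étale group sheaf $D := H/H^\circ$ of the indicated form is always representable.

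For the reduced problem, I would first pass to a finite separable extension $\k'/\k$ (which we may take Galois, by replacing $\k'$ with its Galois closure) over which $D_{\k'}$ becomes the constant group scheme attached to a finitely generated abelian group $\Gamma \simeq \Z^m \oplus \Gamma_{\mathrm{tors}}$. Over $\k'$, the quotient morphism $\mathcal{F}_{\k'} \to G_{\k'}$ is then a $\Gamma$-torsor for the étale topology, and since $\Gamma$ is a discrete group, any such torsor is representable by a smooth $\k'$-scheme étale over $G_{\k'}$ (étale-locally on $G_{\k'}$, the total space is a disjoint union of copies of the base indexed by $\Gamma$). The group law transported by the sheaf structure endows this scheme with a compatible group structure, making $\mathcal{F}_{\k'}$ a smooth $\k'$-group scheme, locally of finite type.

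To descend back to $\k$, one uses that the group sheaf structure of $\mathcal{F}$ supplies a canonical descent datum for $\mathcal{F}_{\k'}$ along the finite étale cover $\Spec(\k') \to \Spec(\k)$. Since the morphism $\mathcal{F}_{\k'} \to G_{\k'}$ is affine on each connected component and $G_{\k'} \to G$ is itself effective for descent, standard faithfully flat descent applies and yields a smooth $\k$-group scheme representing $\mathcal{F}$. Combined with the first step, this proves the representability of $\mathcal{E}$.

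The main technical subtlety I expect is that the free rank-$m$ part of $\Gamma$ contributes countably many connected components, so $\mathcal{F}_{\k'}$ (and hence $\mathcal{E}$) is genuinely only locally of finite type, not of finite type. This is compatible with the paper's convention on smoothness as being locally of finite type, and descent remains effective because it is performed component by component using the étale and locally affine structure of $\mathcal{F}_{\k'} \to G_{\k'}$, so this is a mild rather than substantial obstacle.
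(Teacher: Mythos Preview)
Your approach is correct and essentially the same as the paper's: the paper observes that the proof of \cite[Lem.~2.3]{FILA} carries over word for word, the only step requiring attention being the representability of $(H/H^\circ)$-torsors over connected components of $G$, which it settles by reducing \'etale-locally to $H/H^\circ\simeq\Z^n\times F$ and using that torsors under finite groups are always representable. Your two-step split via the characteristic subgroup $H^\circ$ is precisely how one would unpack that argument, with your second exact sequence being exactly the $(H/H^\circ)$-torsor step the paper singles out.
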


\begin{proof}
If $H/H^\circ$ is \'etale locally isomorphic to a \emph{torsion-free} finitely generated abelian group, then this is exactly \cite[Lem.~2.3]{FILA}. The proof in general is word for word the one given in \textit{loc.~cit.} Indeed, the only place where the hypothesis on $H/H^\circ$ is needed is to ensure the representability of $(H/H^\circ)$-torsors over (any connected component of) $G$. 
And this still holds in this new context since, by \'etale descent, we may assume that $H/H^\circ\simeq \Z^n\times F$, with $F$ finite and constant. And since torsors under finite groups are always representable, we are done.
\end{proof}

\section{The case of almost homogeneous varieties}\label{sec: spherical case}
We now prove the representability of the group sheaf $\underline{\Aut}^G(X)$, when $X$ is an almost homogeneous $G$-variety (of arbitrary complexity).
Recall that a $G$-variety $X$ is said to be \emph{homogeneous}, respectively \emph{almost homogeneous}, under the $G$-action if the morphism
\[
(\alpha, \operatorname{pr}_2)\colon G \times X \to X \times X, \quad (g,x) \mapsto (g \cdot x, x)
\]
is surjective, respectively dominant, where $\alpha\colon G \times X \to X$ denotes the $G$-action on $X$, and $\operatorname{pr}_2$ is the projection onto the second factor.

We start with the particular case of homogeneous spaces.

\begin{proposition}\label{prop: Aut^G(X) for homogeneous spaces}
Let $\k$ be a perfect field. Let $G$ be a connected reductive $\k$-group, and let $X$ be a homogeneous $G$-variety. Then the group sheaf $\underline{\Aut}^G(X)$ is representable by a smooth linear $\k$-group, denoted $\Aut^G(X)$.
\end{proposition}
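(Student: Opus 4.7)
The plan is to establish the classical identification $\underline{\Aut}^G(G/H) \simeq N_G(H)/H$ at the level of sheaves on the smooth site, first working over a finite Galois extension $\k'/\k$ for which $X(\k')\neq\emptyset$, and then descending to $\k$. Such a $\k'$ exists since $X$ is a variety of finite type over $\k$, so it admits a closed point whose residue field is a finite extension of $\k$ (which we may then enlarge to a Galois closure).

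Fix such a $\k'$, choose $x_0 \in X(\k')$, and let $H = \Stab_{G_{\k'}}(x_0)$; the orbit map identifies $X_{\k'} \simeq G_{\k'}/H$. Right translation by $N_{G_{\k'}}(H)$ on $G_{\k'}$ commutes with the left $G_{\k'}$-action, has $H$ in its kernel, and thus descends to a morphism of group sheaves
\[
\mu\colon A := (N_{G_{\k'}}(H)/H)_{\mathrm{red}} \longrightarrow \underline{\Aut}^G(X)_{\k'}.
\]
Here $A$ is a smooth linear $\k'$-group: the quotient of an affine group scheme by a closed normal subgroup is affine, and reduced group schemes of finite type over a perfect field are smooth. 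Injectivity of $\mu$ is immediate, since $aH = H$ forces $a \in H$. For surjectivity, take a smooth $\k'$-scheme $S$ and $\varphi\in\underline{\Aut}^G(X)(S)$; the section $\varphi(x_0)\in X_{\k'}(S)$ lifts, after passing to a suitable cover $S'\to S$, to an element $a\in G_{\k'}(S')$, using that $G_{\k'}\to X_{\k'}$ is an $H$-torsor. The $G$-equivariance of $\varphi$ forces $\varphi|_{S'}$ to agree with right translation by $a$ on the entire orbit $X_{\k'}$, and the fact that $\varphi$ is an \emph{auto}morphism (not merely an equivariant morphism) yields the stabilizer identity $aHa^{-1}=H$, i.e., $a\in N_{G_{\k'}}(H)(S')$. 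Hence $\varphi|_{S'}=\mu(a\bmod H)$ lies in the image of $\mu$, and by the sheaf property so does $\varphi$ itself; thus $\mu$ is an isomorphism.

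For the descent step, since $\underline{\Aut}^G(X)$ is a sheaf defined on the smooth site over $\k$, its representing $\k'$-group scheme $A$ carries a canonical $\Gal(\k'/\k)$-descent datum compatible with the group operations; by faithfully flat descent for affine schemes this datum is effective, producing a smooth linear $\k$-group $\Aut^G(X)$ representing $\underline{\Aut}^G(X)$. The principal obstacle will be the surjectivity argument above, namely extracting the normalizer condition $a \in N_{G_{\k'}}(H)(S')$ from the isomorphism property of $\varphi$ and handling with care the local lifting of $\varphi(x_0)$ along the (possibly non-smooth in positive characteristic) $H$-torsor $G_{\k'} \to X_{\k'}$; once this is in place, the Galois descent step is essentially formal.
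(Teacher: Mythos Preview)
Your approach is correct and shares the paper's core idea—identify $\underline{\Aut}^G(X)$ with $(N_G(H)/H)_{\mathrm{red}}$ after acquiring a rational point—but the endgame differs. You build $\mu\colon(N_G(H)/H)_{\mathrm{red}}\to\underline{\Aut}^G(X)$ by right translation and prove it is an isomorphism of sheaves directly; the paper instead constructs the inverse map $\Psi\colon\underline{\Aut}^G(X)\to N_G(H)/H$, $\varphi\mapsto\varphi(x)^{-1}$, checks only that $\Psi$ is an \emph{injection} into a smooth group scheme, and then appeals to Lemma~\ref{lem: des foncteurs en groupes aux corps}, which upgrades such an injection to an isomorphism once one knows the bijection on $\bk$-points (a classical fact, cited from \cite{Tim11}). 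The paper's route thus bypasses the sheaf-level surjectivity you must argue, at the price of an auxiliary lemma it uses repeatedly elsewhere; your route is more self-contained but requires the lifting step you flag.

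On that flagged obstacle: the concern about lifting along a non-smooth $H$-torsor in positive characteristic is legitimate but harmless. The map $G_{\k'}\to G_{\k'}/H$ is always an fppf cover, and both $A$ and $\underline{\Aut}^G(X)$ are fppf sheaves (the former as a scheme, the latter as an automorphism sheaf of a scheme), so you may take $S'\to S$ fppf rather than smooth and descend $a\bmod H$ back to $S$ using injectivity of $\mu$. Incidentally, the cleanest way to extract $a\in N_G(H)(S')$ is to observe that a $G$-equivariant automorphism preserves scheme-theoretic stabilizers: $\Stab_G(\varphi(x_0))=\Stab_G(x_0)=H$, while $\Stab_G(a\cdot x_0)=aHa^{-1}$.
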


\begin{proof}
By Galois descent, we may assume that the base field $\k$ is algebraically closed. In particular, we may assume that $X \simeq G/H$ for some closed subgroup $H \subset G$. It follows from \cite[Prop.~1.8]{Tim11} that $\underline{\Aut}^G(X)(\k)$ is isomorphic to the $\k$-points of $N_G(H)/H$. We will prove that there exists an injective morphism of group sheaves $\underline{\Aut}^G(X) \hookrightarrow N_G(H)/H$, and then conclude with Lemma \ref{lem: des foncteurs en groupes aux corps}.

Let us first define a morphism of group sheaves $\underline{\Aut}^G(X) \to N_G(H)/H$ as follows. Let $x \in X$ be a closed $\k$-point whose stabilizer is $H$. Let $S$ be a smooth $\k$-scheme. If $\varphi \in \Aut^{G_S}_S(X_S)$, then $\varphi(x)$ is an $S$-point of $X$, and we would like to prove that it lies in $(N_G(H)/H)(S)$, where we view $N_G(H)/H$ as a subscheme of $X = G/H$. Since $X$ is a homogeneous space, there exists a smooth covering $S' \to S$ and an element $g \in G(S')$ such that $\varphi(x) = g \cdot x$. Since $\varphi$ is $G$-equivariant, for every $h \in H(S')$, we have
\[
g \cdot x = \varphi(x) = \varphi(h \cdot x) = h \cdot \varphi(x) = h \cdot (g \cdot x) = (hg) \cdot x.
\]
This implies that $g^{-1}hg \in H(S')$ for all $h \in H(S')$, that is, $g \in N_G(H)(S')$ and hence $\varphi(x) = g \cdot x \in (N_G(H)/H)(S')$. Since we know that this is an $S$-point, we see then that $\varphi(x) \in (N_G(H)/H)(S)$, as we claimed.

We define then
\[\Psi\colon \underline{\Aut}^G(X) \to N_G(H)/H,\ \varphi \mapsto (\varphi(x))^{-1},\]
which makes sense since $N_G(H)/H$ is an actual group scheme.
One can check by hand that $\Psi$ is a morphism of group sheaves. Moreover, for any smooth integral $\k$-scheme $S$, we see by reducing to the algebraic closure of the field of functions of $S$ that $\Psi(S)$ is injective, which implies that $\Psi$ is an injective morphism of group sheaves.

We have then an injection of sheaves as in Lemma \ref{lem: des foncteurs en groupes aux corps}, and a bijection at the level of algebraically closed fields by \cite[Prop.~1.8]{Tim11}. Commutativity with filtered direct limits is a standard check, so Lemma \ref{lem: des foncteurs en groupes aux corps} allows us to conclude that $\underline{\Aut}^G(X)$ is isomorphic to (the reduction of) $N_G(H)/H$.
\end{proof}

\begin{remark}
Note that the linear group $N_G(H)/H$ may be non-smooth when $\k$ is a field of positive characteristic. However, since we are working over the small smooth site over $\k$, both $N_G(H)/H$ and $(N_G(H)/H)_{\mathrm{red}}$ define the same sheaf. Therefore, the smooth $\k$-group scheme representing the group sheaf $\underline{\Aut}^G(X)$ is the latter.
\end{remark}

\begin{theorem}\label{th: aut of an almost homogeneous variety}
Let $\k$ be a perfect field.
Let $G$ be a connected reductive $\k$-group, and let $X$ be an almost homogeneous $G$-variety.
Then the group sheaf $\underline{\Aut}^G(X)$ is representable by a smooth linear $\k$-group, denoted $\Aut^G(X)$.
\end{theorem}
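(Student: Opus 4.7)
The plan is to reduce to an algebraically closed base by Galois descent and then identify $\underline{\Aut}^G(X)$ as a closed subgroup scheme of the linear group $\Aut^G(U) = N_G(H)/H$ provided by Proposition~\ref{prop: Aut^G(X) for homogeneous spaces}, where $U = G/H$ denotes the open orbit. Since $X$ is separated and $U$ is dense in $X$, restriction to $U$ induces an injective morphism of group sheaves $\underline{\Aut}^G(X) \hookrightarrow N := N_G(H)/H$, and the goal becomes to apply Lemma~\ref{lem: des foncteurs en groupes aux corps} to this inclusion. The natural candidate for the subgroup scheme required by that lemma is $M$, defined as the Zariski closure of $\underline{\Aut}^G(X)(\k)$ in $N$; as the Zariski closure of a subgroup, $M$ is a closed subgroup scheme of $N$, reduced and (since $\k$ is algebraically closed, hence perfect) automatically smooth.

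The heart of the proof is to verify that $M(\K) = \underline{\Aut}^G(X)(\K)$ for every algebraically closed extension $\K$ of $\k$. I would split this into three steps. \emph{Step A:} the equality at $\k$-points. The rational action $N \times X \dashrightarrow X$ extending the action on $U$ has a closed indeterminacy locus whose image in $N$ is constructible by Chevalley's theorem; hence the set of $g \in N(\k)$ such that the rational self-maps $\mu_g$ and $\mu_{g^{-1}}$ both extend to morphisms of $X$, which is exactly $\underline{\Aut}^G(X)(\k)$, is constructible. A classical result of Borel asserts that a constructible subgroup of an algebraic group is closed, so $\underline{\Aut}^G(X)(\k) = M(\k)$. \emph{Step B:} the group $M$ acts regularly on $X$. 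The rational action $M \times X \dashrightarrow X$ has an indeterminacy locus whose image in $M$ is constructible (Chevalley) and, by Step A, contains no $\k$-point; such a set must be empty over an algebraically closed field, so the action is everywhere defined. \emph{Step C:} the equality at $\K$-points for arbitrary algebraically closed $\K \supset \k$. The inclusion $M(\K) \subset \underline{\Aut}^G(X)(\K)$ follows from Step B. Conversely, any $\varphi \in \underline{\Aut}^G(X)(\K)$ spreads out to a morphism $\Spec A \to N$ for some finitely generated $\k$-subalgebra $A \subset \K$, and at each $\k$-point of $\Spec A$ this morphism takes values in $M(\k)$; since $\k$-points are dense in $\Spec A$ and $M$ is closed, the whole image of this morphism lies in $M$, so $\varphi \in M(\K)$.

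Once the three steps are in place, Lemma~\ref{lem: des foncteurs en groupes aux corps} yields that $\underline{\Aut}^G(X)$ is representable by the smooth closed subgroup scheme $M \subset N_G(H)/H$, which is automatically linear. The main obstacle I expect is Step A: the constructibility of $\underline{\Aut}^G(X)(\k)$ inside $N(\k)$ requires combining Chevalley's theorem for the projection of the indeterminacy locus with the classical fact that a constructible subgroup of an algebraic group is closed. Care is also needed in Step C to transfer the identification from $\k$-points to arbitrary algebraically closed extensions, although this is a fairly routine spreading-out argument given that $X$ is of finite type over $\k$.
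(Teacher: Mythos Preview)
Your overall strategy matches the paper's: reduce to an algebraically closed base, embed $\underline{\Aut}^G(X)$ into $N := \Aut^G(X_0) \simeq N_G(H)/H$ via restriction to the open orbit, and then identify the image as a closed subgroup using Lemma~\ref{lem: des foncteurs en groupes aux corps}. The divergence is in how this last identification is carried out, and your Step~A contains a genuine gap.

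You assert that the set of $g \in N(\k)$ for which $\mu_g$ extends to a morphism $X \to X$ is the complement of the Chevalley image of the indeterminacy locus of $\mu\colon N \times X \dashrightarrow X$. But that complement is only $\{g : \{g\}\times X \subset \mathrm{Dom}(\mu)\}$, which is a priori a \emph{proper} subset of $\{g : \mu_g \text{ extends}\}$: the domain of definition of a rational map restricted to a closed fibre can strictly contain the fibre of the global domain (for instance, $(x,y)\mapsto [x:y]$ on $\A^2$, restricted to $\{y=0\}$, extends everywhere although the global map is undefined at the origin). Hence your argument does not show that $\underline{\Aut}^G(X)(\k)$ is constructible, and Borel's criterion cannot be invoked. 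The same confusion reappears in Step~B, where you claim the image of the indeterminacy locus in $M$ contains no $\k$-point: all you know from Step~A is that $\mu_g$ extends for $g\in M(\k)$, not that $\{g\}\times X$ lies in $\mathrm{Dom}(\mu)$.

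The paper avoids this difficulty entirely. Instead of analyzing the indeterminacy of a rational action, it passes to Timashev's \emph{universal model} $\mathbb{X}_G$ of the open orbit $X_0$: a (generally non-Noetherian, non-separated) $G$-scheme containing every embedding of $X_0$ as an open subset and on which $N=\Aut^G(X_0)$ acts \emph{regularly}. One then works inside the finite-type open subscheme $\mathbb{X}_G':=N\cdot X$, so that for every algebraically closed $\K$ one has
\[
\underline{\Aut}^G(X)(\K)=\{\varphi\in N(\K)\subset\Aut_\K(\mathbb{X}'_{G,\K}) : \varphi(X_\K^c)=X_\K^c\},
\]
with $X^c:=\mathbb{X}_G'\setminus X$ closed; Proposition~\ref{prop: representability of closed subgroup of Aut} then applies directly. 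The device you are missing is precisely this regularization of the $N$-action on a larger ambient scheme, which turns the awkward ``$\mu_g$ extends'' condition into the closed condition ``$\varphi$ stabilizes $X^c$''.
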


\begin{proof}
By Galois descent, we may assume that the base field $\k$ is algebraically closed.
Let $X$ be an almost homogeneous $G$-variety and let $X_0 \simeq G/H$ be the corresponding homogeneous space. It follows from Proposition \ref{prop: Aut^G(X) for homogeneous spaces} that the theorem holds for $X_0$. We will apply Proposition~\ref{prop: representability of closed subgroup of Aut} to deduce the representability of the group sheaf $\underline{\Aut}^G(X)$ as an algebraic subgroup of $\Aut^G(X_0)$.

Following Timashev (see \cite[\S12.2]{Tim11}), we denote by $\mathbb{X}_G$ the \emph{universal model of $X_0$}. 
Roughly speaking, $\mathbb{X}_G$ is obtained by gluing together all the $G$-varieties containing $X_0$ as a $G$-stable dense open subset.
This is an irreducible scheme---generally neither Noetherian nor separated---equipped with a regular $G$-action and containing a $G$-stable open subset $G$-isomor\-phic to $X_0$. Moreover, $\Aut^G(X_0)$ acts naturally on $\mathbb{X}_G$ (see \cite[\S3]{Mou23} for a combinatorial description of this $\Aut^G(X_0)$-action on the normal locus of $\mathbb{X}_G$) and this action commutes with the $G$-action.

The $G$-variety $X$ embeds as a $G$-stable open subset of $\mathbb{X}_G$, and we define the $G$-stable open subset 
\[\mathbb{X}_G' =\Aut^G(X_0) \cdot X \subset \mathbb{X}_G,\]
which is the orbit image of $X$ for the $\Aut^G(X_0)$-action on $\mathbb{X}_G$.
The interest of considering $\mathbb{X}_G'$ rather than $\mathbb{X}_G$ is that $\mathbb{X}_G'$ is a scheme of finite type (unlike $\mathbb{X}_G$ in general), still containing $X$ as a $G$-stable open subset.
Moreover, we have inclusions of group sheaves
\[
\underline{\Aut}^G(X) \subset \Aut^G(X_0) \subset \underline{\Aut}(\mathbb{X}_G').
\] 
Next, for every algebraically closed field $\K \supset \k$, we clearly have
\begin{equation*} 
\underline{\Aut}^G(X)(\K) = \left\{ \varphi \in \Aut^G(X_0)(\K) \subset \underline{\Aut}(\mathbb{X}_G')(\K) \mid \varphi(X_{\K}^c) = X_{\K}^c \right\},
\end{equation*}
where $X^c$ is the $G$-stable closed subset of $\mathbb{X}_G'$ defined as the complement of $X$ in $\mathbb{X}_G'$.
Hence, Proposition~\ref{prop: representability of closed subgroup of Aut} implies that $\underline{\Aut}^G(X)$ is representable by a subgroup scheme of the linear group $\Aut^G(X_0)$. We conclude that $\underline{\Aut}^G(X)$ is represented by a smooth linear $\k$-group, denoted $\Aut^G(X)$.
\end{proof}

\begin{remark}\label{rk: AutG of a spherical variety}
Suppose $X$ is a spherical $G$-variety, i.e.~of complexity zero. Then $\Aut^G(X)$ is a $\k$-group of multiplicative type  (see \cite[\S5.2]{BP87} and \cite[Th.~7.1]{Kno91}).
\end{remark}

In complexity one, examples of computations of $\Aut^{\SL_2}(X)$ in the case where $X$ is an almost homogeneous $\SL_2$-threefold can be found in \cite[\S\S\,5.3--5.4]{MJT24}.

\smallskip

Since complexity-zero varieties are always almost homogeneous, we henceforth restrict our attention to varieties of complexity at least one. 

\section{The case of \texorpdfstring{$T$-torsors}{T-torsors}}\label{sec: torseurs}
In this section, we consider the case of $T$-torsors as a warm-up for what follows. The main result of this section is the representability of the group of $T$-equivariant automorphisms of $T$-torsors (Proposition~\ref{prop: Aut^T pour des torseurs}).

Note that, beginning in \S~\ref{sec: repres of AutYG(X)}, most results concerning $G$-varieties (and not just $T$-varieties) will require the base field to have characteristic $0$. However, this assumption is not needed in the present section. Actually, the results in this section hold for an arbitrary field, not just a perfect one; however, for simplicity, we assume the base field to be perfect.

\smallskip

We start with a simple lemma that actually works for torsors under an arbitrary smooth $\k$-group scheme.

\begin{lemma}\label{lem: lemme relevement morphismes de torseurs}
Let $\k$ be a perfect field. Let $S$ be a $\k$-scheme, $G$ a smooth $\k$-group scheme, and $V \to S$ a $G$-torsor. Denote by $[V]$ the class of $V$ in $\H^1(S,G)$. Let $\varphi \in \Aut(S)$ be an automorphism. Then there exists an automorphism $\psi \in \Aut^G(V)$ making the diagram
\[
\xymatrix{
V \ar[r]^{\psi} \ar[d] & V \ar[d] \\
S \ar[r]^\varphi & S,
}
\]
commute if and only if $\varphi^*([V]) = [V]$ in $\H^1(S,G)$.
\end{lemma}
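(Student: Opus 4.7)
The plan is to reformulate the existence of the lift $\psi$ as an equality of torsor classes via a pullback. Define the pullback torsor $\varphi^*V := S \times_{\varphi, S, p} V$, equipped with its first projection $\varphi^*V \to S$, so that $[\varphi^*V] = \varphi^*([V])$ in $\H^1(S,G)$ by definition.

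The key observation I would use is that giving an equivariant morphism $\psi \colon V \to V$ that fits in the commuting square above is the same datum as giving a morphism of $G$-torsors $\tilde\psi \colon V \to \varphi^*V$ over $S$. Indeed, from $\psi$ one defines
\[
\tilde\psi(v) := (p(v),\, \psi(v)),
\]
which lands in $\varphi^*V$ precisely because $p \circ \psi = \varphi \circ p$, and is $G$-equivariant and over $S$ by construction. Conversely, given such a $\tilde\psi$, composing with the second projection $\mathrm{pr}_2 \colon \varphi^*V \to V$ recovers a $G$-equivariant $\psi$ that covers $\varphi$. Since $\varphi$ is an automorphism, $\mathrm{pr}_2$ is itself an isomorphism (it is the base change of $\varphi$ along $p$); hence $\psi$ is an automorphism of $V$ if and only if $\tilde\psi$ is an isomorphism of $G$-torsors over $S$.

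With this dictionary in place, the proof reduces to the standard fact that two $G$-torsors over $S$ are isomorphic as $G$-torsors over $S$ if and only if they represent the same class in $\H^1(S,G)$. Thus such a $\tilde\psi$ exists if and only if $[V] = [\varphi^*V] = \varphi^*([V])$, which is exactly the desired equivalence. There is no real obstacle in this argument; it is a formal manipulation with pullbacks of torsors, and no hypothesis beyond $G$ being a smooth $\k$-group scheme (so that $\H^1(S,G)$ classifies $G$-torsors) is needed.
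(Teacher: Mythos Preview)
Your proof is correct and follows essentially the same approach as the paper: both arguments hinge on the identification, via the universal property of the fiber product, of $G$-equivariant lifts $\psi$ of $\varphi$ with morphisms of $G$-torsors $V \to \varphi^*V$ over $S$, and then conclude by the bijection between torsor isomorphism classes and $\H^1(S,G)$. If anything, your version is slightly more explicit in verifying that $\psi$ is an automorphism (by noting that $\mathrm{pr}_2$ is the base change of $\varphi$), whereas the paper simply invokes that any morphism of $G$-torsors is an isomorphism.
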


\begin{proof}
Assume that such a diagram exists. Since $\varphi^*([V])$ is the class of the torsor obtained by pullback (i.e., by fiber product), we have a commutative diagram of $G$-torsors:
\[
\xymatrix{
V \ar@/^1pc/[drr]^{\psi} \ar@/_1pc/[ddr] \ar@{-->}[dr] & & \\
& \varphi^*(V) \ar[r] \ar[d] & V \ar[d] \\
& S \ar[r]^\varphi & S,
}
\]
where the dashed arrow is obtained by the universal property of the fiber product. Since every morphism of $G$-torsors is an isomorphism (see \cite[III, Th.~1.4.5.(ii)]{Giraud}), it follows that $\varphi^*([V]) = [\varphi^*(V)] = [V]$, as required.

Conversely, assume that $\varphi^*([V]) = [V]$. This implies the existence of a commutative diagram of $G$-torsors:
\[
\xymatrix{
V \ar[r] \ar[d] & \varphi^*(V) \ar[r] \ar[d] & V \ar[d] \\
S \ar@{=}[r] & S \ar[r]^\varphi & S.
}
\]
The composition of the two top horizontal arrows gives the required morphism $\psi$.
\end{proof}

Now we focus on tori. Let $\k$ be a perfect field. Let $Y$ be a $\k$-scheme, $T$ a $\k$-torus, and $V \to Y$ a $T$-torsor. We define the group sheaf $\underline{\Aut}^T(V)$ as in \eqref{eq: def of the main player}. We also define the subgroup sheaf $\underline{\Aut}^T_Y(V)$ as 
\[\underline{\Aut}^T_Y(V)\colon \ \{\text{smooth }\k\text{-schemes}\}\to\{\text{Groups}\},\quad S\mapsto \Aut_{Y_S}^{T_S}(V_S),\]
where $\Aut_{Y_S}^{T_S}(V_S)$ is the automorphism group of the $T_S$-torsor $V_S \to Y_S$.\\

In the next proposition, we show that the group sheaf $\underline{\Aut}_Y^T(V)$ is representable under mild assumptions on $Y$.

\begin{proposition}\label{prop: Aut^T_Y pour des torseurs}
Let $\k$ be a perfect field. Let $Y$ be a $\k$-scheme, $T$ a $\k$-torus, and $V \to Y$ a $T$-torsor. Then, for any smooth $\k$-scheme $S$, there is a group isomorphism
\[
\underline{\Aut}^T_Y(V)(S) \simeq T(S_Y).
\]
Moreover, if $Y$ is of finite type, geometrically irreducible and generically reduced, then the sheaf $\underline{\Aut}^T_Y(V)$ is representable by a smooth abelian $\k$-group scheme $\Aut_Y^T(V)$, locally of finite type, that fits into an exact sequence
\[1\to T\to \Aut_Y^T(V)\to \Lambda\to 1,\]
where $\Lambda$ is \'etale-locally isomorphic to $\Z^m$ for some $m\geq 0$.
\end{proposition}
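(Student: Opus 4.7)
The plan is to first identify $\underline{\Aut}^T_Y(V)(S)$ with $T(Y_S)$ via a direct torsor computation, then to fit $T(Y_S)$ into an extension of an étale group scheme by $T$ using Rosenlicht's theorem on units, and finally to deduce representability from Lemma~\ref{lem: lemme FILA representabilite des extensions}.

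For the first statement, the identification $\underline{\Aut}^T_Y(V)(S) \simeq T(Y_S)$ is a standard torsor computation. Étale-locally on $Y$ the torsor $V$ is trivial, and on a trivial torsor $T \times U$ over $U$ the $T$-equivariant $U$-automorphisms are precisely the translations by elements of $T(U)$. Since $T$ is abelian, conjugation by the transition cocycle defining $V$ acts trivially on $T$, so these local identifications glue to a canonical isomorphism of étale sheaves $\underline{\Aut}^T_Y(V) \simeq T_Y$ over $Y$. Base-changing to $Y_S$ and taking global sections yield the claimed formula.

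For the second statement, I would first reduce to $T = \Gm$ by Galois descent along a finite Galois extension $\k'/\k$ splitting $T$ into $\Gm^n$. Assuming $T = \Gm$, the sheaf at stake is $\mathcal F \colon S \mapsto \Gm(Y_S) = \mathcal O(Y_S)^*$. Rosenlicht's theorem on units applied to the geometrically integral $\bk$-variety $Y_{\bk,\mathrm{red}}$ says that
\[
\Lambda_0 := \mathcal O(Y_{\bk})^*/\bk^*
\]
is a finitely generated free abelian group of some rank $m \geq 0$ with a continuous $\Gal(\bk/\k)$-action factoring through a finite quotient; this data defines an étale $\k$-group scheme $\Lambda$ that is étale-locally isomorphic to $\Z^m$. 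Choosing lifts of generators of $\Lambda_0$ yields a natural map of sheaves $\mathcal F \to \Lambda$ whose kernel is $\Gm$ and whose surjectivity as sheaves is étale-local on $S$; the latter follows from the relative form of Rosenlicht's theorem, namely that $\mathcal O(Y_S)^*/\mathcal O(S)^* \simeq \mathcal O(Y)^*/\k^*$ for $S$ smooth geometrically integral over $\k$. One thus obtains an exact sequence of sheaves
\[
1 \to \Gm \to \mathcal F \to \Lambda \to 1,
\]
and since $\Gm$ and $\Lambda$ are smooth $\k$-group schemes with $\Gm/\Gm^\circ = 1$ trivially étale-locally finitely generated, Lemma~\ref{lem: lemme FILA representabilite des extensions} yields that $\mathcal F$ is representable by a smooth $\k$-group scheme locally of finite type. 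Returning to general $T$ via Galois descent along $\k'/\k$, and to an arbitrary (not necessarily trivial) torsor $V$ via the first step, one obtains the announced exact sequence $1 \to T \to \Aut_Y^T(V) \to \Lambda \to 1$.

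The main technical obstacle I expect is the ``units of a product'' isomorphism $\mathcal O(Y_S)^*/\mathcal O(S)^* \simeq \mathcal O(Y)^*/\k^*$ needed for the surjectivity of $\mathcal F \to \Lambda$ on the smooth site: this is classical for $S$ smooth geometrically integral over $\k$, and the general case is handled by étale descent. A secondary subtlety is that the generic-reducedness hypothesis must be used to ensure that working with $Y_{\mathrm{red}}$ in place of $Y$ is harmless for the computation of the units. Once these points are settled, the rest of the argument is a direct application of Rosenlicht's theorem and of Lemma~\ref{lem: lemme FILA representabilite des extensions}.
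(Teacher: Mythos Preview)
Your proposal is correct and follows essentially the same route as the paper: identify the sheaf with $S\mapsto T(Y_S)$ via a torsor computation, use Rosenlicht's unit theorem to produce the extension by a lattice $\Lambda$, and deduce representability. Two points of comparison are worth noting. First, for representability of the extension the paper views the sheaf as a $T$-torsor over the \'etale group scheme $\Lambda$ and invokes \cite[III, Th.~4.3.(a)]{MilneEC}, whereas you appeal to Lemma~\ref{lem: lemme FILA representabilite des extensions}; both work. Second, and more substantively, your treatment of the case where $Y$ is only generically reduced is not quite right: passing to $Y_{\mathrm{red}}$ does not suffice, since the relative Rosenlicht isomorphism $\mathcal O(Y_S)^*/\mathcal O(S)^*\simeq \mathcal O(Y)^*/\k^*$ you need requires $Y$ to be \emph{smooth} and geometrically integral, not merely reduced. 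The paper instead restricts to a smooth affine dense open $Y_0\subset Y$ (which exists by generic reducedness and generic smoothness), establishes representability of $\underline{\Aut}^T_{Y_0}(V_0)$ as above, and then realizes $\underline{\Aut}^T_Y(V)$ as a subgroup sheaf of $\Aut^T_{Y_0}(V_0)$ via the dense-open restriction; representability then follows from Lemma~\ref{lem: des foncteurs en groupes aux corps}, since over an algebraically closed field the image contains the identity component $T$ and is therefore open and closed.
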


Note that, in particular, we do not assume $Y$ to be separated in the statement above. This distinction is important for what follows, as geometric quotients of $G$-varieties may be non-separated.

\begin{proof}
By \cite[III, \S1.5]{Giraud}, the sheaf
\[S'/Y \mapsto \Aut^{T_{S'}}_{S'}(V \times_Y S'),\]
is represented, over the fppf site of $Y$ (but not of $\k$), by $T_Y$. In particular, for any smooth $\k$-scheme $S$, we have:
\begin{align*}
\underline{\Aut}^{T}_Y(V)(S) 
&=\Aut_{Y_S}^{T_S}(V_S)=\Aut_{Y_S}^{T_S}(V \times_\k S)\\
&=\Aut_{Y_S}^{T_S}(V \times_Y (Y \times_\k S))=\Aut_{Y_S}^{T_S}(V \times_Y Y_S)\\
&\simeq \Hom_Y(Y_S, T_Y)=\Hom_Y(S_Y, T_Y)\\
&\simeq\Hom_\k(S_Y, T)=T(S_Y),
\end{align*}
where we used the representability over the fppf site of $Y$ when passing from the second to the third line. This proves the first assertion.

\smallskip

In what follows, we may assume by Galois descent that the base field $\k$ is algebraically closed. Assume first that $Y$ is a smooth geometrically integral $\k$-variety. Then, by Rosenlicht's Lemma \cite[App.~A]{FILA}, we know that 
\[ 
T(Y \times_\k S) / T(\k) \simeq T(Y) / T(\k) \times T(S) / T(\k) 
\] 
(see also \cite{Rosenlicht} for the original result by Rosenlicht) for any geometrically integral $\k$-scheme $S$. Since $T$ clearly represents a central subgroup sheaf of $\underline{\Aut}_Y^T(V)$, we deduce that 
\[
\underline{\Aut}_Y^T(V)(S) / T(S) \simeq T(Y) / T(\k),
\]
for any smooth and geometrically connected $\k$-scheme $S$. Again by Rosenlicht's Lemma, we know that $T(Y) / T(\k)$ is a free and finitely generated abelian group. This implies that $\underline{\Aut}^T_Y(V)$ fits into the exact sequence of abelian group sheaves:
\[
1 \to T \to \underline{\Aut}^T_Y(V) \to \Lambda \to 1,
\]
where the sheaf on the right-hand side is isomorphic to the constant sheaf $\Z^m$ for some $m$, and is therefore representable (and \'etale-locally isomorphic to $\Z^m$ for an arbitrary field). Consequently, $\underline{\Aut}^T_Y(V)$ becomes a $T$-torsor over this base, and it is representable by \cite[III, Th.~4.3.(a)]{MilneEC} (use \cite[Th.~11.7.1, Rem.~11.8.3]{GrothendieckBrauerIII} to reduce from the small smooth site to the fppf site, which is the one used by Milne in \cite{MilneEC}). This concludes the proof when $Y$ is a smooth and geometrically integral $\k$-variety.

\smallskip

Remove now the extra assumptions on $Y$. Since $Y$ is geometrically irreducible and generically reduced, there exists a dense open subscheme $Y_0\subseteq Y$ that is geometrically integral. Up to shrinking $Y_0$ if necessary, we may assume that it is affine, and hence separated. Moreover, by generic smoothness, we may further assume that $Y_0$ is smooth. 
Then, since $Y_0$ is dense in $Y$, $T(S_Y)$ is naturally embedded into $T(S_{Y_0})$ for any $S$, so that $\underline{\Aut}_Y^T(V)$ can be seen as a subgroup sheaf of the abelian $\k$-group scheme $\Aut_{Y_0}^T(V_0)$, where $V_0$ is the restriction of the $T$-torsor $V$ to $Y_0\subset Y$. By Lemma \ref{lem: des foncteurs en groupes aux corps}, in order to get the representability of $\underline{\Aut}_Y^T(V)$, it suffices to show that this sheaf commutes with filtered direct limits and that the image corresponds to a closed subgroup of $\Aut_{Y_0}^T(V_0)$ at the level of $\K$-points for every algebraically closed field $\K \supset \k$.

Commutativity with direct limits can be checked as in the proof of Proposition \ref{prop: representability of closed subgroup of Aut}, using the fact that $Y$ and $T$ (and hence $V$) are of finite type over $\k$. Thus, we are reduced to studying the situation over a algebraically closed field $\K$, in which case we may assume that $\k=\K$ up to base change. Then we already know that the neutral connected component of $\Aut_{Y_0}^T(V_0)(\k)$ is $T(\k)$. But this subgroup is clearly contained in the image of $\underline{\Aut}_{Y}^{V}(\k)$, so that the latter is open and closed. This proves the representability of $\underline{\Aut}_{Y}^{V}(\k)$ and, since any subgroup of a free and finitely generated group is of the same type, we also get the desired exact sequence.
\end{proof}

\begin{remark}
The first part of Proposition \ref{prop: Aut^T_Y pour des torseurs} does not generalize easily to more general algebraic groups $G$ for essentially two reasons. First, the sheaf of $G$-equivariant $C$-automor\-phisms, as a sheaf over $C$, is represented by a group-scheme that, in general, is not defined over $\k$, but only over $C$, since it is typically a twist of the original group $G$ (cf.~\cite[III, \S1.5]{Giraud}). Second, and more importantly, the group of $C$-points of an arbitrary group $G$ can be quite wild, since $G$ might contain copies of $\Ga$, and the functor $S \mapsto \Ga(S_C)$ is not representable by a $\k$-scheme in general.
\end{remark}

We consider now the whole group sheaf $\underline{\Aut}^T(V)$ and prove its representability when $Y$ is a smooth curve, that is, a smooth, geometrically integral $\k$-scheme of dimension $1$ (not necessarily separated). In order to do this, we will need another simple lemma.

\begin{lemma}\label{lem: action picard}
Let $\k$ be a perfect field. Let $X$ be a $\k$-variety. Then the group sheaf $\underline{\Aut}(X)$ acts naturally on the Picard sheaf
\[S\mapsto \Pic(X_S)/\Pic(S).\]
In particular, if both group sheaves are representable by $\k$-group schemes, then the stabilizer of a class in $\Pic(X)$ is closed in $\Aut(X)$.
\end{lemma}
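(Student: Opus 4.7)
The plan is to first define the action at the level of sections and verify the usual axioms, then deduce closedness of the stabilizer from the separatedness of $\k$-group schemes.

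For the action, let $S$ be a smooth $\k$-scheme and $\varphi \in \underline{\Aut}(X)(S) = \Aut_S(X_S)$. Pullback of invertible sheaves yields a group automorphism $(\varphi^{-1})^{*}\colon \Pic(X_S)\to \Pic(X_S)$ (taking the inverse ensures a left action, since pullback is contravariant). Because $\varphi$ is an $S$-automorphism, it commutes with the structure morphism $p\colon X_S\to S$, hence for any $M\in\Pic(S)$ we have $(\varphi^{-1})^{*}p^{*}M = p^{*}M$. Consequently the automorphism $(\varphi^{-1})^{*}$ preserves the image of the pullback $p^{*}\colon \Pic(S)\to \Pic(X_S)$ and therefore descends to a group automorphism of the quotient $\Pic(X_S)/\Pic(S)$. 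Functoriality in $S$ is routine: for $f\colon S'\to S$ a morphism of smooth $\k$-schemes, the base change $\varphi_{S'}\in \Aut_{S'}(X_{S'})$ acts via the pullback that is compatible with $f$ in the obvious way. The associativity $(\varphi\circ\psi)\cdot [L]=\varphi\cdot (\psi\cdot [L])$ follows from $((\varphi\circ\psi)^{-1})^{*}=(\psi^{-1}\circ\varphi^{-1})^{*}=(\varphi^{-1})^{*}\circ (\psi^{-1})^{*}$. This defines the desired action of $\underline{\Aut}(X)$ on the Picard sheaf.

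For the second assertion, assume that $\underline{\Aut}(X)$ is represented by a $\k$-group scheme $A$ and that the Picard sheaf is represented by a $\k$-group scheme $P$. The action constructed above is then a morphism of $\k$-schemes $\mu\colon A\times_{\k} P\to P$. A class $[L]\in\Pic(X)$ corresponds to a $\k$-point $x_{L}\colon \Spec(\k)\to P$, and the stabilizer of $[L]$ is the fiber of the morphism $\mu(-, x_{L})\colon A\to P$ over $x_{L}$. Since any $\k$-group scheme is separated over $\k$, the $\k$-point $x_{L}$ is a closed immersion in $P$, and hence its preimage under $\mu(-, x_{L})$ is closed in $A$, as claimed.

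The only non-routine point is the descent of $(\varphi^{-1})^{*}$ to the quotient $\Pic(X_S)/\Pic(S)$, which hinges solely on the fact that $\varphi$ is $S$-linear; once this is settled, the remaining steps are formal.
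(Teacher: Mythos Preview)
Your proof is correct and follows essentially the same approach as the paper's own argument, which is quite terse: the paper simply notes that the pullback action of $\Aut_S(X_S)$ on $\Pic(X_S)$ is natural and that, once both sheaves are representable, the action is given by a morphism of $\k$-schemes, whence stabilizers of closed points are closed. Your write-up is more detailed in two respects: you explicitly check that the pullback action descends to the quotient $\Pic(X_S)/\Pic(S)$ (using that $\varphi$ is an $S$-morphism), and you justify closedness of the stabilizer via separatedness of $\k$-group schemes rather than appealing to ``closed points''; both are welcome clarifications of what the paper leaves implicit.
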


\begin{proof}
The fact that the action is well-defined follows essentially from the definitions of both group sheaves, since for any $\k$-scheme $S$, the group $\Aut_S(X_S)$ acts naturally on $\Pic(X_S)$ by pullback. It follows that, if both group sheaves are representable, the action is algebraic; that is, it is induced by a morphism of $\k$-schemes. In particular, this implies that the stabilizers of closed points are closed subgroup schemes.
\end{proof}

\begin{proposition}\label{prop: Aut^T pour des torseurs}
Let $\k$ be a perfect field. Let $C$ be a smooth $\k$-curve (not necessarily separated), $T$ a $\k$-torus, and $V \to C$ a $T$-torsor. Then the group sheaf $\underline{\Aut}^T(V)$ is representable by a smooth $\k$-group scheme $\Aut^T(V)$ that is locally of finite type. Moreover, there is an exact sequence of $\k$-group schemes:
\[1 \to \Aut^T_C(V) \to \Aut^T(V) \to \Aut(C).\]
\end{proposition}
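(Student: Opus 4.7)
The plan is to apply Lemma~\ref{lem: lemme FILA representabilite des extensions} to the short exact sequence of group sheaves
\[1 \to \underline{\Aut}_C^T(V) \to \underline{\Aut}^T(V) \to K \to 1,\]
where $K$ is the image of the morphism $\underline{\Aut}^T(V) \to \underline{\Aut}(C)$ obtained by descending any $T$-equivariant automorphism of $V$ along the faithfully flat quotient $V \to C = V/T$. Once both the kernel and the image are shown to be representable by smooth $\k$-group schemes, and the component group of the kernel is checked to be \'etale-locally finitely generated abelian, Lemma~\ref{lem: lemme FILA representabilite des extensions} will deliver the representability of $\underline{\Aut}^T(V)$, and the desired exact sequence will follow from the inclusion $K \hookrightarrow \underline{\Aut}(C)$.

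The kernel is handled directly by Proposition~\ref{prop: Aut^T_Y pour des torseurs} applied to $Y = C$, which is of finite type, geometrically irreducible, and (being smooth) generically reduced. This yields a smooth abelian $\k$-group scheme $\Aut_C^T(V)$, locally of finite type, sitting in an exact sequence
\[1 \to T \to \Aut_C^T(V) \to \Lambda \to 1\]
with $\Lambda$ \'etale-locally isomorphic to $\Z^m$. Since $T$ is connected and $\Lambda$ is \'etale-locally discrete, the identity component of $\Aut_C^T(V)$ is $T$ and the component group is $\Lambda$, so the hypothesis of Lemma~\ref{lem: lemme FILA representabilite des extensions} on the kernel is met.

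The bulk of the work is to show that $K$ is representable by a smooth subgroup scheme of $\underline{\Aut}(C)$. First, I would establish representability of $\underline{\Aut}(C)$ itself: when $C$ is separated, its smooth projective compactification $\overline{C}$ has $\underline{\Aut}(\overline{C})$ representable by Matsumura--Oort, and the subgroup sheaf stabilizing the finite boundary $\overline{C}\setminus C$ is representable by Proposition~\ref{prop: representability of closed subgroup of Aut}; in the non-separated case, one reduces to the separated setting by viewing $C$ as obtained from a separated smooth model via a finite \'etale-local gluing datum, whose preservation is again a closed condition handled by the same proposition. Next, Lemma~\ref{lem: lemme relevement morphismes de torseurs} identifies $K$ with the stabilizer in $\underline{\Aut}(C)$ of the class $[V] \in \H^1(C,T)$. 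After passing to a finite Galois extension splitting $T$, so that $T \simeq \Gm^r$ and $\H^1(C,T) \simeq \Pic(C)^r$, Lemma~\ref{lem: action picard} shows that this stabilizer is closed in $\underline{\Aut}(C)$; Galois descent then returns the result to $\k$, and we conclude by invoking Lemma~\ref{lem: lemme FILA representabilite des extensions}. I expect the main obstacle to be the non-separated case of $\underline{\Aut}(C)$, which will require a careful structural description of non-separated smooth curves, together with the verification that the descent map $\underline{\Aut}^T(V) \to \underline{\Aut}(C)$ behaves correctly on $S$-points for an arbitrary smooth $\k$-scheme $S$.
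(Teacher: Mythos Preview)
Your overall architecture matches the paper's proof exactly: the same exact sequence, Proposition~\ref{prop: Aut^T_Y pour des torseurs} for the kernel, Lemma~\ref{lem: lemme relevement morphismes de torseurs} to identify the image with a stabilizer of a Picard class, Lemma~\ref{lem: action picard} for closedness, and Lemma~\ref{lem: lemme FILA representabilite des extensions} to assemble the extension.

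There is one genuine subtlety you gloss over, and it is precisely where the paper's argument diverges from yours. You invoke Lemma~\ref{lem: action picard} directly to conclude that the stabilizer of $[V]$ is closed, but that lemma requires the Picard \emph{sheaf} of $C$ to be representable, which is only classical (via FGA) when $C$ is projective. For an open or non-separated curve this is not available off the shelf. The paper sidesteps this entirely: rather than working at the sheaf level and descending, it applies Lemma~\ref{lem: des foncteurs en groupes aux corps} to reduce the closedness question to $\K$-points for $\K$ algebraically closed (where Lemma~\ref{lem: lemme relevement morphismes de torseurs} applies verbatim, without any sheaf upgrade), and then argues by a dichotomy on the compactification $\overline{C}$: if $C=\overline{C}$, the Picard scheme exists and Lemma~\ref{lem: action picard} applies; if $C\neq\overline{C}$, then either $\Aut(C)$ is finite or $\Pic(C)$ is trivial, and in both cases the stabilizer is trivially closed. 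This case split is the key maneuver you are missing. Your Galois-descent route could in principle be made to work, but it would require both a functorial version of Lemma~\ref{lem: lemme relevement morphismes de torseurs} and an independent argument for representability of the Picard functor of an arbitrary smooth curve; the paper's reduction via Lemma~\ref{lem: des foncteurs en groupes aux corps} is much cheaper.

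As for the representability of $\underline{\Aut}(C)$ in the non-separated case, the paper simply takes this as known and does not comment on it; your concern there is legitimate but is not where the actual difficulty lies.
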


\begin{proof}
Since $V$ is a $T$-torsor, we have $C = V / T$. Consequently, any $T_S$-equivariant automorphism of $V_S$ naturally induces an automorphism of $C_S$ for any $\k$-scheme $S$. This observation provides a natural morphism of group sheaves:
\[
\underline{\Aut}^T(V) \to \Aut(C),
\]
where we omit the underline for $\Aut(C)$, as the latter is representable by a $\k$-group scheme. Since the kernel of this morphism consists precisely of those automorphisms fixing $C$, this observation and Proposition \ref{prop: Aut^T_Y pour des torseurs} establish the existence of an exact sequence of group sheaves:
\[
1 \to \Aut^T_C(V) \to \underline{\Aut}^T(V) \to \Aut(C).
\]
We now aim to prove that the image of the morphism on the right defines a closed subgroup of $\Aut(C)$. By Lemma \ref{lem: des foncteurs en groupes aux corps}, it suffices to show that the corresponding group sheaf commutes with filtered direct limits and that the image corresponds to a closed subgroup at the level of $\K$-points for every algebraically closed field $\K \supset \k$. 

Commutativity with direct limits can be checked as in the proof of Proposition \ref{prop: representability of closed subgroup of Aut}, using the fact that $V$ and $T$ are of finite type over $\k$. This ensures that the image in $\Aut(C)$ also commutes with direct limits, as taking direct limits is an exact functor.

We are thus reduced to studying the situation over a algebraically closed field $\K$. Without loss of generality, assume that $\K = \k$ is algebraically closed. Then $T=\Gm^n$ and hence the class of the $T$-torsor $V \to C$ in $\H^1(C, T)$ corresponds to a choice of $n$ classes in $\Pic(C)$. By Lemma \ref{lem: lemme relevement morphismes de torseurs}, the image of $\Aut^T(V) \to \Aut(C)$ is the intersection of the stabilizers of these $n$ classes.

Let $\overline{C}$ denote a compactification of $C$. Assume first that $C=\overline{C}$. Then we know that its Picard scheme is representable (see the fifth expos\'e in \cite{FGA}). Lemma \ref{lem: action picard} tells us then that each of these stabilizers is closed in $\Aut(C)$, so that the image of $\Aut^T(V)$ is a closed subgroup of $\Aut(C)$. If $C\neq\overline{C}$, then either $\Aut(C)$ is finite or $\Pic(C)$ is trivial. In both cases, we see that the stabilizers must be closed as well.

This proves that the image of the rightmost arrow in
\[
1 \to \Aut^T_C(V) \to \underline{\Aut}^T(V) \to \Aut(C).
\]
is representable by a $\k$-group scheme of finite type. Then Lemma \ref{lem: lemme FILA representabilite des extensions} ensures that $\underline{\Aut}^T(V)$ is representable by a smooth $\k$-group scheme. Since moreover $\Aut^T_C(V)$ is locally of finite type by Proposition \ref{prop: Aut^T_Y pour des torseurs} and $\Aut(C)$ is of finite type, we see that $\Aut^T_C(V)$ is locally of finite type. This concludes the proof.
\end{proof}

\section{Representability of \texorpdfstring{$\underline{\Aut}_Y^G(X)$}{AutYG(X)} in arbitrary complexity}\label{sec: repres of AutYG(X)}
Let $\k$ be a perfect field. Let $G$ be a connected reductive $\k$-group, and let $X$ be a $G$-variety of complexity $c \geq 1$.
Assume furthermore that a general $G$-orbit of $X$ is spherical, i.e., of codimension $c$.

In this section, we introduce an exact sequence of group sheaves in which $\underline{\Aut}^G(X)$ appears (see Lemma \ref{lemma: a useful curve}). This sequence enables us to reduce the problem of studying its representability to two distinct questions. Furthermore, we address the first of these questions under the assumption that the base field $\k$ has characteristic $0$ (see Proposition~\ref{prop: representability of AutYG(X)}).

\begin{lemma}\label{lemma: a useful curve}
Let $\k$ be a perfect field. Let $G$ be a connected reductive $\k$-group, and let $X$ be a $G$-variety of complexity $c \geq 1$.
Assume furthermore that a general $G$-orbit of $X$ is spherical, i.e., of codimension $c$.
Then there exist a unique maximal dense open subset $V \subset X$ that is smooth and $G$-stable, a unique $c$-dimensional normal and geometrically integral $\k$-scheme $Y$ of finite type (possibly non-separated), and a $G$-invariant surjective morphism $\theta\colon V \to Y$, which induces an exact sequence of group sheaves
\begin{equation}\label{eq: exact sequence with Y}
1 \to\underline{\Aut}_Y^G(X) \to \underline{\Aut}^G(X) \to \underline{\Aut}(Y),
\end{equation}
with $\underline{\Aut}_Y^G(X) \subset \underline{\Aut}^G(X)$ the subgroup sheaf defined by
\[
\{\text{\rm smooth }\k\text{\rm -schemes}\} \to \{\text{\rm Groups}\}, \quad S \mapsto \{ \varphi \in \Aut_S^{G_S}(X_S)\ |\ \theta_S \circ \varphi=\theta_S\},
\]
where $\theta_S\colon X_S \dashrightarrow Y_S$ is considered as a rational map when writing $\theta_S \circ \varphi=\theta_S$.
\end{lemma}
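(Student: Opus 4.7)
The plan is to build $(V, Y, \theta)$ via Rosenlicht's rational quotient, then to derive the exact sequence by functoriality of the construction.

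First, I would invoke Rosenlicht's theorem to obtain a $G$-stable dense open $U$ contained in the smooth locus $X^{\sma}$ of $X$ (which is itself $G$-stable since $G$ acts by automorphisms), together with a geometric quotient $\pi \colon U \to Y_0$ onto a normal separated $\k$-variety; since a general $G$-orbit is spherical of codimension $c$, one gets $\dim Y_0 = c$. To reach the maximal $V$, I would then consider the family of all pairs $(V', \theta' \colon V' \to Y')$ where $V' \subseteq X^{\sma}$ is $G$-stable open containing $U$, where $Y'$ is a normal separated $\k$-scheme of finite type of dimension $c$, and where $\theta'$ is a $G$-invariant geometric quotient restricting to $\pi$ on $U$. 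Gluing the targets $Y'$ along their common Rosenlicht datum yields a normal, geometrically integral, finite type (but a priori non-separated) $\k$-scheme $Y$, together with a $G$-invariant surjective morphism $\theta \colon V \to Y$, with $V$ the union of the various $V'$. Uniqueness of $(V, Y, \theta)$ follows from the universal property of Rosenlicht's quotient applied at the generic point.

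For the exact sequence, let $S$ be a smooth $\k$-scheme and $\varphi \in \Aut_S^{G_S}(X_S)$. Since $\varphi$ preserves the smooth locus and is $G_S$-equivariant, the pair $(\varphi^{-1}(V_S), \theta_S \circ \varphi)$ satisfies the same maximality property as $(V_S, \theta_S)$, forcing $\varphi(V_S) = V_S$. The $G_S$-equivariance of $\varphi$, together with the fact that $\theta_S$ is a $G_S$-invariant geometric quotient, then provides a unique automorphism $\overline{\varphi} \in \Aut(Y_S)$ satisfying $\overline{\varphi} \circ \theta_S = \theta_S \circ \varphi$. The assignment $\varphi \mapsto \overline{\varphi}$ defines the required morphism of group sheaves $\underline{\Aut}^G(X) \to \underline{\Aut}(Y)$, and its kernel is, by construction, exactly the subgroup sheaf $\underline{\Aut}_Y^G(X)$.

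The main obstacle will be the gluing step in the construction of $Y$. Concretely, given two enlargements $\theta_i \colon V_i \to Y_i$ ($i = 1, 2$) of $\pi$ as above, one has to verify that the induced birational map $Y_1 \dashrightarrow Y_2$ identifies open subsets of $Y_1$ and $Y_2$ that cover the images of $V_1 \cap V_2$ on either side, so that the gluing of $Y_1$ and $Y_2$ produces a well-defined (though typically non-separated) scheme. The compatibility holds at the level of generic points by the universal property of the Rosenlicht quotient, and should spread out by a standard argument using $G$-stable affine coverings, but the details require care in the non-separated setting.
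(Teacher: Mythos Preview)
Your proposal is correct and follows essentially the same approach as the paper: Rosenlicht's theorem provides local geometric quotients, which are then glued to form the maximal $(V,Y,\theta)$, and the exact sequence follows from the observation that any $G_S$-equivariant automorphism of $X_S$ preserves $V_S$ and hence descends to $Y_S$. The paper's proof is terser than yours---it simply asserts that the quotients over intersections provide gluing data and that $\varphi$ preserves $V_S$---so your explicit maximality argument and your flagging of the gluing compatibility as the delicate point are, if anything, more careful than what the paper writes.
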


\begin{proof}
By a theorem of Rosenlicht (see \cite{Ros63} or \cite[\S2.3]{PV89}), there exists a $G$-stable dense open subset $V_0 \subset X$ that gives rise to a geometric quotient $V_0 \to V_0/G$. Given any two such open subsets, the quotient over their intersection is also geometric, providing gluing data for their respective quotients. Hence, the union $V$ of all such subsets, intersected with the smooth locus of $X$, defines a unique maximal dense open subset $V \subset X$ that is smooth and $G$-stable. Moreover, the quotient variety $Y := V/G$ is a $c$-dimensional normal and geometrically integral $\k$-scheme of finite type (possibly non-separated).

Note that any automorphism $\varphi \in \underline{\Aut}^G(X)(S)=\Aut_S^{G_S}(X_S)$ must preserve $V_S$. Therefore, we obtain morphisms of group sheaves:
\[
\underline{\Aut}^G(X) \hookrightarrow \underline{\Aut}^G(V) \to \underline{\Aut}(Y).
\]
This gives the exact sequence \eqref{eq: exact sequence with Y}, completing the proof.
\end{proof}

\begin{remark}\label{rem: courbe separee}
In the particular case where $c=1$, we may further assume that the curve $Y$ is separated. Indeed, there is a unique separated curve $C$ admitting a surjective morphism from $Y$, which can be constructed as follows. For any affine open subset $U \subset Y$, let $\overline{C}$ denote the (unique) smooth completion of $U$. By uniqueness, the inclusions $U \hookrightarrow \overline{C}$ glue together to define a birational morphism $\psi \colon Y\to \overline{C}$. We then define $C$ as the (set-theoretic) image of $\psi$ in $\overline{C}$.  
\end{remark}

Lemma~\ref{lemma: a useful curve} tells us that, in order to address the representability of $\underline{\Aut}^G(X)$, it suffices to separately prove the representability of its image in $\underline{\Aut}(Y)$ and of the subgroup sheaf $\underline{\Aut}_Y^G(X)$. Lemma~\ref{lem: lemme FILA representabilite des extensions} is tailor-made to ensure that these two results together imply the representability of $\underline{\Aut}^G(X)$.

Geometrically, the open subset $V$ is the closest one can get to an actual ``fibration in spherical varieties'', and the subgroup sheaf $\underline{\Aut}_Y^G(X)$ may be interpreted as the group of automorphisms of this ``fibration'' that extend to $X$. Meanwhile, the image of $\underline{\Aut}^G(X)$ in $\underline{\Aut}(Y)$ corresponds to those automorphisms of the base that preserve the fibration structure.

A basic example is the case of a trivial fibration. Here, when the complexity is $c = 1$, the group sheaf $\underline{\Aut}^G(X)$ and the exact sequence~\eqref{eq: exact sequence with Y} are readily seen to be representable.

\begin{example}
Let $Z$ be a spherical $G$-variety, let $C$ be a smooth projective curve, and let $X:=Z \times C$, which is a $G$-variety of complexity one. Then $\Aut_C^G(X) \simeq \Aut^G(Z)$ is a linear group of multiplicative type, and the group sheaf $\underline{\Aut}^G(X)$ is representable by the algebraic group $\Aut^G(X) \simeq \Aut^G(Z) \times \Aut(C)$. 
\end{example}

We prove now that the representability of $\underline{\Aut}_Y^G(X)$ can be settled when $\k$ is a field of characteristic $0$.

\begin{proposition}\label{prop: representability of AutYG(X)}
Let $\k$ be a field of characteristic $0$. Let $G$ be a connected reductive $\k$-group, and let $X$ be a $G$-variety of complexity $c \geq 1$. 
Assume furthermore that a general $G$-orbit of $X$ is spherical, i.e., of codimension $c$.
Then the group sheaf $\underline{\Aut}_Y^G(X)$ is representable by an abelian $\k$-group scheme $\Aut_Y^G(X)$, locally of finite type, and fitting into the exact sequence
\begin{equation}\label{eq: exact sequence for of AutYG(X)}
    1 \to \D \to \Aut_Y^G(X) \to \Lambda \to 1,
\end{equation}
with $\D$ of multiplicative type satisfying $\D_{\bk} \simeq \Aut^{G_{\bk}}(Z)$, where $Z$ is the closure of a general $G_{\bk}$-orbit of $X_{\bk}$, and $\Lambda$ is \'etale-locally isomorphic to $\mathbb{Z}^n$ for some $n \geq 0$.
\end{proposition}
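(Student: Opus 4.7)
The plan is to generalize the argument of Proposition \ref{prop: Aut^T_Y pour des torseurs} from $T$-torsors to the reductive spherical setting, replacing the torus $T$ throughout by the group $\D := \Aut^G(Z)$, where $Z \subset X_{\bk}$ is the closure of a general $G_{\bk}$-orbit. The two facts that make this generalization work are: (a) by Remark \ref{rk: AutG of a spherical variety}, $\D$ is of multiplicative type, so Rosenlicht's lemma applies to it exactly as it does to a torus; and (b) characteristic zero ensures the spherical theory is available and that a rational action of a multiplicative-type group extends to a regular one on a normal $G$-variety.

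By Galois descent, I first reduce to $\k=\bk$. The next step is to exhibit an injection $\D \hookrightarrow \underline{\Aut}_Y^G(X)$: the natural $\D$-action on $Z$ determines an action on the generic fiber of $\theta\colon V \to Y$, which extends to a regular $G$-equivariant action on $X$ commuting with $G$ and preserving every $G$-orbit in $V$. The main step is then to identify
\[
\underline{\Aut}_Y^G(X)(S) \simeq \D(S_Y)
\]
for every smooth $\k$-scheme $S$. The forward map sends a section $\sigma \in \D(S_Y)$ to the fiberwise $G$-equivariant automorphism it defines on $V_S$, which extends to $X_S$ using the universal-model embedding from the proof of Theorem \ref{th: aut of an almost homogeneous variety} (together with Proposition \ref{prop: representability of closed subgroup of Aut} to verify that the resulting automorphism preserves $X$ inside $\mathbb{X}_G'$). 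Conversely, restriction to the generic fiber sends an element of $\underline{\Aut}_Y^G(X)(S)$ to an element of $\D(\k(Y) \otimes_\k \mathcal{O}(S))$ (since over $\k(Y)$ the generic fiber is a form of $G/H$ whose $G$-automorphism group is $\D$, and forms of an abelian group scheme are controlled by abelian Galois cohomology), and a spreading-out argument combined with the normality of $Y$ shows this section actually lies in $\D(S_Y)$.

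With this identification, Rosenlicht's lemma applied to the multiplicative-type group $\D$ yields, for every smooth connected $\k$-scheme $S$,
\[
\D(S_Y)/\D(S) \simeq \D(Y)/\D(\k),
\]
and the right-hand side is a free finitely generated abelian group $\Lambda$ (the finite multiplicative-type part of $\D$ contributes trivially to the quotient, since $\k$ is algebraically closed and $Y$ is geometrically integral). This produces the short exact sequence of sheaves
\[
1 \to \D \to \underline{\Aut}_Y^G(X) \to \Lambda \to 1,
\]
and Lemma \ref{lem: lemme FILA representabilite des extensions} then delivers representability of $\underline{\Aut}_Y^G(X)$ by an abelian $\k$-group scheme locally of finite type. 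Undoing the reduction to $\k=\bk$, $\Lambda$ becomes étale-locally isomorphic to $\mathbb{Z}^n$ over a general perfect base field of characteristic zero, and $\D$ is a form of the multiplicative-type group $\Aut^{G_{\bk}}(Z)$.

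The main obstacle I expect is the extension statement underlying the identification $\underline{\Aut}_Y^G(X)(S) \simeq \D(S_Y)$: showing both that the $\D$-action originally defined only on the generic fiber extends to a regular action on all of $X_S$, and that conversely every $G$-equivariant $Y$-automorphism arises from a $\D$-valued section on $S_Y$. The characteristic-zero hypothesis is essential precisely here, both via the spherical theory (which makes $\D$ multiplicative and gives a clean automorphism description on the generic fiber) and via the extension principle for rational actions of multiplicative-type groups on normal varieties.
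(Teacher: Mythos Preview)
Your strategy diverges from the paper's and contains a real gap at the heart of the argument: the claimed identification $\underline{\Aut}_Y^G(X)(S)\simeq\D(S_Y)$ is neither proved in the paper nor correctly justified in your sketch.

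The paper does not attempt such an identification. Instead it shrinks $V$ so that every fiber of $V\to Y$ is isomorphic to $G/H$ (using \cite{AB05}), and then invokes \cite{CKPR11,Lan20} to produce an \'etale Galois cover $Y'\to Y$ over which the family trivializes: $V'\simeq G/H\times Y'$. This yields a concrete representable ambient group $\Aut_{Y'}^G(V')\simeq\underline{\Hom}(Y',K)$ with $K=N_G(H)/H$, into which $\underline{\Aut}_Y^G(X)$ embeds. One then only needs to identify the intersection with the constant subgroup $K$ as $\D=\Aut^G(Z)$ (citing \cite{Gan18}), after which the quotient $\Lambda$ is automatically free, being a subgroup of the free group $\Lambda'=K(Y')/K$. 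The paper never asserts that $\Lambda=\D(Y)/\D(\k)$, and its argument does not depend on it.

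The forward direction of your identification is where things break. Given $\sigma\in\D(S_Y)$, you get a fiberwise automorphism of $V_S$, but $X$ does not fiber over $Y$ (only $V$ does), so extending to $X_S$ is genuinely problematic: it would require the composite $\sigma\circ\theta_S\colon V_S\to\D$ to extend across $X_S\setminus V_S$, and since $\D$ is typically positive-dimensional and $\theta$ does not extend to $X$, there is no general mechanism for this. Your appeal to the universal model $\mathbb X_G'$ from Theorem~\ref{th: aut of an almost homogeneous variety} is misplaced: that construction glues $G$-embeddings of a \emph{fixed} homogeneous space $X_0$, so it applies only when $X$ is almost homogeneous, which is precisely the case excluded here. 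Even the constant case (that each element of $\Aut^G(Z)$ really acts on all of $X$ and not just on the single orbit closure $Z$) is already nontrivial and is what the paper's citation of \cite{Gan18} addresses; the varying case you need is strictly harder. Finally, in the backward direction you write that the generic fiber has $G$-automorphism group $\D$; in fact it is a form of $G/H$ with automorphism group a form of $K$, and one must argue via preservation of the general orbit closures $\overline{V_y}\simeq Z$ to land in $\D$ rather than merely in $K$.
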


\begin{remark}
Note that, when $\k$ is algebraically closed, $\Aut_Y^G(X) \simeq \D \times \Lambda$. Indeed, in this case $\Lambda \simeq \Z^n$, and choosing any preimages of a fixed $\Z$-basis of $\Lambda$ yields a splitting.
\end{remark}

\begin{proof}
By Galois descent, we may assume that $\k$ is algebraically closed. 
We use the notation from Lemma \ref{lemma: a useful curve}. 
According to \cite[Th.~3.1]{AB05}, there exists a spherical subgroup $H \subset G$ such that the set of elements in $X$ having a stabilizer conjugate to $H$ is a ($G$-stable) dense open subset of $X$. Replacing $V$ by its intersection with this dense open subset, we may assume that all fibers of the quotient morphism $q \colon V \to Y = V/G$ are $G$-orbits isomorphic to $G/H$.

By \cite[Th.~2.13]{CKPR11} (see also \cite[Th.~3.4]{Lan20}), there exists an \'etale Galois cover $\pi\colon Y' \to Y$ such that the scheme $V'$, defined by the cartesian square
\[
  \xymatrix{
    V' \ar[r] \ar[d]_{q'}  & V \ar[d]^q \\
    Y' \ar[r]^\pi & Y
  },
\]
satisfies $V' \simeq G/H \times Y'$, and $q'$ identifies with the second projection $V' \simeq G/H \times Y' \to Y'$. 
A direct computation shows that
\begin{equation}\label{eq: iso of group sheaves for AutyG(X)}
    \underline{\Aut}_{Y'}^G(V') \simeq \underline{\Hom}(Y', K),
\ \  \text{where} \ K := \Aut^G(G/H) \simeq N_G(H)/H,
\end{equation}
as group sheaves. Recall that, since $G/H$ is spherical, the group $K$ is diagonalizable (see Remark \ref{rk: AutG of a spherical variety}).

On the one hand, arguing as in the proof of Proposition \ref{prop: Aut^T_Y pour des torseurs}, we deduce from \eqref{eq: iso of group sheaves for AutyG(X)} and \cite[Th.~1]{Rosenlicht} that $\underline{\Aut}^G_{Y'}(V')$ fits into an exact sequence of abelian group sheaves:
\[
1 \to K \to \underline{\Aut}^G_{Y'}(V') \to \Lambda' \to 1,
\]
where the sheaf on the right-hand side is étale-locally isomorphic to $\Z^m$ for some $m \geq 0$. In particular, by Lemma \ref{lem: lemme FILA representabilite des extensions}, the group sheaf $\underline{\Aut}^G_{Y'}(V')$ is representable by an abelian $\k$-group scheme $\Aut_{Y'}^G(V')$, locally of finite type, whose identity component is the torus $K^\circ$.

On the other hand, we have inclusions of group sheaves
\[
\underline{\Aut}_Y^G(X) \subset \underline{\Aut}_Y^G(V) \subset \Aut_{Y'}^G(V'),
\]
and the intersection $\underline{\Aut}_Y^G(X) \cap K$ is the closed subgroup $\D:=\Aut^G(Z) \subset K$ that acts on the closure $Z$ of a general $G$-orbit in $X$ (see \cite[Cor.~11.10]{Gan18} for a description of the closed subgroup $\D \subset K$). To sum up, we have the following commutative diagram with exact rows:
\[
\xymatrix{
1 \ar[r] & K \ar[r] &  \Aut^G_{Y'}(V') \ar[r] & \Lambda'\ar[r] & 1 \\
1 \ar[r] & \D \ar[r] \ar@{^{(}->}[u] &  \underline{\Aut}^G_{Y}(X) \ar[r] \ar@{^{(}->}[u] & \Lambda \ar[r] \ar@{^{(}->}[u] & 1
},
\]
where $\Lambda$ is \'etale-locally isomorphic to $\mathbb{Z}^n$ for some $n \geq 0$. In particular, again by Lemma \ref{lem: lemme FILA representabilite des extensions}, the group sheaf $\underline{\Aut}_Y^G(X)$ is representable by an abelian $\k$-group scheme $\Aut_Y^G(X)$, locally of finite type, whose identity component is the torus $\D^\circ$. We thus obtain the exact sequence \eqref{eq: exact sequence for of AutYG(X)}, which concludes the proof.
\end{proof}

In some cases, the second problem resolves itself. For example, this occurs when the scheme $Y$ is a $\k$-variety of general type—or more generally, when the automorphism group of $Y$ is finite, and hence representable as a group sheaf.

\begin{corollary}\label{cor: representability for Y of general type}
In the setting of Proposition~\ref{prop: representability of AutYG(X)}, assume furthermore that $\Aut(Y)$ is finite. Then the group sheaf $\underline{\Aut}^G(X)$ is representable by a $\k$-group scheme $\Aut^G(X)$, locally of finite type, whose identity component is a torus.
\end{corollary}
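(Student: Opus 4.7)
The plan is to apply Lemma~\ref{lem: lemme FILA representabilite des extensions} to the exact sequence of group sheaves
\[
1 \to \underline{\Aut}_Y^G(X) \to \underline{\Aut}^G(X) \to K \to 1,
\]
where $K$ denotes the sheaf-image of $\underline{\Aut}^G(X)$ in $\underline{\Aut}(Y)$, extracted from~\eqref{eq: exact sequence with Y}. Two ingredients are needed: representability of both the kernel and the image, and verification that the component group of the kernel is étale-locally isomorphic to a finitely generated abelian group.

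The kernel is handled directly by Proposition~\ref{prop: representability of AutYG(X)}: it is representable by an abelian $\k$-group scheme $\Aut_Y^G(X)$, locally of finite type, with identity component the torus $\D^\circ$ (since $\D$ is of multiplicative type), and component group sitting in an extension of the étale-locally free group $\Lambda$ by the finite étale component group of $\D$. In particular this component group is étale-locally isomorphic to a finitely generated abelian group, matching the hypothesis of Lemma~\ref{lem: lemme FILA representabilite des extensions}.

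For the image $K$, the first step is to observe that the assumption ``$\Aut(Y)$ is finite'' means, after base change to $\bk$ and Galois descent, that $\underline{\Aut}(Y)$ is representable by a finite étale $\k$-group scheme. Then $K$ is a subgroup sheaf of this finite étale group scheme. I would apply Lemma~\ref{lem: des foncteurs en groupes aux corps} with $G = \underline{\Aut}(Y)$: the subgroup $K(\bk) \subseteq \Aut(Y_{\bk})$ is $\Gal(\bk/\k)$-stable and therefore descends to a finite étale $\k$-subgroup scheme $G_0$ of $\underline{\Aut}(Y)$ satisfying $G_0(\K) = K(\K)$ for every algebraically closed $\K \supset \k$; commutativity of $K$ with filtered direct limits is inherited from that of $\underline{\Aut}^G(X)$, which follows from the standard finite-type argument recalled in the proof of Proposition~\ref{prop: representability of closed subgroup of Aut}.

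With both representability statements in hand, Lemma~\ref{lem: lemme FILA representabilite des extensions} furnishes a smooth $\k$-group scheme $\Aut^G(X)$, locally of finite type, representing $\underline{\Aut}^G(X)$. To identify its identity component, note that $K$ is finite étale, so any morphism from a connected scheme to $K$ factors through the neutral point; hence $\Aut^G(X)^\circ \subseteq \Aut_Y^G(X)$, and combined with the inclusion of the connected torus $\D^\circ = \Aut_Y^G(X)^\circ$ in $\Aut^G(X)^\circ$, this gives $\Aut^G(X)^\circ = \D^\circ$, which is a torus. The only real subtlety is making the representability of $K$ precise, which is the Galois-descent step above.
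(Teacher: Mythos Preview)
Your proof is correct and follows the same route as the paper's one-line argument (``this follows directly from the exact sequence~\eqref{eq: exact sequence with Y} together with Proposition~\ref{prop: representability of AutYG(X)}''), with the details spelled out: you make explicit the passage through Lemma~\ref{lem: lemme FILA representabilite des extensions}, the verification that the image $K$ in the finite group $\Aut(Y)$ is itself representable via Lemma~\ref{lem: des foncteurs en groupes aux corps}, and the identification of the identity component as $\D^\circ$. This is exactly what the paper leaves implicit.
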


\begin{proof}
This follows directly from the exact sequence~\eqref{eq: exact sequence with Y}, together with Proposition~\ref{prop: representability of AutYG(X)}.
\end{proof}

\section{Representability of \texorpdfstring{$\underline{\Aut}^G(X)$}{AutG(X)} in complexity one}\label{sec: rep Aut^G(X)}

In this section, we study the right-hand side of the exact sequence \eqref{eq: exact sequence with Y} in order to prove that the group sheaf $\underline{\Aut}^G(X)$ is representable in many cases when $X$ is a normal complexity-one variety.

In the case of $T$-varieties, we still work over an arbitrary perfect field, but when passing to $G$-varieties (with $G$ a connected reductive group) we will need to assume that the base field $\k$ has characteristic $0$.

\subsection{From $T$-torsors to affine $T$-varieties}\label{sec: T-varietes affines}
Let $T$ be a $\k$-torus, and let $X$ be a normal affine $T$-variety of complexity one. In this context, we have access to the theory of Altmann-Hausen on affine $T$-varieties \cite{AH06}, which was originally developed when $\k$ is algebraically closed of characteristic $0$, but was extended to arbitrary fields by Mart\'inez-N\'u\~nez in \cite{GaryAH}. 

This theory assumes that the $T$-action on $X$ is faithful. This is not a restrictive hypothesis: if this is not the case, and we denote by $K$ the kernel of the $T$-action on $X$, then the quotient $T/K$ is a $\k$-torus that acts faithfully on $X$. Therefore, up to replacing $T$ with $T/K$, we may and will assume that the torus action is faithful throughout this subsection.

Given $X$ and $T$ as above, the theory of Altmann-Hausen provides a diagram (cf.~\cite[Th.~3.1 and 3.4]{AH06} and \cite[\S7.1]{GaryAH})
\[\xymatrix{
\tilde X \ar[r]^<<<<<f \ar[d]_{\tilde\pi} & X=\Spec(\k[\tilde X]) \ar[d]^{\pi} \\
\tilde C \ar[r]^<<<<<<g & \Spec(\k[\tilde C]),
}\]
where $\tilde X$ is a normal $T$-variety, $\tilde C$ is a smooth and geometrically integral curve, $\tilde\pi$ is a good quotient (i.e.~it is surjective, $T$-equivariant, affine and $\tilde\pi^*:\mathcal O_{\tilde C}\to\tilde\pi_*(\mathcal O_{\tilde X})^T$ is an isomorphism), $f$ and $g$ are affinization morphisms, and $f$ is a $T$-equivariant contraction morphism, so in particular proper and birational. More precisely, $f$ is a blow-up along a $T$-stable closed subset of codimension $\geq 2$ in $X$. 

Moreover, the morphism $f$ identifies the open subset $V \subset X$, which contains all orbits of codimension 1 in $X$ (see Lemma \ref{lemma: a useful curve}), with the open subset $\tilde{V} \subset \tilde{X}$, which contains all orbits of codimension 1 in $\tilde{X}$ (see \cite[Th.~10.1]{AH06} for a more precise description of this in characteristic $0$).
This implies that the curve $\tilde{C}$ is, in fact, the same as the curve $C$ from Remark \ref{rem: courbe separee}, and we will identify these two curves from now on.

\smallskip

The main takeaway we will use from all this is the following dichotomy:
\begin{enumerate}[(a)]
    \item Either $C$ is affine, in which case $g$ is simply the identity and $\pi$ defines a natural map $X\to C$, which coincides with the rational map $\theta$ of Lemma \ref{lemma: a useful curve} and Remark \ref{rem: courbe separee}; or\label{item (a): C affine}
    \item $C$ is proper, in which case there is no natural map $X\to C$.\label{item (b): C proper}
\end{enumerate}
The curve $C$ is actually part of the presentation of $X$ as an Altmann-Hausen datum, that is $X=X(\mathfrak D,g)$, where $\mathfrak D$ is a proper polyhedral divisor over the curve $C_L$ for some finite Galois extension $L/\k$ splitting $T$ and $g$ is a Galois semilinear action on $\mathfrak D$ (see \cite[Def.~6.5]{GaryAH}). More precisely, if we denote by $N$ the group of cocharacters of $T_{L}$, then
\[\mathfrak D=\sum_{P\in C} \Delta_P\otimes P,\]
where every $\Delta_P$ is a polyhedron with tail cone $\omega$, which is a fixed cone on the vector space $N_\Q:=N\otimes_\Z\Q$. The set of polyhedra with tail cone $\omega$ forms a semigroup with respect to the Minkowski sum and the neutral element is $\omega$ itself (for more details, see either \cite{AH06} or \cite{GaryAH}). In particular, for the formal sum to make sense there are only finitely many closed points $P\in C$ such that $\Delta_P\neq\omega$, and these form the support of $\mathfrak D$.\\

We recall now the following result on automorphisms of $T$-varieties that also follows from the theory of Altmann-Hausen, see \cite[Cor.~8.9]{AH06} for the result over an algebraically closed field of characteristic $0$ and \cite[Cor.~5.13]{GaryAH} for the case of an arbitrary field.

\begin{proposition}\label{prop: cor AH et Gary automorphismes}
Let $\k$ be a perfect field. Let $T$ be a $\k$-torus split by a finite Galois extension $L/\k$, and let $X=X(\mathfrak D,g)$ be a normal affine $T$-variety of complexity one, where $\mathfrak D$ is a proper polyhedral divisor over $C_L$ for some smooth and geometrically integral $\k$-curve $C$, and $g$ is a Galois semilinear action on $\mathfrak{D}$. Then $\psi\in\Aut(C)(\k)$ lies in the image of $\underline{\Aut}^T(X)(\k)$ if and only if there exists a plurifunction $\mathfrak{f}\in L(N;C)^*$ such that $\psi^*(\mathfrak D)=\mathfrak D+\div(\mathfrak f)$. In particular, the automorphisms fixing $C$ are in bijection with plurifunctions in $\ker(\div)$.
\end{proposition}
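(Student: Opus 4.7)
The plan is to deduce this statement directly from the Altmann--Hausen dictionary between normal affine $T$-varieties of complexity one and proper polyhedral divisors, in the generalized form developed over arbitrary perfect fields in \cite{GaryAH}. Concretely, \cite[Cor.~8.9]{AH06} gives the classification of $T$-equivariant automorphisms of $X(\mathfrak{D})$ when $\k$ is algebraically closed of characteristic zero, and \cite[Cor.~6.13]{GaryAH} upgrades this to $T$-equivariant morphisms in the non-split setting, incorporating the Galois semilinear datum $g$.

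First I would recall that, under this correspondence, a $T$-equivariant automorphism of $X = X(\mathfrak{D}, g)$ is encoded by a pair $(\psi, \mathfrak{f})$, where $\psi \in \Aut(C)(\k)$ is a base automorphism and $\mathfrak{f} \in L(N;C)^*$ is a plurifunction satisfying $\psi^*(\mathfrak{D}) = \mathfrak{D} + \div(\mathfrak{f})$, subject to a compatibility with $g$ that is automatic once the automorphism is required to be $\k$-rational. The map $\underline{\Aut}^T(X)(\k) \to \Aut(C)(\k)$ induced by the good quotient (see Lemma~\ref{lemma: a useful curve}, taking into account Remark~\ref{rem: courbe separee} and the identification of $\tilde C$ with $C$) coincides, under this classification, with the first projection $(\psi, \mathfrak{f}) \mapsto \psi$. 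Hence $\psi$ lies in the image of $\underline{\Aut}^T(X)(\k)$ if and only if some plurifunction $\mathfrak{f}$ with the required divisor equation exists, which is exactly the claimed equivalence.

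For the \emph{in particular} assertion, I would specialize to $\psi = \mathrm{id}_C$: the compatibility relation becomes $\div(\mathfrak{f}) = 0$, so the kernel of $\underline{\Aut}^T(X)(\k) \to \Aut(C)(\k)$ is in bijection with $\ker(\div) \subset L(N;C)^*$. The only mildly delicate point in the whole argument is the Galois descent step, namely verifying that the compatibility condition with $g$ drops out automatically for $\k$-rational pairs $(\psi, \mathfrak{f})$; but this is built into the framework of \cite{GaryAH} and is what makes the cited corollary directly applicable. The main ``obstacle'' is therefore essentially bookkeeping: translating the notation and conventions of this article into those of \cite{AH06} and \cite{GaryAH} so that the cited corollaries apply verbatim.
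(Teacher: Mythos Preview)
Your proposal is correct and matches the paper's treatment: the paper does not prove this proposition but simply cites \cite[Cor.~8.9]{AH06} and \cite[Cor.~6.13]{GaryAH}, exactly as you do. Your additional explanation of how the dictionary translates (automorphisms $\leftrightarrow$ pairs $(\psi,\mathfrak{f})$, and specializing to $\psi=\mathrm{id}_C$ for the final assertion) is accurate and in fact more detailed than what the paper provides.
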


Recall that a plurifuction $\mathfrak f$ is an element of $L(N;C)^*:=N\otimes_\Z L(C)$, which is also nothing but an $L(C)$-point of the torus $T_L$. Indeed, since $T_{L}\simeq\Gm^n$ for some $n$ (and this amounts to fixing an isomorphism $N\simeq\Z^n$), $\mathfrak{f}$ is simply an $n$-tuple of rational functions in $L(C)$. The map $\div$ naturally sends $\mathfrak{f}$ to an $n$-tuple of divisors in $\mathrm{Div}(C_{L})$, which we may interpret as a proper polyhedral divisor by putting $\Delta_P$ as the translate of $\omega$ in $N_\Q$ by the (integral) coordinates of $\div(\mathfrak{f})$ at $P$.

A key observation for what follows is the following. An element in the kernel of $\div$ is an $n$-tuple of rational functions that do not have zeroes nor poles, i.e. they are \emph{global} invertible functions. In other words, $\ker(\div)\simeq (L[C]^*)^n$. And if one recalls that this description comes from a fixed isomorphism $N\simeq\Z^n$ (or $T\simeq\Gm^n$), then we get that $\ker(\div)\simeq T(L[C])=T(C_L)$.\\

With all these preliminaries, we are ready to study the $T$-equivariant automorphisms of affine $T$-varieties. As for torsors in \S~\ref{sec: torseurs}, we start with the subgroup sheaf $\underline{\Aut}_C^T(X)$.

\begin{proposition}\label{prop: affine T-varietes, Aut_C^T}
Let $\k$ be a perfect field. Let $T$ be a $\k$-torus, and let $X$ be a normal affine $T$-variety of complexity one as above. Let $C$ be the $\k$-curve associated with the Altmann–Hausen datum corresponding to $X$--which is also the curve given by Lemma \ref{lemma: a useful curve} and Remark \ref{rem: courbe separee}.

Then, for any smooth $\k$-scheme $S$, there is a group isomorphism
\[\underline{\Aut}^T_C(X)(S) \simeq T(S_C).\]
In particular, the functor $\underline{\Aut}^T_C(X)$ is representable by a smooth abelian $\k$-group scheme $\Aut^T_C(X)$, locally of finite type over $\k$, and it fits into a short exact sequence of $\k$-group schemes
\[1 \to T \to \Aut^T_C(X) \to \Lambda \to 1,\]
where $\Lambda$ is étale-locally isomorphic to $\Z^m$ for some $m \geq 0$.
\end{proposition}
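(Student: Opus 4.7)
The argument will mirror the proof of Proposition~\ref{prop: Aut^T_Y pour des torseurs}, replacing Giraud's theorem on torsors by the Altmann--Hausen description of the $T$-equivariant automorphisms provided by Proposition~\ref{prop: cor AH et Gary automorphismes}. The strategy is to build a natural morphism of group sheaves from $\underline{T(-\times_\k C)}$ to $\underline{\Aut}^T_C(X)$, analyse the source via Rosenlicht, and then upgrade the pointwise Altmann--Hausen bijection into a sheaf-level isomorphism.

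I first construct a morphism of group sheaves
\[\Phi\colon \underline{T(-\times_\k C)} \longrightarrow \underline{\Aut}^T_C(X).\]
For a smooth $\k$-scheme $S$ and a section $f\in T(S_C)$, the composition $V_S\to C_S\xrightarrow{f}T_S$, combined with the $T_S$-action on $V_S$, yields a $T_S$-equivariant $C_S$-automorphism $\varphi_f$ of $V_S$. Since the Altmann--Hausen contraction $\tilde X\to X$ is a blow-up along a $T$-stable closed subscheme of codimension $\geq 2$, the complement $X\setminus V$ has codimension $\geq 2$ in the normal affine variety $X$; the smooth base change $X_S$ remains normal and affine, and Hartogs' extension lets $\varphi_f$ extend uniquely to a $T_S$-equivariant $C_S$-automorphism $\Phi(f)$ of $X_S$.

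Next, I study the source $\underline{T(-_C)}$ exactly as in the proof of Proposition~\ref{prop: Aut^T_Y pour des torseurs}. Rosenlicht's lemma gives, for every smooth geometrically integral $\k$-scheme $S$, a natural short exact sequence
\[1 \to T(S) \to T(S_C) \to T(C)/T(\k) \to 1,\]
in which $T(C)/T(\k)$ is a finitely generated free abelian group. Sheafifying produces the short exact sequence
\[1 \to T \to \underline{T(-_C)} \to \Lambda \to 1\]
with $\Lambda$ \'etale-locally isomorphic to $\Z^m$ for some $m\geq 0$. Lemma~\ref{lem: lemme FILA representabilite des extensions} then gives the representability of $\underline{T(-_C)}$ by a smooth abelian $\k$-group scheme locally of finite type.

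Finally, I verify that $\Phi$ is an isomorphism of sheaves. By Proposition~\ref{prop: cor AH et Gary automorphismes} combined with the identification $\ker(\div)\simeq T(C_L)$ and Galois descent, the map $\Phi(\K)$ is bijective for every algebraically closed field $\K\supset\k$. Both source and target commute with filtered direct limits of $\k$-algebras, as $X$, $T$ and $C$ are of finite type over $\k$. Lemma~\ref{lem: des foncteurs en groupes aux corps}, applied in the spirit of the concluding step of Proposition~\ref{prop: Aut^T_Y pour des torseurs} with $\underline{T(-_C)}$ as the ambient smooth representable group scheme, then shows that $\Phi$ is an isomorphism of group sheaves; this transports the representability and the short exact sequence from $\underline{T(-_C)}$ to $\underline{\Aut}^T_C(X)$. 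The main technical obstacle is matching the explicit $\Phi$ constructed by pointwise multiplication on $V_S$ with the Altmann--Hausen parameterization of automorphisms via plurifunctions in $\ker(\div)$, and using this identification to upgrade the pointwise bijection on $\bar\k$-points into the desired sheaf-level isomorphism.
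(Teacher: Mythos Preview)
Your overall strategy matches the paper's, but there are two genuine gaps.

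The main one is the Hartogs step. Your claim that $X\setminus V$ has codimension $\geq 2$ is false in general: take $X=\A^2_\k$ with $T=\Gm$ acting by $t\cdot(x,y)=(tx,y)$. The locus $V$ of Lemma~\ref{lemma: a useful curve} (the maximal smooth $T$-stable open subset admitting a geometric quotient) is $\{x\neq 0\}$, whose complement is the $y$-axis, a divisor. The fact that the Altmann--Hausen contraction $\tilde X\to X$ is a blow-up in codimension $\geq 2$ only says that $f$ is an isomorphism outside a codimension-$\geq 2$ subset of $X$; it says nothing about the size of $X\setminus V$. The paper avoids this by using the dichotomy preceding Proposition~\ref{prop: cor AH et Gary automorphismes}: in case~(a) ($C$ affine) the Altmann--Hausen map $\pi\colon X\to C$ is everywhere defined, so one builds $\varphi_f$ directly on all of $X_S$ via $x\mapsto f_S(\pi_S(x))\cdot x$, with no extension needed; in case~(b) ($C$ proper) one has $T(S_C)=T(S)$, so $\Phi$ reduces to the inclusion $T\hookrightarrow\underline{\Aut}^T_C(X)$.

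The second gap is in your application of Lemma~\ref{lem: des foncteurs en groupes aux corps}. That lemma requires the sheaf under study to be a \emph{subsheaf} of a representable ambient group. Your $\Phi$ goes from $\underline{T(-_C)}$ into $\underline{\Aut}^T_C(X)$, not the other way, so you cannot take $\underline{T(-_C)}$ as the ambient scheme. The paper resolves this by observing that both $\underline{T(-_C)}$ (via $\Phi$) and $\underline{\Aut}^T_C(X)$ (via restriction of automorphisms) embed into $\Aut^T(V')$, where $V'\subset X$ is the maximal open subscheme with trivial isotropy; this is a $T$-torsor over a smooth curve, so its equivariant automorphism group is representable by Proposition~\ref{prop: Aut^T pour des torseurs}. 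Lemma~\ref{lem: des foncteurs en groupes aux corps} is then applied inside that ambient group scheme.
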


\begin{remark}
    Although the last assertion of this result already follows from Proposition~\ref{prop: representability of AutYG(X)}, the description given here of $\underline{\Aut}^T_C(X)(S)$ as the $S_C$-points of $T$ is more precise. In particular, it enables explicit computations of the cohomology of this group; see \S\ref{sec: real forms}.
\end{remark}

\begin{proof}
Consider the abelian group sheaf $\underline{H}$ defined by the assignment $S \mapsto T(S_C)$, where $S_C := S \times_{\k} C$. This group sheaf is representable by a $\k$-group scheme $H$, which fits into an exact sequence as the one in the statement, as it was shown in Proposition~\ref{prop: Aut^T_Y pour des torseurs}. 
We will construct an injective morphism of group functors
\[
\underline{H} \to \underline{\Aut}^T_C(X)
\]
and prove that $\underline{H}(\k) = T(C) \simeq \Aut^T_C(X)(\k)$ for any algebraically closed field $\k$. Since both are subgroup sheaves of the group \emph{scheme} of automorphisms of the $T$-torsor obtained by restricting to the maximal open subscheme $V'\subset X$ with trivial isotropy (which is a $\k$-group scheme by Proposition \ref{prop: Aut^T pour des torseurs}), Lemma~\ref{lem: des foncteurs en groupes aux corps} will allow us to conclude.

Let $f:C_S\to T$ be a morphism of $\k$-schemes, which naturally yields a morphism of $S$-schemes $f_S\colon C_S\to T_S$. Assume first that we are in case \ref{item (a): C affine}, so that we have a natural map $\pi:X\to C$. Consider then the following automorphism of $X_S$
\[\varphi:=a_S\circ(f_S\circ\pi_S,\mathrm{Id}_{X_S}),\]
where $\pi_S$ denotes the $S$-morphism $X_S\to C_S$ induced by $\pi$ and similarly $a_S$ denotes the $S$-morphism induced by the action $a:T\times X\to X$ of $T$ on $X$. In terms of points, this reads
\[\varphi:X_S\to X_S, \,x\mapsto f_S(\pi_S(x))\cdot x.\]
This $C_S$-automorphism is $T_S$-equivariant (because $T_S$ is commutative), so that it belongs to $\underline{\Aut}^T_C(X)(S)$. One can check then that $f \mapsto \varphi$ defines an injective morphism of group sheaves $\underline{H}\to \Aut^T_C(X)$.

Assume now that we are in case \ref{item (b): C proper}. Then $T(S_{C})=T(S\times_\k C)=T(S)$ since $C$ is proper, so that $\underline{H}=T$ and the map is simply defined by the natural inclusion $T\hookrightarrow \Aut^T_C(X)$.

Since $T$, $C$ and $S$ are locally of finite type, again a standard argument tells us that the first condition of Lemma \ref{lem: des foncteurs en groupes aux corps} is satisfied, so we are only left with proving that $T(C)\simeq\Aut^T_C(X)(\k)$ when $\k$ is any algebraically closed field. But this is precisely the key observation we did after Proposition \ref{prop: cor AH et Gary automorphismes}, which tells us that $\Aut^T_C(X)(\k)$ is exactly $T(C)$.
\end{proof}

We move on now to the full $T$-equivariant automorphism group.

\begin{proposition}\label{prop: affine T-varieties, representability}
Let $\k$ be a perfect field. Let $T$ be a $\k$-torus, and let $X$ be a normal affine $T$-variety of complexity one as above. Let $C$ be the $\k$-curve associated with the Altmann–Hausen datum corresponding to $X$--which is also the curve given by Lemma \ref{lemma: a useful curve} and Remark \ref{rem: courbe separee}.

Then the group sheaf $\underline{\Aut}^T(X)$ is represented by a smooth $\k$-group scheme $\Aut^T(X)$ which is locally of finite type. Moreover, there is an exact sequence of $\k$-group schemes:
\[1\to\Aut^T_C(X) \to \Aut^T(X)\to \Aut(C).\]
\end{proposition}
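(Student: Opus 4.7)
The plan is to follow the same strategy as in the proof of Proposition~\ref{prop: Aut^T pour des torseurs}. By Lemma~\ref{lemma: a useful curve} and Remark~\ref{rem: courbe separee}, together with the identification $\tilde C = C$ discussed at the start of \S\ref{sec: T-varietes affines}, we have an exact sequence of group sheaves
\[
1 \to \underline{\Aut}^T_C(X) \to \underline{\Aut}^T(X) \to \Aut(C).
\]
The kernel is already representable by Proposition~\ref{prop: affine T-varietes, Aut_C^T}, with component group étale-locally isomorphic to $\Z^m$, so the hypotheses of Lemma~\ref{lem: lemme FILA representabilite des extensions} on the kernel side are met. It therefore suffices to prove that the image of $\underline{\Aut}^T(X)$ in $\Aut(C)$ is representable by a closed subgroup scheme; combining this with Lemma~\ref{lem: lemme FILA representabilite des extensions} will then yield the representability of $\underline{\Aut}^T(X)$, with the local-finite-type property following from that of $\Aut^T_C(X)$ and the finite-type nature of $\Aut(C)$.

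By Galois descent, we may assume $\k$ algebraically closed; commutativity of the image sheaf with filtered direct limits is standard, by the argument used in the proof of Proposition~\ref{prop: representability of closed subgroup of Aut}. Lemma~\ref{lem: des foncteurs en groupes aux corps} then reduces the problem to identifying, for every algebraically closed extension $\K/\k$, the image in $\Aut(C)(\K)$ with the $\K$-points of a closed subgroup. By Proposition~\ref{prop: cor AH et Gary automorphismes}, this image consists precisely of those $\psi \in \Aut(C)(\K)$ for which $\psi^*(\mathfrak D) - \mathfrak D = \div(\mathfrak f)$ for some plurifunction $\mathfrak f$. Pairing $\mathfrak D$ with a $\Z$-basis $u_1, \dots, u_r$ of the dual lattice $M$ of $N$ (chosen inside $\omega^\vee$, after passing to a suitable sublattice if necessary) yields $\Q$-divisors $\mathfrak D(u_j) = \sum_P \bigl(\min_{v \in \Delta_P} \langle u_j, v\rangle\bigr) P$ on $C_L$, and the identity $\div(\mathfrak f)(u_j) = \div(\chi^{u_j}(\mathfrak f))$ shows that any element in the image necessarily fixes every class $[\mathfrak D(u_j)] \in \Pic(C_L)_{\Q}$. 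Closedness of each such stabilizer in $\Aut(C)$ then follows as in Proposition~\ref{prop: Aut^T pour des torseurs}: when $C$ is proper (case~\ref{item (b): C proper}), Lemma~\ref{lem: action picard} applies after clearing denominators; when $C$ is affine (case~\ref{item (a): C affine}), either $\Aut(C)$ is already finite (if the smooth completion has positive genus) or $\Pic(C_L)_{\Q}$ is trivial, so closedness is automatic.

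The main subtlety---and where I expect the most care will be required---is the converse inclusion, namely that fixation of the finitely many classes $[\mathfrak D(u_j)]$, together with the combinatorial structure of $\mathfrak D$, actually cuts out the image and not merely an overgroup of it. Given rational-function witnesses $g_{u_j} \in L(C_L)^\times$ with $\div(g_{u_j}) = \psi^*(\mathfrak D(u_j)) - \mathfrak D(u_j)$, the $\Z$-basis property allows one to assemble a unique (up to a constant in $T(\K)$, which lies in $\Aut^T_C(X)$ anyway) plurifunction $\mathfrak f \in N \otimes_\Z L(C_L)^\times$ with $\chi^{u_j}(\mathfrak f) = g_{u_j}$. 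One must then verify that $\psi^*(\mathfrak D) - \mathfrak D = \div(\mathfrak f)$, which may impose an additional closed condition on $\psi$---for instance that it permutes the finite set of ``essential'' support points of $\mathfrak D$, namely those $P$ for which $\Delta_P$ is not a lattice translate of $\omega$---but this is the stabilizer of a closed subscheme of $C$ and hence closed in $\Aut(C)$. Once the image has been realized as the intersection of these closed subgroup schemes, Lemma~\ref{lem: lemme FILA representabilite des extensions} finishes the proof.
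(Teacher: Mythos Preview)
Your overall strategy---reduce to closedness of the image in $\Aut(C)$ via Lemma~\ref{lem: des foncteurs en groupes aux corps}, invoke Proposition~\ref{prop: cor AH et Gary automorphismes}, then apply Lemma~\ref{lem: lemme FILA representabilite des extensions}---matches the paper's. The divergence is in how closedness is established. You attempt to realize the image as an intersection of Picard-class stabilizers of the evaluation divisors $\mathfrak D(u_j)$ together with the setwise stabilizer of the ``essential'' support $F$. The paper instead observes that the image lies in the setwise stabilizer $G_1$ of $F$, passes to the finite-index pointwise stabilizer $G_2\subset G_1$, and then argues by a direct case analysis on the genus of $\overline{C}$: trivially for genus $\geq 2$; for genus $0$, showing that \emph{every} $\psi\in G_2$ lifts because the relevant degree-zero divisors on $\P^1$ are automatically principal; and for genus $1$, reducing to $C=\overline{C}$ elliptic with $F=\varnothing$ and checking explicitly that only finitely many translations $\tau_P$ satisfy the principality condition.

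The gap in your argument is precisely the converse inclusion you flag but do not prove. The conditions you list do not cut out the image: a $\psi\in G_1\setminus G_2$ that swaps two points of $F$ whose polyhedra are not lattice translates of one another may well fix all the classes $[\mathfrak D(u_j)]$ and stabilize $F$ setwise, yet cannot lift, since $\div(\mathfrak f)$ only shifts polyhedra by lattice vectors. Even after refining to $G_2$, your reconstruction of $\mathfrak f$ from the $g_{u_j}$ requires a genuine $\Z$-basis of $M$, not of a finite-index sublattice---otherwise the equivalence ``all $[\mathfrak D(u_j)]$ fixed $\Leftrightarrow$ $\psi^*\mathfrak D-\mathfrak D$ principal'' fails when $\Pic(C)$ has torsion, which is exactly the genus-$1$ situation---and $\omega^\vee$ need not contain such a basis. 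More fundamentally, establishing that your candidate closed subgroup \emph{equals} the image (rather than merely contains it) still amounts to proving the requisite principality, which is what the paper's case analysis does directly. Your framework is salvageable with more care, but as written it defers the actual content of the proof to an unverified ``one must then verify''.
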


\begin{proof}
We argue as in Proposition \ref{prop: Aut^T pour des torseurs}, proving that the image of $\underline{\Aut}^T(X)$ in $\Aut(C)$ is a closed subscheme. In order to do this, we will use again Lemma \ref{lem: des foncteurs en groupes aux corps} and the same standard argument reduces us to proving that the image on $\k$-points is closed when $\k$ is any algebraically closed field.

Consider then the presentation of $X$ as an Altmann-Hausen datum, that is $X=X(\mathfrak D)$, where
\[\mathfrak D=\sum_{P\in C} \Delta_P\otimes P,\]
and every $\Delta_P$ has tail cone $\omega$. Recall that there are only finitely many closed points $P\in C$ such that $\Delta_P\neq\omega$, and these form the support of $\mathfrak D$. Define $F$ as the set of closed points $P\in C$ such that $\Delta_P$ is \emph{not} a translate of $\omega$. This is a subset of the support of $\mathfrak{D}$ and hence it is finite. By Proposition \ref{prop: cor AH et Gary automorphismes}, we see that any automorphism in $\underline{\Aut}^T(X)(\k)$ preserves the locus $F$ since $\div(\mathfrak{f})$ can only translate polyhedra. By Proposition \ref{prop: representability of closed subgroup of Aut}, we know that this condition defines a smooth closed subgroup $G_1$ of $\Aut(C)$.

We claim that the image of $\underline{\Aut}^T(X)(\k)$ in $\Aut(C)$ is a closed subgroup of $G_1$. In particular, this implies that it is closed in $\Aut(C)$, as desired. 
Note now that the subgroup $G_2$ of automorphisms that fix $F$ \emph{pointwise} has finite index in $G_1$ (consider the morphism to the finite symmetric group that permutes points in $F$). Consequently, it suffices to show that the elements $\varphi \in G_2 \subset \Aut(C)$ that lift to $\Aut^T(X)$ form a closed subgroup of $G_2$.

To prove this last claim, we proceed case by case according to the genus of a smooth compactification $\overline{C}$ of $C$. Consider the following cases:
\begin{itemize}
    \item \textbf{Genus $\geq 2$:} In this case $\Aut(C)$ is finite, so there is nothing to prove. 
    \item \textbf{Genus 0:} Let $\varphi\in G_2$ and let $P_1,\ldots,P_n\in C$ be the closed points in the support of $\mathfrak{D}$ that are not in $F$. Then we may write $\Delta_{P_i}=\chi_i+\omega$ for some nontrivial $\chi_i\in N$. Since $\varphi$ preserves $F$ pointwise, we have that $\mathfrak{D}$ and $\varphi^*(\mathfrak{D})$ coincide on each point of $F$. Thus, if we denote $Q_i:=\varphi^{-1}(P_i)$, then
    \begin{equation}\label{eqn: pp-divs}
    \varphi^*(\mathfrak{D})=\mathfrak{D}+\sum_{i=1}^n\left((\chi_i+\omega)\otimes Q_i+(-\chi_i+\omega)\otimes P_i\right).
    \end{equation}
 Recall that by fixing an isomorphism $T \simeq \Gm^n$, we may assume that $N = \Z^n$. Consequently, the sum on the right can be naturally interpreted as an element of $\mathrm{Div}(\overline{C})^n$.
    Now, since the sum of all the cocharacters involved is trivial, each of the divisors on the $n$-tuple is principal (recall that $\overline C=\P^1$) and thus there exists a plurifunction $\mathfrak{f}$ such that $\div(\mathfrak{f})$ equals the term on the right of \eqref{eqn: pp-divs} and hence $\varphi$ lifts to $\Aut^T(X)$ by Proposition \ref{prop: cor AH et Gary automorphismes}.
    \item \textbf{Genus 1:} In this case, if $C\neq\overline C$, then $\Aut(C)$ is finite again, so we may assume that $C=\overline C$. For the same reason, we may assume that $F=\varnothing$. We are dealing then with an elliptic curve $C$, whose automorphism group is isomorphic to $C\rtimes H_0$ with $H_0$ a finite cyclic group. As before, since we are interested in the closedness of the image, we may focus on the neutral connected component $C$ itself. Let $\varphi$, $P_i,Q_i\in C$ and $\chi_i\in N$ be as in the previous case, and assume that $\varphi$ corresponds to a translation in $C$. Then $Q_i=P_i+_{C} P$ for some fixed $P$, where we write $+_{C}$ for the sum in $C$ as an elliptic curve, and equation \eqref{eqn: pp-divs} still holds in this context. It reads
    \[\varphi^*(\mathfrak{D})=\mathfrak{D}+\sum_{i=1}^n\left((\chi_i+\omega)\otimes (P_i+_{C} P)+(-\chi_i+\omega)\otimes P_i\right).\]    
    Again by Proposition \ref{prop: cor AH et Gary automorphismes}, we know that $\varphi$ will lift to $\Aut^T(X)$ if and only if the sum on the right corresponds to $\div(\mathfrak f)$ for some plurifunction $\mathfrak{f}$. Fixing coordinates once again, we may see this sum as an element in $\mathrm{Div}(C)^n$ and we observe that it has the form $\div(\mathfrak f)$ if and only if each coordinate is a principal divisor on $C$, which happens if and only if the sum of its points \emph{as elements of the elliptic curve $C$} is trivial. Given the $P_i$'s and the coefficients (i.e.~the $\chi_i$'s), this can only happen for finitely many $P\in C$, hence for finitely many $\varphi\in\Aut(C)$, defining a closed subgroup of $G_2$. This concludes the proof of the last claim.
\end{itemize}

We have thus proved that the image of $\underline{\Aut}^T(X)$ in $\Aut(C)$ defines a closed subgroup scheme of $\Aut(C)$. Since we also know that $\underline{\Aut}^T_C(X)$ is representable, Lemma \ref{lem: lemme FILA representabilite des extensions} applied to the sequence \eqref{eq: exact sequence with Y} tells us that $\underline{\Aut}^T(X)$ is representable by a smooth $\k$-group scheme, and it also immediately yields the exact sequence from the statement. Since $\Aut^T(V)$ is locally of finite type and $\Aut^T(X)$ is a subgroup scheme, it is also locally of finite type. This concludes the proof.
\end{proof}

\subsection{From affine $T$-varieties to affine $G$-varieties}
From now on, the base field $\k$ is assumed to be of characteristic $0$.

Let $G$ be a connected reductive group, and let $X$ be a normal $G$-variety of complexity one.

\smallskip

Recall that $G$ is said to be \emph{quasi-split} over $\k$ if it contains a Borel subgroup $B$ defined over $\k$. In this case, we denote by $U$ the unipotent radical of $B$, and by $T$ a maximal torus contained in $B$.

\begin{proposition} \label{prop: AutG(X) is a group scheme}
Assume that the normal $G$-variety $X$ of complexity one is affine.
Then the group sheaf $\underline{\Aut}^G(X)$ is represented by a $\k$-group scheme $\Aut^G(X)$ locally of finite type. 
Moreover, if $G$ is quasi-split over $\k$, then there is a closed immersion of $\k$-group schemes $\Aut^G(X) \subset \Aut^T(X \sslash U)$.
\end{proposition}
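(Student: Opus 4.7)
My plan is to handle the quasi-split case directly, which simultaneously yields the representability and the closed immersion into $\Aut^T(X \sslash U)$, and then to deduce the general case by Galois descent.

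\textbf{Quasi-split setup.} Assume $G$ is quasi-split, so $B = TU$ is defined over $\k$. I would first verify that $Y := X \sslash U = \Spec \k[X]^U$ is a normal affine $T$-variety of complexity one over $\k$: normality follows from $\k[X]^U = \k[X] \cap \k(X)^U$ together with the integral closure of $\k[X]$ in $\k(X)$, while the dimension count $\dim Y - \dim T = \dim X - \dim B = c(X) = 1$ gives complexity one (after reducing, if necessary, to a faithful $T$-action by quotienting by the kernel). Proposition~\ref{prop: affine T-varieties, representability} then provides the representability of $\underline{\Aut}^T(Y)$ by a smooth $\k$-group scheme $\Aut^T(Y)$ locally of finite type. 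In characteristic zero the formation of $U$-invariants commutes with flat base change, so restriction to $U$-invariants defines a morphism of group sheaves $\Phi \colon \underline{\Aut}^G(X) \to \Aut^T(Y)$.

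\textbf{Injectivity.} I would check the injectivity of $\Phi$ by first reducing to the injectivity of $\Phi(\K)$ for algebraically closed $\K \supset \k$ (any $\varphi \in \ker \Phi(S)$ on a connected smooth $S$ embeds into $\ker\Phi(\overline{\k(S)})$) and then invoking the $G_\K$-isotypic decomposition $\K[X] = \bigoplus_\lambda M_\lambda \otimes V_\lambda$, where $M_\lambda = \Hom_{G_\K}(V_\lambda, \K[X])$ carries a trivial $G$-action and $V_\lambda$ is irreducible with one-dimensional $U$-invariants $V_\lambda^U$. Any $G_\K$-equivariant automorphism acts on the $\lambda$-isotypic part as $\varphi_\lambda \otimes \mathrm{id}_{V_\lambda}$ for a unique $\varphi_\lambda \in \Aut(M_\lambda)$, and the collection $(\varphi_\lambda)$ is entirely determined by the restriction to $\K[X]^U = \bigoplus_\lambda M_\lambda \otimes V_\lambda^U$.

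\textbf{Closed image.} The main obstacle is to show that the image of $\Phi$ is closed, so that Lemma~\ref{lem: des foncteurs en groupes aux corps} (applied with ambient group $\Aut^T(Y)$) yields the desired representability by a smooth closed subgroup scheme. My plan is to verify this closedness at the level of $\K$-points for every algebraically closed $\K \supset \k$. Each $\psi \in \Aut^T(Y)(\K)$ admits a unique $G_\K$-equivariant $\K$-linear extension $\tilde\psi$ to $\K[X]$ (namely $\psi_\lambda \otimes \mathrm{id}_{V_\lambda}$ on each isotypic piece), and $\psi$ lies in the image of $\Phi(\K)$ precisely when $\tilde\psi$ is multiplicative. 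Choosing a finite-dimensional $G$-stable subspace $W \subset \K[X]$ that generates $\K[X]$ as a $\K$-algebra (available by finite generation of $\K[X]$ and reductivity of $G$), the multiplicativity conditions $\tilde\psi(fg) = \tilde\psi(f)\tilde\psi(g)$ for $f, g$ ranging over a basis of $W$ involve only the $\psi_\lambda$ for the finite set of $G$-weights occurring in $W \otimes W$, and amount to finitely many polynomial identities carving out a closed subscheme of $\Aut^T(Y)$.

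\textbf{Descent.} For arbitrary $G$, I would pass to a finite Galois extension $\k'/\k$ over which $G$ becomes split, apply the quasi-split case to obtain a $\k'$-group scheme $\Aut^{G_{\k'}}(X_{\k'})$ representing $\underline{\Aut}^{G_{\k'}}(X_{\k'})$, and descend to $\k$ via the canonical $\Gal(\k'/\k)$-descent datum inherited from the $\k$-structure of the sheaf $\underline{\Aut}^G(X)$. This produces the desired $\k$-group scheme $\Aut^G(X)$, locally of finite type, representing $\underline{\Aut}^G(X)$.
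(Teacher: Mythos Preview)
Your overall strategy matches the paper's proof: reduce to the quasi-split case by Galois descent, embed $\underline{\Aut}^G(X)$ into $\Aut^T(X\sslash U)$ via restriction to $U$-invariants, verify injectivity using the isotypic decomposition, and then show the image is closed by expressing membership as the condition that the unique $G$-module extension $\tilde\psi$ of $\psi$ be multiplicative. The paper applies Lemma~\ref{lem: des foncteurs en groupes aux corps} in exactly the same way.

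There is, however, a gap in your closedness step. You assert that multiplicativity of $\tilde\psi$ on all of $\K[X]$ is equivalent to checking $\tilde\psi(fg)=\tilde\psi(f)\tilde\psi(g)$ for $f,g$ in a basis of a finite-dimensional $G$-stable generating subspace $W$. This does not follow: although $W$ generates $\K[X]$ as an algebra, $\tilde\psi$ is only a $G$-module map a priori, so knowing it respects products of pairs from $W$ does not let you bootstrap to longer products (you would need $\tilde\psi((w_1 w_2)w_3)=\tilde\psi(w_1 w_2)\tilde\psi(w_3)$, but $w_1 w_2$ need not lie in $W$). Concretely, writing the multiplication as a family of $G$-equivariant maps $\mu_{\lambda,\mu}^\nu\colon M_\lambda\otimes M_\mu\to M_\nu$, the fact that $\psi$ is a ring automorphism of $\K[X]^U$ only controls the ``Cartan'' components $\nu=\lambda+\mu$, whereas full multiplicativity of $\tilde\psi$ requires compatibility with \emph{all} $\nu$ occurring in $V_\lambda\otimes V_\mu$; restricting to weights appearing in $W\otimes W$ does not obviously force the rest.

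The fix is straightforward and is what the paper does: drop the reduction to finitely many conditions. The multiplicativity of $\tilde\psi$ is the conjunction of the conditions $\tilde\psi\circ m_{\lambda,\mu}^\nu = m_{\lambda,\mu}^\nu\circ(\tilde\psi\otimes\tilde\psi)$ over \emph{all} triples $(\lambda,\mu,\nu)$; each such condition, rewritten on highest-weight vectors, is algebraic in $\psi$ because the action of $\Aut^T(Y)$ on $Y$ is given by a morphism of schemes. An arbitrary intersection of closed subsets is closed, and that is all Lemma~\ref{lem: des foncteurs en groupes aux corps} requires. Your related claim that these are ``finitely many polynomial identities'' is likewise unnecessary (and dubious, since the multiplicity spaces $M_\lambda$ need not be finite-dimensional).
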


\begin{proof}
By Galois descent, we may assume that $G$ is quasi-split over $\k$. 
Let $B$ be a Borel subgroup of $G$ defined over $\k$ with unipotent radical $U$, and let $T$ be a maximal torus contained in $B$.
Then the categorical quotient $X \sslash U:=\Spec(\k[X]^U)$ is a normal affine $T$-variety of complexity one (see \cite[Theorem]{HM73}).

Denote by $q\colon X \to X \sslash U$ the quotient morphism. For every smooth $\k$-scheme $S$, there is a canonical group morphism
\[
\Aut_S^{G_S}(X_S) \longrightarrow \Aut_S^{T_S}((X \sslash U)_S), \quad \varphi \mapsto \varphi_U,
\]
where $\varphi_U$ is the $T_S$-equivariant automorphism making the following diagram commute:
\[
\xymatrix{
    X_S \ar[r]^{\varphi} \ar[d]_{q \times \mathrm{Id}_S}  & X_S \ar[d]^{q \times \mathrm{Id}_S} \\
    (X \sslash U)_S \ar[r]_{\varphi_U} & (X \sslash U)_S
}
\]
Observe that $\varphi_U$ uniquely determines $\varphi$, since the only $(\mathcal{O}_S,G_S)$-algebra automorphism of $\k[X] \otimes_\k \mathcal{O}_S$ that restricts to the identity on $\k[X]^U \otimes_\k \mathcal{O}_S$ is the identity. Indeed, a rational $G$-module $V$ is uniquely determined by the subspace $V^U$ (see e.g.~\cite[\S31]{Hum75}, here we use the characteristic $0$ assumption).

We may thus view $\underline{\Aut}^G(X)$ as a subgroup sheaf of $\Aut^T(X \sslash U)$, the latter being representable by a $\k$-group scheme locally of finite type (Proposition \ref{prop: affine T-varieties, representability}). Since $X \sslash U$ is of finite type, a standard argument once again ensures that assumption \ref{item comm with direct limits} from Lemma \ref{lem: des foncteurs en groupes aux corps} is satisfied. Hence, we are reduced to verifying assumption \ref{item equality on alg closed fields}; that is, that $\underline{\Aut}^G(X)(\k) = \Aut^G_\k(X)$ is a closed subgroup of $\Aut^T(X \sslash U)(\k) = \Aut^T_\k(X \sslash U)$ for every algebraically closed field $\k$ of characteristic $0$. We assume that $\k$ is such a field henceforth.

We now express the condition for a given $\psi \in \Aut^T_\k(X \sslash U)$ to equal $\varphi_U$ for some $\varphi \in \Aut^G_\k(X)$. More precisely, since
\[
\Aut^G_\k(X) = \Aut_{(\k, G)\text{-alg}}(\k[X]) \quad \text{and} \quad \Aut^T_\k(X \sslash U) = \Aut_{(\k, T)\text{-alg}}(\k[X]^U),
\]
we will instead express the condition for $\psi^* \in \Aut_{(\k, T)\text{-alg}}(\k[X]^U)$ to equal $\varphi_U^*$ for some $\varphi^* \in \Aut_{(\k, G)\text{-alg}}(\k[X])$.

Denote by
\[
m \colon \k[X] \otimes_\k \k[X]  \to \k[X]
\]
the multiplication map. Let $\Lambda_+$ be the set of dominant weights of $G$, which is the set parametrizing the finite-dimensional simple $G$-modules (up to isomorphism). Consider three dominant weights $\lambda, \mu, \nu \in \Lambda_+$ and fix copies of the simple $G$-modules $V_\lambda$, $V_\mu$, and $V_\nu$ inside $\k[X]$. Then $m$ induces a morphism of $G$-modules
\[
m_{\lambda,\mu}^{\nu} \colon V_\lambda \otimes_\k V_\mu \to V_\nu.
\]
We denote by $\widetilde{\psi^*}\colon \k[X] \to \k[X]$ the unique morphism of $G$-modules induced by $\psi^* \in \Aut_{(\k, T)\text{-alg}}(\k[X]^U)$. Then $\psi^* = \varphi_U^*$ for some $\varphi^* \in \Aut_{(\k, G)\text{-alg}}(\k[X])$ if and only if $\widetilde{\psi^*}$ is a morphism of $G$-algebras, i.e.,
\[
\widetilde{\psi^*} \circ m = m \circ (\widetilde{\psi^*} \otimes \widetilde{\psi^*}),
\]
which is equivalent to the condition
\begin{equation}\label{eq: is a G-algebra morphism}
\forall \lambda, \mu, \nu \in \Lambda_+, \quad
\widetilde{\psi^*} \circ m_{\lambda,\mu}^\nu = m_{\lambda,\mu}^\nu \circ (\widetilde{\psi^*} \otimes \widetilde{\psi^*}).
\end{equation}
Fix highest weight vectors $v_\lambda$, $v_\mu$, and $v_\nu$ in $V_\lambda$, $V_\mu$, and $V_\nu$, respectively. Let $v \in (V_\lambda \otimes V_\mu)_\nu^U$ be a highest weight vector of weight $\nu$ in $V_\lambda \otimes V_\mu$ such that $m(v \otimes 1) = v_\nu \otimes 1$. Then condition~\eqref{eq: is a G-algebra morphism} is equivalent to the requirement that for every such vector
\[
v = \sum_i c_i \, (g_i \cdot v_\lambda) \otimes (h_i \cdot v_\mu), \ \  \text{with}\ c_i \in \k,\ g_i,h_i \in G,
\]
we have
\begin{equation}\label{eq: algebraic condition}
\psi^*(v_\nu) = \sum_i c_i \, (g_i \cdot \psi^*(v_\lambda)) \otimes (h_i \cdot \psi^*(v_\mu)).
\end{equation}
The condition \eqref{eq: algebraic condition} being algebraic (for every triple $(\lambda,\mu,\nu)$ and every $v$ as above), it follows that $\Aut^G_\k(X)$ is a closed subgroup of $\Aut_\k^{T}(X \sslash U)$ and hence the group sheaf $\underline{\Aut}^G(X)$ is represented by a closed subgroup scheme of the $\k$-group scheme $\Aut^T(X \sslash U)$.
\end{proof}

\begin{remark}\label{rem: suite exacte pour aut_C^G(X) pour G-variete affine}
In the setting of Proposition \ref{prop: AutG(X) is a group scheme}, and assuming furthermore that $G$ is quasi-split over $\k$, we have the following commutative diagram with exact rows:
\[
\xymatrix{
1 \ar[r] & \Aut_C^T(X \sslash U) \ar[r] &  \Aut^T(X \sslash U) \ar[r] & \Aut(C) \\
1 \ar[r] & \Aut_C^G(X) \ar[r] \ar@{^{(}->}[u] &  \Aut^G(X) \ar[r] \ar@{^{(}->}[u] & \Aut(C) \ar@{=}[u]
}.
\] 
\end{remark}

Note that the natural injective homomorphism
$\Aut^G(X) \hookrightarrow \Aut^T(X \sslash U)$
need not be an isomorphism, as illustrated in the following example.

\begin{example}\label{ex: G=X=SL2}
Let $G = \SL_2$ act on $X = \SL_2$ by multiplication on the right, i.e., $g \cdot x = xg^{-1}$. Then the quotient morphism $X \to X \sslash U$ is given by
\[
X \to X \sslash U \simeq \A^2, \quad 
\begin{bmatrix}
a & b \\
c & d
\end{bmatrix}
\mapsto 
\begin{bmatrix}
a \\
c
\end{bmatrix}.
\]
On the one hand, we have $\Aut^G(X) \simeq G$, where $G$ acts on $X$ via left multiplication: $g' \cdot x := g'x$.
On the other hand, the diagonal torus $T \subset G$ acts on $X \sslash U \simeq \A^2$ by
\[
t \cdot 
\begin{bmatrix}
a \\
c
\end{bmatrix}
:= 
\begin{bmatrix}
t a \\
t c
\end{bmatrix},
\]
so that $\Aut^T(X \sslash U) \simeq \Aut^T(\A^2) \simeq \GL_2$, which contains $G$ as a closed subgroup.
\end{example}

\subsection{From affine $G$-varieties to quasi-affine $G$-varieties} 
As  before, let $G$ be a connected reductive group, and let $X$ be a normal $G$-variety of complexity one over a field $\k$ of characteristic $0$.

\begin{lemma}\label{lem: AutG(X) representable quasi-affine}
Assume that the normal $G$-variety $X$ of complexity one is quasi-affine.
Then the group sheaf $\underline{\Aut}^G(X)$ is represented by a $\k$-group scheme $\Aut^G(X)$ locally of finite type. 
\end{lemma}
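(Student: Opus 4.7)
The plan is to reduce to the affine case handled by Proposition~\ref{prop: AutG(X) is a group scheme}. Concretely, I would consider the canonical affinization morphism $j\colon X \hookrightarrow X^{\mathrm{aff}} := \Spec(\Gamma(X, \mathcal{O}_X))$, which is an open immersion by the quasi-affine hypothesis and is $G$-equivariant for the induced $G$-action on $X^{\mathrm{aff}}$. The strategy then has three steps, each of which closely mirrors earlier arguments in the paper.

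First, I would show that $X^{\mathrm{aff}}$ is actually a normal affine $G$-variety of finite type and complexity one. By Galois descent, we may assume $\k$ algebraically closed. Normality and complexity-one pass from $X$ to $X^{\mathrm{aff}}$ since the two share a dense open subset; what requires work is finite generation of $\Gamma(X,\mathcal{O}_X)$. In characteristic zero, by Hadziev's theorem, this reduces to finite generation of the $U$-invariants $\Gamma(X,\mathcal{O}_X)^U \simeq \Gamma(X \sslash U, \mathcal{O})$, where $U$ is the unipotent radical of a Borel subgroup of $G$. But $X \sslash U$ is a normal quasi-affine $T$-variety of complexity one, so finite generation of its global sections follows from the polyhedral-divisor description of Altmann--Hausen recalled in~\S\ref{sec: T-varietes affines}.

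Second, I would apply Proposition~\ref{prop: AutG(X) is a group scheme} to the normal affine $G$-variety $X^{\mathrm{aff}}$ to obtain representability of $\underline{\Aut}^G(X^{\mathrm{aff}})$ by a $\k$-group scheme $\Aut^G(X^{\mathrm{aff}})$ locally of finite type.

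Third, I would exhibit $\underline{\Aut}^G(X)$ as the stabilizer of the $G$-stable closed subset $F := X^{\mathrm{aff}} \setminus X$ inside $\underline{\Aut}^G(X^{\mathrm{aff}})$. By the universal property of affinization together with flat base change along smooth $S \to \Spec(\k)$, any $G$-equivariant $S$-automorphism of $X_S$ extends uniquely to one of $X^{\mathrm{aff}}_S$ (via its action on global sections) and necessarily preserves $F_S$; conversely, automorphisms of $X^{\mathrm{aff}}_S$ preserving $F_S$ restrict to automorphisms of $X_S$. Hence, for every algebraically closed field $\K \supset \k$,
\[
\underline{\Aut}^G(X)(\K) = \{\varphi \in \Aut^G(X^{\mathrm{aff}})(\K) \mid \varphi(F_\K) = F_\K\}.
\]
Proposition~\ref{prop: representability of closed subgroup of Aut} then gives that $\underline{\Aut}^G(X)$ is represented by a smooth closed subgroup scheme of $\Aut^G(X^{\mathrm{aff}})$, which remains locally of finite type.

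The main obstacle is Step~1, namely the finite generation of $\Gamma(X, \mathcal{O}_X)$: this is precisely where the complexity-one hypothesis is essential, since Nagata-type counterexamples show that finite generation of global sections may fail for quasi-affine $G$-varieties of higher complexity. The remaining two steps are essentially formal applications of previously established results.
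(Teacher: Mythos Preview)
Your approach is essentially the same as the paper's: pass to the affinization $\widetilde X=\Spec(\Gamma(X,\mathcal O_X))$, apply Proposition~\ref{prop: AutG(X) is a group scheme} to $\widetilde X$, and then realize $\underline{\Aut}^G(X)$ as the stabilizer of the closed complement $F=\widetilde X\setminus X$ via Proposition~\ref{prop: representability of closed subgroup of Aut}. The paper's proof is in fact shorter because it simply asserts that $\widetilde X$ is a normal affine $G$-variety of complexity one and moves on, whereas you isolate finite generation of $\Gamma(X,\mathcal O_X)$ as the substantive point (which it is).

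One caution about your Step~1: your reduction via Hadziev to finite generation of $\Gamma(X,\mathcal O_X)^U$ is fine, but invoking the Altmann--Hausen description from \S\ref{sec: T-varietes affines} for the quasi-affine $T$-variety $X\sslash U$ is circular as written, since that section only treats \emph{affine} $T$-varieties and you would need finite generation of $\Gamma(X\sslash U,\mathcal O)$ precisely to land in that setting. The claim is nonetheless true: finite generation of global sections for normal quasi-affine varieties of complexity at most one under a reductive group in characteristic zero is a theorem of Knop, and that is the cleanest reference to cite here. With this fix, your argument goes through and matches the paper's.
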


\begin{proof}
Consider the affinization morphism $X \to \widetilde{X} := \Spec(\O_X(X))$. 
The variety $\widetilde{X}$ is a normal affine $G$-variety of complexity one, containing an open subset isomorphic to $X$ (see \cite[\href{https: // stacks.math.columbia.edu/tag/01P9}{Tag 01P9}]{stacks-project}). 
Moreover, there is an injective homomorphism 
\[
\Aut^G(X) \to \Aut^G(\widetilde{X}),\ \varphi \mapsto \Spec(\varphi^*),
\]
which implies that
\[
\Aut^G(X) = \{ \varphi \in \Aut^G(\widetilde{X}) \ |\ \varphi(F) = F \},\ \text{where } F = \widetilde{X} \setminus X.
\]
More generally, $\underline{\Aut}^G(X)$ coincides with the subgroup sheaf $\underline{H}$ of $\underline{\Aut}^G(\widetilde{X})$ defined by
\[
\forall\ \k\text{-scheme}\ S,\ \ \underline{H}(S) = \{ \varphi \in \underline{\Aut}^G(\widetilde{X})(S) \ |\ \varphi(F_S) = F_S \}.
\]
Since $\underline{\Aut}^G(\widetilde{X})$ is represented by a $\k$-group scheme locally of finite type (Proposition \ref{prop: AutG(X) is a group scheme}), the result follows from Proposition \ref{prop: representability of closed subgroup of Aut}.
\end{proof}

\begin{remark}\label{rem: suite exacte pour aut_C^G(X) pour G-variete quasi-affine}
Let $X \hookrightarrow \widetilde{X} := \Spec(\mathcal{O}_X(X))$ be the affinization morphism, as in the proof above. Then $X$ identifies with a dense open subset of $\widetilde{X}$ whose complement has codimension at least $2$ in $\widetilde{X}$ (see \cite[\href{https://stacks.math.columbia.edu/tag/0BCR}{Tag~0BCR}]{stacks-project}). As a consequence, the curves $C$ and $\widetilde{C}$ coincide.
In particular, following Remark~\ref{rem: suite exacte pour aut_C^G(X) pour G-variete affine}, we obtain a commutative diagram with exact rows:
\begin{equation}\label{eq: remark quasi-affine}
\xymatrix{
1 \ar[r] & \Aut_C^G(\widetilde X) \ar[r] &  \Aut^G(\widetilde X) \ar[r] & \Aut(C) \\
1 \ar[r] & \Aut_C^G(X) \ar[r] \ar@{^{(}->}[u] &  \Aut^G(X) \ar[r] \ar@{^{(}->}[u] & \Aut(C) \ar@{=}[u]
}.
\end{equation}
\end{remark}

Note that, in general, if $X$ is a quasi-affine $G$-variety and $\widetilde{X} := \Spec(\mathcal{O}_X(X))$ is its affinization, then $\Aut^G(X)$ does not coincide with $\Aut^G(\widetilde{X})$, as illustrated in the following example.

\begin{example}\label{ex: A^2 sans un point}
Let $X=\A_\k^2 \setminus \{0\}$. Then, with the notation as in the proof of Lemma \ref{lem: AutG(X) representable quasi-affine}, we have $\widetilde{X}=\A_\k^2$ and $F=\{0\}$. 
Consider the action of $G=T=\G_{\mathrm{m},\k}$ on $\widetilde{X}$ given by $t \cdot(x,y)=(tx,y)$. Then 
\[\Aut^T(X)=\{ (x,y) \mapsto (\alpha x,\beta y);\ \alpha, \beta \in \k^* \} \simeq \G_{\mathrm{m},\k}^2 \]
is a strict closed subgroup of
\[ 
\Aut^T(\widetilde{X})=\{ (x,y) \mapsto (\alpha x,\beta y+\gamma);\ \alpha,\beta \in \k^*, \ \gamma \in \k \}  \simeq \G_{\mathrm{m},\k}\times (\G_{\mathrm{a},\k} \rtimes \G_{\mathrm{m},\k}).
\]
Note that the curve $C = \tilde{C} = \tilde{X} \sslash T$ is isomorphic to $\mathbb{A}^1_{\k}$ (the quotient morphism being given by the second projection $(x,y) \mapsto y$), and hence $\Aut(C) \simeq \G_{\mathrm{a}, \k} \rtimes \G_{\mathrm{m}, \k}$. On the other hand, by Proposition~\ref{prop: affine T-varietes, Aut_C^T}, we have $\Aut_C^T(\tilde{X})(\k) = T(C) = T = \G_{\mathrm{m}, \k}$, since there are no nonconstant morphisms $\mathbb{A}^1_{\k} \to \G_{\mathrm{m}, \k}$. Thus, in diagram~\eqref{eq: remark quasi-affine}, the upper sequence splits as a direct product, whereas the lower one is not surjective on the right.
\end{example}

\subsection{From quasi-affine $G$-varieties to Mori dream spaces}
As before, we assume that the base field $\k$ is of characteristic $0$.
We refer to \cite{ADHL15} for a comprehensive treatment of Cox rings and their associated topics. In the proof of Proposition \ref{prop: representatbility in the almost general case}, we will use the theory of Cox rings as a foundational tool, treating it as a black box, to show the representablity of $\underline{\Aut}^G(X)$ when $X$ is a \emph{Mori dream space}, i.e.~a normal variety satisfying the three Cox-type conditions \ref{item: C1}-\ref{item: C2}-\ref{item: C3} listed below.
 
\begin{proposition}  \label{prop: representatbility in the almost general case}
Let $G$ be a connected reductive group, and let $X$ be a normal $G$-variety of complexity one satisfying the following properties:  
\begin{enumerate}  
    \item\label{item: C1} $\O_{X_{\bk}}^{\times}(X_{\bk})= \bk^\times$,  
       \item\label{item: C2} the divisor class group $\Cl(X_{\bk})$ of $X_{\bk}$ is finitely generated, and
        \item\label{item: C3} the Cox sheaf $\mathcal{R}_X$ of $X_{\bk}$ is locally of finite type.
\end{enumerate}
Then the group sheaf $\underline{\Aut}^G(X)$ is represented by a $\k$-group scheme, denoted by $\Aut^G(X)$, which is locally of finite type. 
\end{proposition}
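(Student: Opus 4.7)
The plan is to reduce to the quasi-affine case (Lemma~\ref{lem: AutG(X) representable quasi-affine}) by lifting everything to the characteristic space provided by the Cox sheaf, then use the $\k$-group scheme structure thus obtained together with Lemma~\ref{lem: lemme FILA representabilite des extensions} and Lemma~\ref{lem: des foncteurs en groupes aux corps} to package the result into a representation of $\underline{\Aut}^G(X)$.

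First, by Galois descent, I may assume $\k$ is algebraically closed, so that conditions~\ref{item: C1}--\ref{item: C3} apply to $X$ itself. The Cox sheaf $\mathcal R_X$, which is a quasi-coherent $\Cl(X)$-graded $\O_X$-algebra locally of finite type, then yields a characteristic space
\[
p\colon \hat X \;:=\; \Spec_X(\mathcal R_X)\;\longrightarrow\; X,
\]
where $\hat X$ is a normal quasi-affine $\k$-variety carrying an action of the diagonalizable $\k$-group $H_X := \Spec(\k[\Cl(X)])$, and $p$ is a good quotient (see \cite[\S1.6]{ADHL15}). Since $\Cl(X)$ is finitely generated, $H_X$ is of multiplicative type with finite component group.

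Next, I lift the $G$-action to $\hat X$. Because $G$ is connected, its natural action on $\Cl(X)$ is trivial, so $G$ acts on $\mathcal R_X$ by grading-preserving $\O_X$-algebra automorphisms, which integrates to a $G$-action on $\hat X$ commuting with the $H_X$-action. Hence $\hat X$ is a normal quasi-affine $(G\times H_X)$-variety, still of complexity one (a general fibre of $p$ is a dense $H_X$-orbit, so general orbits of $G\times H_X$ have the same codimension as general $G$-orbits in $X$). Lemma~\ref{lem: AutG(X) representable quasi-affine} therefore represents $\underline{\Aut}^{G\times H_X}(\hat X)$ by a $\k$-group scheme locally of finite type.

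The key transfer step is to relate $\underline{\Aut}^G(X)$ to this representable sheaf. By functoriality of the Cox sheaf, any $\varphi \in \underline{\Aut}^G(X)(S)$ over a smooth base $S$ lifts, after faithfully flat descent along $p$, to an $H_X$-pseudo-equivariant $G$-equivariant automorphism $\tilde\varphi$ of $\hat X_S$, that is, one satisfying $\tilde\varphi \circ \sigma_h = \sigma_{\alpha(h)} \circ \tilde\varphi$ for a uniquely determined $\alpha \in \Aut(H_X)(S)$; the lift $\tilde\varphi$ itself is well defined up to the free $H_X$-action. Collecting these lifts yields an exact sequence of group sheaves
\[
1 \longrightarrow \underline{\Aut}^{G\times H_X}(\hat X) \longrightarrow \underline{\mathcal N}^G \longrightarrow \Aut(H_X),
\]
where $\underline{\mathcal N}^G$ denotes the group sheaf of $G$-equivariant $H_X$-pseudo-equivariant automorphisms of $\hat X$. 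Since $H_X$ is of multiplicative type with finitely generated character group, $\Aut(H_X)$ is étale-locally isomorphic to a finitely generated group, so Lemma~\ref{lem: lemme FILA representabilite des extensions} (applied to the image of $\underline{\mathcal N}^G$ in $\Aut(H_X)$, which is representable by Lemma~\ref{lem: des foncteurs en groupes aux corps}) represents $\underline{\mathcal N}^G$ by a $\k$-group scheme locally of finite type. Finally, $\underline{\Aut}^G(X)$ is recovered as the fppf quotient $\underline{\mathcal N}^G / H_X$, which is representable because $H_X$ is affine of multiplicative type acting freely on a $\k$-scheme locally of finite type.

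The main obstacle I expect is the careful bookkeeping of the "pseudo-equivariant lift" step: I need the assignment $\varphi\mapsto(\tilde\varphi,\alpha)$ to be a genuine morphism of sheaves on the small smooth site, which requires using that $p$ is faithfully flat to descend $\tilde\varphi$ uniquely (up to $H_X$) from a suitable smooth cover of $S$, and to verify that the resulting sequence $1\to H_X\to\underline{\mathcal N}^G\to\underline{\Aut}^G(X)\to 1$ is exact as sheaves, not merely on points. Once this is done, combining the representability results above delivers the desired $\k$-group scheme $\Aut^G(X)$.
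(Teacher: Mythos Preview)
Your overall strategy---pass to the characteristic space $\hat X$, apply the quasi-affine result there, build the group of $H_X$-pseudo-equivariant $G$-automorphisms, and then mod out by $H_X$---is exactly the route taken in the paper. However, two technical steps in your plan do not go through as written.

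First, the claim that ``$G$ is connected, so it acts trivially on $\Cl(X)$, hence the $G$-action lifts to $\hat X$'' is too quick. Trivial action on $\Cl(X)$ tells you only that $G$ respects the grading abstractly; it does not by itself produce a compatible $G$-action on the Cox sheaf, because $\mathcal R_X$ involves a choice of divisorial representatives and identifications, and lifting the action amounts to $G$-linearizing all the reflexive sheaves $\O_X(D)$ in a multiplicative way. This is where \cite[Th.~4.2.3.2]{ADHL15} is used, and that result requires the acting group to be a product of a torus and a simply-connected semisimple group. The paper makes this harmless by first replacing $G$ with a central isogenous cover $T\times G'$ of that shape, which does not change $\underline{\Aut}^G(X)$.

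Second, you apply the quasi-affine representability lemma to the $(G\times H_X)$-variety $\hat X$. But that lemma (and everything upstream of it, in particular the affine case and Proposition~\ref{prop: representability of AutYG(X)}) is proved for a \emph{connected} reductive group, while $H_X=\Spec(\k[\Cl(X)])$ is in general disconnected. The paper avoids this by first working with $\hat G:=G\times H_X^\circ$, which is connected reductive, to get $\Aut^{\hat G}(\hat X)$ representable; then it passes to pseudo-equivariance with respect to $H_X^\circ$ (whose automorphism group is discrete $\simeq\GL_n(\Z)$), and only afterwards adjoins the finite part $F$ of $H_X$ via a separate, elementary closed-condition argument. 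Your single-step use of $G\times H_X$ collapses these two distinct steps and, as written, invokes a lemma outside its hypotheses. A related point: to apply Lemma~\ref{lem: lemme FILA representabilite des extensions} to the extension $1\to\Aut^{\hat G}(\hat X)\to\underline{\mathcal N}^G\to\Delta\to 1$ you need $\pi_0(\Aut^{\hat G}(\hat X))$ to be étale-locally a finitely generated abelian group; this is true but not immediate, and the paper obtains it by feeding the structure sequence \eqref{eq: exact sequence with Y} for $\Aut^{\hat G}(\hat X)$ into the argument rather than citing the lemma in one stroke.
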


\begin{proof}
By Galois descent, we may assume that $\k=\bk$ to prove that the group sheaf $\underline{\Aut}^G(X)$ is represented by a $\k$-group scheme which is locally of finite type.

Let us begin with a classical observation: given an arbitrary connected reductive group $G_1$, there always exists a central isogeny $G_2 \to G_1$, where $G_2$ is a direct product of a torus and a simply-connected semisimple group. 
Moreover, if $Z$ is an arbitrary $G_1$-variety, then $Z$ naturally inherits the structure of a $G_2$-variety, and $\underline{\Aut}^{G_2}(Z) = \underline{\Aut}^{G_1}(Z)$ as group sheavess.  
Consequently, it is harmless to assume, for the purposes of this proof, that $G = T \times G'$, where $T$ is a torus and $G'$ is a simply-connected semisimple group.

\smallskip

Under the assumptions \ref{item: C1}-\ref{item: C2}-\ref{item: C3}, the variety $X=X_{\bk}$ can be realized as a good quotient of a diagonalizable group action on a quasi-affine variety, as follows. 
Define the \emph{characteristic space} of $X$, which is a normal quasi-affine variety, as the relative spectrum 
\[ \widehat{X} := \Spec_{\O_X}(\mathcal{R}_X), \]  
and let  
\[ H_X := \Spec(\k[\Cl(X)]), \]  
which is a diagonalizable group. 
The $\Cl(X)$-grading of the sheaf $\mathcal{R}_X$ defines an $H_X$-action on $\widehat{X}$, and the canonical morphism  
\[ p_X\colon \widehat{X} \to \widehat{X} / H_X = X \]  
is a good quotient (see \cite[Construction~1.6.1.3]{ADHL15} for details).   

Moreover, the $G$-action on $X$ lifts to a $G$-action on $\widehat{X}$, which commutes with the $H_X$-action, such that $p_X$ is $G$-equivariant (\cite[Th.~4.2.3.2]{ADHL15}). Note that this lifting relies on the assumption that $G = T \times G'$, where $T$ is a torus and $G'$ is a simply-connected semisimple group. 

Define $\widehat{G}:=H_{X}^{\circ} \times G$, which is again a connected reductive group. Since $p_X\colon \widehat{X} \to X$ is a $H_X$-torsor over the regular locus of $X$ (\cite[Prop.~1.6.1.6]{ADHL15}), the normal quasi-affine $\widehat{G}$-variety $\widehat{X}$ has complexity one. In particular, by Lemma~\ref{lem: AutG(X) representable quasi-affine}, the group sheaf $\underline{\Aut}^{\widehat{G}}(\widehat{X})$ is represented by a $\k$-group scheme locally of finite type, which we denote by $\Aut^{\widehat{G}}(\widehat{X})$. Recall that $\Aut_{\mathrm{gr}}(H_{X}^{\circ}) \simeq \GL_n(\Z)$, with $n=\dim(H_{X}^{\circ})$, is a discrete $\k$-group scheme.

Consider the abstract group defined by
{\small
\[
\Aut^G(\widehat{X}, H_{X}^\circ) := \{ (\varphi, \tilde{\varphi}) \in \Aut^{G}(\widehat{X}) \times \Aut_{\mathrm{gr}}(H_{X}^\circ)\ |\ \forall x \in \widehat{X}, \forall t \in H_{X}^\circ,\ \varphi(t \cdot x) = \tilde{\varphi}(t) \cdot \varphi(x) \},
\]}  
and let $\underline{\Aut}^G(\widehat{X}, H_X^\circ)$ denote the group sheaf defined by:  
\[
\{\text{smooth } \k\text{-schemes}\} \to \{\text{Groups}\},\quad S \mapsto \Aut_S^{G_S}(\widehat{X}_S, (H_X^\circ)_S).
\]
The group sheaf $\underline{\Aut}^{\widehat{G}}(\widehat{X})$ is the kernel of the second projection
\[
\underline{\Aut}^G(\widehat{X}, H_{X}^\circ) \to \Aut_{\mathrm{gr}}(H_{X}^\circ),
\]  
and since this kernel is representable by Lemma \ref{lem: AutG(X) representable quasi-affine}, there is an exact sequence of group sheaves:  
\[
1 \to \Aut^{\widehat{G}}(\widehat{X}) \to \underline{\Aut}^G(\widehat{X}, H_{X}^{\circ}) \to \Delta \to 1,
\]
for some discrete subgroup scheme $\Delta$ of $\Aut_{\mathrm{gr}}(H_{X}^\circ)$.
Using the exact sequence \eqref{eq: exact sequence with Y} from Lemma \ref{lemma: a useful curve} for $\Aut^{\widehat{G}}(\widehat{X})$, and succesively applying \cite[III, Th.~4.3]{MilneEC} to $\Aut(C)$ and Lemma \ref{lem: lemme FILA representabilite des extensions} to $\Aut_C^{\widehat G}(\widehat X)$, we see that $\underline{\Aut}^G(\widehat{X}, H_{X}^\circ)$ is represented by a $\k$-group scheme locally of finite type, which we denote again by $\Aut^G(\widehat{X}, H_{X}^\circ)$.

Next, writing $H_{X} = H_{X}^{\circ} \times F$, where $F$ is a finite abelian group, we have $\Aut_{\mathrm{gr}}(H_{X}) = \Aut_{\mathrm{gr}}(H_{X}^{\circ}) \times \Hom_{\mathrm{gr}}(F,H_X)$. The projection $\Aut_{\mathrm{gr}}(H_{X}) \to \Aut_{\mathrm{gr}}(H_{X}^{\circ})$ induces a morphism of group sheaves
\[
\psi\colon \underline{\Aut}^G(\widehat{X}, H_{X}) \to \Aut^G(\widehat{X}, H_{X}^{\circ})
\]
whose kernel is represented by a subgroup scheme of the finite $\k$-group scheme $\Hom_{\mathrm{gr}}(F,H_X)$, and whose image is represented by the subgroup scheme of $\Aut^G(\widehat{X}, H_{X}^{\circ})$ defined as the following closed subset:
\[
\bigcup_{\tilde{\varphi}_2 \in \Hom_{\mathrm{gr}}(F,H_X)} \left(  \bigcap_{x \in \widehat{X}, f \in F } \left\{ (\varphi,\tilde{\varphi}_1) \in \Aut^G(\widehat{X}, H_{X}^{\circ}) \ \middle|\ \varphi(f \cdot x) = \tilde{\varphi}_2(f) \cdot \varphi(x) \right\} \right).
\]
Hence, since torsors under finite groups are always representable, the group sheaf $\underline{\Aut}^G(\widehat{X}, H_{X})$ is represented by a $\k$-group scheme locally of finite type, that we denote $\Aut^G(\widehat{X}, H_{X})$.

Finally, it follows from \cite[Th.~4.2.4.1]{ADHL15} (or rather its proof, since our setting slightly differs from the one stated there) that there is an exact sequence of group sheaves:  
\[
1 \to H_X \to \Aut^G(\widehat{X}, H_X) \to \underline{\Aut}^G(X) \to 1.
\] 
Since $\Aut^G(\widehat{X}, H_X)$ is a $\k$-group scheme locally of finite type, we deduce that the group sheaf $\underline{\Aut}^G(X)$ is represented by a $\k$-group scheme locally of finite type. This concludes the proof.
\end{proof}

\section{Finiteness of real forms}\label{sec: real forms}
In this section, we will use our description of the equivariant automorphism group of complexity-one $G$-varieties to show that they admit a finite number of real forms (see Theorem \ref{thm: main result on real forms}). 

\smallskip

Let $\k$ be a perfect field. Let $G$ be an algebraic $\k$-group and let $X$ be a $G$-variety. We say that $X'$ is a \emph{$(\k, G)$-form} of the $G$-variety $X$ if $X_{\bk}$ is isomorphic to $X'_{\bk}$ as $G_{\bk}$-varieties, where $\bk$ denotes the algebraic closure of $\k$.
Then there is a natural bijection 
\[
\left\{ \text{$(\k, G)$-forms of $X$ up to $G$-isomorphism} \right\}
\simeq \H^1(\k, \Aut^G(X)),
\]
see \cite[III, \S1]{Ser02} for a vague statement and \cite[III, Th.~2.5.1]{Giraud} for a precise formulation. See also \cite[\S8]{Wed18} and \cite[\S2.3]{MJT24} for a more down-to-earth treatment in the case of $G$-varieties. 
Since we are ultimately interested in counting forms \emph{up to isomorphism}, to prove the finiteness of the number of $(\k, G)$-forms of $X$, it suffices to show that the set $\H^1(\k, \Aut^G(X))$ is finite.

\smallskip

In the case of almost homogeneous varieties, we  obtain a result that holds over any field of type $(F)$, and not just over $\R$.
Recall from \cite[III, \S4.2]{Ser02} that a field $\k$ is called \emph{field of type} $(F)$ if $\k$ is perfect and, for all integer $n \geq 2$, there exist only a finite number of subextensions of $\bk$ which are of degree $n$ over $\k$. For instance, $\R$ is of type $(F)$, as well as $p$-adic fields and finite fields.

\begin{corollary}[of Theorem \ref{th: aut of an almost homogeneous variety}] \label{cor: forms for almost homog varieties}
Assume that the base field $\k$ is of type $(F)$. Let $G$ be a connected reductive $\k$-group and let $X$ be an almost homogeneous $G$-variety. Then $X$ admits a finite number of $(\k, G)$-forms.
\end{corollary}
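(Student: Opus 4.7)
The plan is essentially a one-line deduction from Theorem~\ref{th: aut of an almost homogeneous variety} combined with a classical finiteness theorem for Galois cohomology, so I will spell out the three ingredients and the order in which they are applied.

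First, I would invoke the standard descent correspondence recalled at the beginning of \S\ref{sec: real forms}: the set of $(\k,G)$-forms of $X$, taken up to $G$-isomorphism, is in natural bijection with the Galois cohomology pointed set $\H^1(\k,\Aut^G(X))$. Hence, to prove finiteness of the number of forms, it suffices to prove finiteness of this cohomology set.

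Next, I would apply Theorem~\ref{th: aut of an almost homogeneous variety}: since $X$ is an almost homogeneous $G$-variety over the perfect field $\k$ (fields of type $(F)$ are perfect by definition), the group sheaf $\underline{\Aut}^G(X)$ is representable by a smooth \emph{linear} algebraic $\k$-group $\Aut^G(X)$. This is exactly the hypothesis needed to invoke the classical finiteness theorem of Borel–Serre (see \cite[III, \S4.3, Th.~4]{Ser02}): for a field $\k$ of type $(F)$ and any linear algebraic $\k$-group $H$, the set $\H^1(\k,H)$ is finite. Applying this with $H=\Aut^G(X)$ gives the desired finiteness.

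There is no real obstacle: the entire content has been pushed into Theorem~\ref{th: aut of an almost homogeneous variety}, whose output (linearity of $\Aut^G(X)$) is precisely what is required to invoke Borel–Serre. The only subtlety worth mentioning in the write-up is that one should confirm that $\k$ of type $(F)$ implies $\k$ perfect, so that Theorem~\ref{th: aut of an almost homogeneous variety} applies, which is immediate from the definition recalled just before the statement.
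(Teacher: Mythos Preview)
Your proposal is correct and follows essentially the same route as the paper: reduce to finiteness of $\H^1(\k,\Aut^G(X))$, invoke Theorem~\ref{th: aut of an almost homogeneous variety} to get that $\Aut^G(X)$ is linear, and then apply \cite[III, \S4.3, Th.~4]{Ser02}. The paper omits the remark that type $(F)$ implies perfect, but this is indeed part of the definition and your inclusion of it is harmless.
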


\begin{proof}
As discussed above, it suffices to prove that the set $\H^1(\k, \Aut^{G}(X))$ is finite.  
Since $\Aut^{G}(X)$ is a linear algebraic group (Theorem~\ref{th: aut of an almost homogeneous variety}) and $\k$ is a field of type (F), it follows from \cite[III, \S4.3, Th.~4]{Ser02} that $\H^1(\k, \Aut^{G}(X))$ is finite.  
Therefore, the set of $(\k, G)$-forms of $X$ is finite.
\end{proof}

We now consider the case where the base field is the field of real numbers $\R$. We denote by $\Gamma$ the Galois group $\Gal(\C/\R)$.  In this setting, we have the following well-known result.

\begin{proposition}\label{prop: finiteness of H1R}
Let $\mathcal{G}$ be an $\R$-group scheme locally of finite type. Assume that $\pi_0(\mathcal{G}(\C))=\mathcal{G}(\C)/\mathcal{G}^\circ(\C)$ is either finite or a finitely generated abelian group. Then the set $\H^1(\R,\mathcal{G})$ is finite.
\end{proposition}

\begin{proof}
Assume first that $\mathcal{G}$ is a linear algebraic group. Then the result follows again from \cite[III, \S4.3, Th.~4]{Ser02}, since $\R$ is a field of type (F).

Now assume that $\mathcal{G} = A$ is an abelian variety. Since $\Gamma$ has order 2, by restriction-corestriction we know that $\H^1(\R, A)$ is a $2$-torsion group (this is where we use the fact that the base field is $\R$ and not just any field of type (F)). Considering the cohomology of the exact sequence induced by the 2-multiplication
\[
1 \to A[2] \to A \to A \to 1,
\]
we see that the morphism $\H^1(\R, A[2]) \to \H^1(\R, A)$ is surjective. Since $A[2]$ is finite, the result follows.

Let us consider now the case where $\mathcal{G}$ is a connected algebraic group, then by Chevalley's theorem (see e.g.~\cite[\S1.1]{BSU13}), there exists an exact sequence
\[
1 \to L \to \mathcal{G} \to A \to 1,
\]
where $L$ is linear and $A$ is an abelian variety. Since every twist of a linear group is again linear, we are reduced by \cite[I, \S5.5, Cor.~3]{Ser02} to the cases of linear groups and abelian varieties, which have already been handled.

Assume now that $\mathcal{G}=\Lambda$ is discrete, or equivalently that $\mathcal{G}^\circ$ is trivial. Then, by hypothesis, $\mathcal{G}(\C)$ is finite (in which case the result is straightforward) or a finitely generated abelian group. In the latter case, by the same restriction–corestriction argument as above, we know that $H^1(\R,\Lambda)$ is $2$-torsion. Moreover, it is a quotient of $Z^1(\R,\Lambda)$, which is finitely generated since $\Gamma$ is finite and $\Lambda(\C)$ is finitely generated. Hence $H^1(\R,\Lambda)$ is a finitely generated $2$-torsion group, and therefore finite.

For the general case, consider the exact sequence
\[
1 \to \mathcal{G}^\circ \to \mathcal{G} \to \mathcal{G}/\mathcal{G}^\circ \to 1.
\]
Here the group on the left is a connected algebraic group, while the group on the right is, over $\C$, either finite or a finitely generated abelian group. Since every twist of a connected group remains connected, the claim follows as above  by \cite[I, \S5.5, Cor.~3]{Ser02}.
\end{proof}

From this, we immediately deduce the following.

\begin{theorem}\label{thm: main result on real forms}
Let $G$ be a real connected reductive group, and let $X$ be a $G$-variety of complexity one. 
Assume that the group sheaf $\underline{\Aut}^G(X)$ is representable—for instance, if one of the conditions \emph{(a)}-\emph{(b)}-\emph{(c)} from Theorem~\ref{main th D} holds. Then $X$ admits only finitely many $(\mathbb{R}, G)$-forms.
\end{theorem}

\begin{proof}
Under these hypotheses, the exact sequence~\eqref{eq: exact sequence with Y} takes the form
\begin{equation*}
1 \to \Aut_C^G(X) \to \Aut^G(X) \to K \to 1,
\end{equation*}
where $K$ denotes the subgroup scheme of $\Aut(C)$ consisting of those automorphisms that lift to $\Aut^G(X)$. By Proposition~\ref{prop: representability of AutYG(X)}, the group on the left satisfies the hypotheses of Proposition~\ref{prop: finiteness of H1R}. Hence the set $\H^1(\R,\Aut_C^G(X))$ is finite. This property is preserved under twists, since it only depends on $\C$-points. Moreover, since $K$ is a real algebraic group, it also satisfies the hypotheses of Proposition~\ref{prop: finiteness of H1R}, so the set $\H^1(\R,K)$ is finite as well. Therefore, by \cite[I, \S5.5, Cor.~3]{Ser02}, we conclude that $\H^1(\R,\Aut^G(X))$ is also finite.
\end{proof}

\begin{example}[Example \ref{ex: A^2 sans un point} revisited]\label{ex: calcul explicite du H1}
Consider either $X=\A_\R^2 \setminus \{0\}$ or $\widetilde{X}=\A_\R^2$ with the action of $G=T=\G_{\mathrm{m},\R}$ given by $t \cdot (x,y)=(tx,y)$. Then both varieties admit no other real forms. 
Indeed, we saw in Example~\ref{ex: A^2 sans un point} that
\[
\Aut^T(X)\simeq \G_{\mathrm{m},\R}^2 
\quad \text{and} \quad
\Aut^T(\widetilde{X})\simeq \G_{\mathrm{m},\R}\times (\G_{\mathrm{a},\R} \rtimes \G_{\mathrm{m},\R}).
\]
By Hilbert’s Theorem~90, $\H^1(\R,\G_{\mathrm{m},\R})=0$, which proves the claim for $X$. 
Moreover, this also shows that $\G_{\mathrm{a},\R}$ has no other real forms as an algebraic group, since its automorphism group is $\G_{\mathrm{m},\R}$. Therefore, the well-known triviality of $H^1(\R,\G_{\mathrm{a},\R})$, together with \cite[I, \S5.5, Cor.~3]{Ser02}, yields the triviality of $\H^1(\R,\Aut^T(\widetilde{X}))$, hence the assertion for $\widetilde{X}$.
\end{example}

With an explicit description of the automorphism group, as in the case of torsors or more generally affine $T$-varieties (see \S\ref{sec: torseurs} and \S\ref{sec: T-varietes affines}), one can determine all real forms explicitly. The following example illustrates how to proceed.

\begin{example}
Let $T = \G_{m,\R}$, $C = \G_{m,\R}$, and $X = T \times C = \G_{m,\R}^2$, viewed as the trivial $T$-torsor over $C$. We will show that $X$ admits exactly five pairwise non-isomorphic $(\R,T)$-forms.  

Since every automorphism of $C$ induces a natural $T$-equivariant automorphism of $X$, the exact sequence~\eqref{eq: exact sequence with Y} becomes a semidirect product
\[
\Aut^T(X) \;=\; \Aut^T_C(X) \rtimes \Aut(C).
\]
Moreover, we have the following isomorphism of group schemes
\begin{align*}
&\Aut(C) = \Aut(\G_{m,\R}) \simeq \G_{m,\R} \rtimes (\Z/2\Z), \ \text{ and }\ \\
&\Aut^T_C(X) \simeq T(C) = \mathrm{End}_{\R\textrm{-sch}}(\G_{m,\R}) \simeq \G_{m,\R} \times \Z\ \ \ \text{(using Proposition~\ref{prop: Aut^T_Y pour des torseurs})}.
\end{align*}
The group $\Aut(C)$ acts on $\Aut^T_C(X)$ via its quotient $\Z/2\Z$, by multiplication by $\pm 1$ on the second factor.

This yields split exact sequences of pointed sets (of abelian groups in the case of \eqref{eq: seq 2}):
\begin{gather}
1 \longrightarrow \H^1(\R, \Aut^T_C(X)) \longrightarrow \H^1(\R, \Aut^T(X)) \longrightarrow \H^1(\R, \Aut(C)) \longrightarrow 1, \label{eq: seq 1}\\
1 \longrightarrow \H^1(\R, \G_{m,\R}) \longrightarrow \H^1(\R, \Aut^T_C(X)) \longrightarrow \H^1(\R, \Z) \longrightarrow 1, \label{eq: seq 2}\\
1 \longrightarrow \H^1(\R, \G_{m,\R}) \longrightarrow \H^1(\R, \Aut(C)) \longrightarrow \H^1(\R, \Z/2\Z) \longrightarrow 1. \label{eq: seq 3}
\end{gather}
Recall that in nonabelian cohomology the $1$’s on the left indicate that the maps have trivial kernels, which need not imply that they are injective. However, when the term on the left has either one or two elements, it does imply injectivity. We will use this fact several times below.

Since $\H^1(\R, \G_{m,\R}) = \H^1(\R, \Z) = 0$ and $\H^1(\R, \Z/2\Z) \simeq \Z/2\Z$, one might expect only two $(\R,T)$-forms of $X$. However, in~\eqref{eq: seq 3}, the fiber of the nontrivial class actually has two elements instead of one. This fiber must be studied via twisting (see \cite[I, \S\S5.3, 5.4]{Ser02}). Here, since $\Aut(C)$ is a semi-direct product, we may twist by the image via a section of the nontrivial element in $H^1(\R,\Z/2\Z)$ and recover an exact sequence analogous to~\eqref{eq: seq 3}
\[1 \longrightarrow \H^1(\R, \Sone) \longrightarrow \H^1(\R, \Aut(C')) \longrightarrow \H^1(\R, \Z/2\Z) \longrightarrow 1,\]
where $C'$ is nothing but the $\R$-form $\Sone$ of $\Gm$, and in which $\Aut(C')$ is also a semi-direct product (the section is exactly the same, since we twisted by a cocycle coming from this section), which explains again the 1 on the left. Since $\H^1(\R, \Sone)\simeq \Z/2\Z$ and the kernel of the left arrow is trivial, the arrow must be injective. Hence the fiber of the nontrivial element in $\H^1(\R, \Z/2\Z)$ in $\eqref{eq: seq 3}$ is in bijection with $\H^1(\R, \Sone) $. These two other classes in $H^1(\R,\Aut(C))$ correspond to the other real forms of $\G_{m,\R}$ as a curve, namely $\Sone$ and its unique nontrivial torsor. Thus $\H^1(\R, \Aut(C))$ has three elements, and in~$\eqref{eq: seq 1}$ we have so far analyzed only the fiber above the trivial class, which has only one element, corresponding to $X$.

For the twist $C' = \Sone$ of $\G_{m,\R}$, the corresponding twist of $X$ is $X' = T \times C' = \G_{m,\R} \times \Sone$, viewed as a trivial $\G_{m,\R}$-torsor. In particular, since every automorphism of $C'$ induces a natural $T$-equivariant automorphism of $X$, $\Aut^T(X')$ is also a semi-direct product $\Aut^T_{C'}(X')\rtimes\Aut(C')$. In the same fashion, one can also see that
\[\Aut^T_{C'}(X)=T(C')\simeq\G_{m,\R}\times\Lambda',\]
where $\Lambda' \simeq \Z$ as an abstract group, but with nontrivial $\Gamma$-action by $\pm 1$.
With this, we get split exact sequences analogous to~$\eqref{eq: seq 1}$ and~$\eqref{eq: seq 2}$
\begin{gather*}
1 \longrightarrow \H^1(\R, \Aut^T_{C'}(X')) \longrightarrow \H^1(\R, \Aut^T(X')) \longrightarrow \H^1(\R, \Aut(C')) \longrightarrow 1,\\
1 \to \H^1(\R, \G_{m,\R}) \to \H^1(\R, \Aut^T_{C'}(X')) \to \H^1(\R, \Lambda') \to 1.
\end{gather*}
A cocycle computation shows $\H^1(\R, \Lambda') \simeq \Z/2\Z$, and hence $\H^1(\R, \Aut^T_{C'}(X'))\simeq \Z/2\Z$.
Thus, we may conclude as above that the left arrow in the first sequence is injective, so the corresponding fiber in sequence~$\eqref{eq: seq 1}$ has two elements.

Similarly, for the twist $C''$ of $\G_{m,\R}$ corresponding to the nontrivial $\Sone$-torsor, the variety $X'' = T \times C''$ gives rise to
\begin{gather*}
1 \longrightarrow \H^1(\R, \Aut^T_{C''}(X'')) \longrightarrow \H^1(\R, \Aut^T(X'')) \longrightarrow \H^1(\R, \Aut(C'')) \longrightarrow 1,\\
1 \to \H^1(\R, \G_{m,\R}) \to \H^1(\R, \Aut^T_{C''}(X'')) \to \H^1(\R, \Lambda'') \to 1,
\end{gather*}
with $\Lambda'' \simeq \Z$ carrying the same nontrivial $\Gamma$-action, hence again $\H^1(\R, \Lambda'') \simeq \Z/2\Z$. Thus this fiber also has two elements.

\smallskip

Altogether, $X$ admits exactly five distinct $(\R,T)$-forms. They can be described explicitly by tracing the possible $\Gamma$-actions on $X_{\C} = \G_{\mathrm{m},\C}^2$. For $n \in \Lambda'(\C) = \Lambda''(\C) = \Z$, the class of the cocycle $\sigma \mapsto n$ depends only on the parity of $n$, so we may take $n \in \{0,1\}$. Pulling back through the corresponding sequences gives cocycles in $Z^1(\R, \Aut^T_{C'}(X'))$ and $Z^1(\R, \Aut^T_{C''}(X''))$ represented by the antiregular involutions on $\G_{m,\C}^2$ defined respectively by
\[
(z_1, z_2) \mapsto (\overline{z}_2^n \overline{z}_1,\; \overline{z}_2^{-1}) \qquad  \text{ and } \qquad
(z_1, z_2) \mapsto ((-\overline{z}_2)^n \overline{z}_1,\; -\overline{z}_2^{-1}).
\]
\end{example}

\begin{remark}
As the preceding example shows, computations of real forms can become complicated when the discrete part $\Lambda$ of $\Aut_C^G(X)$ is nontrivial. Fortunately, $\Lambda$ is often trivial. This occurs, for instance, for affine $T$-varieties when the curve $C$ in the Altmann--Hausen description of $X$ is projective, or misses only one point (as in Example~\ref{ex: A^2 sans un point}, where $C=\A^1_\k$). In this case, Proposition~\ref{prop: affine T-varietes, Aut_C^T} gives $\Aut_C^G(X)=T$. Example~\ref{ex: G=X=SL2} also falls into this situation, although there the automorphism group can be computed directly.
\end{remark}

\bibliographystyle{abbrv}
\bibliography{biblio}

\end{document}